\numberwithin{figure}{section}
\theoremstyle{plain}
\newtheorem{thm}{Theorem}[section]
\newtheorem{lem}[thm]{Lemma}
\newtheorem{cor}[thm]{Corollary}
\newtheorem{prop}[thm]{Proposition}
\newtheorem{conj}[thm]{Conjecture}
\theoremstyle{definition}
\newtheorem{exl}[thm]{Example}
\newtheorem{defn}[thm]{Definition}
\DeclareMathOperator{\intt}{int}
\DeclareMathOperator{\eq}{eq}
\newcommand{\N}{\ensuremath{\mathbb{N}}}
\title{The topological sliceness of 3-strand pretzel knots}
\author{Allison N.~Miller}
\address{Department of Mathematics, University of Texas, Austin, TX 78712, USA}
\begin{document}

\begin{abstract}
We give a complete characterization of the topological slice status of odd 3-strand pretzel knots, proving that an odd 3-strand pretzel knot is topologically slice if and only if either it is ribbon or has trivial Alexander polynomial. (By work of \cite{FS85}, a nontrivial odd 3-strand pretzel knot $K$ cannot both be ribbon and have $\Delta_K(t)=1$.) We also show that topologically slice even 3-strand pretzel knots (except perhaps for members of Lecuona's exceptional family of \cite{Lec13}) must be ribbon. 
These results follow from computations of the Casson-Gordon 3-manifold signature invariants associated to the double branched covers of these knots. 
\end{abstract}

\bibliographystyle{alpha}

\maketitle

\section{Introduction}

In the years since Fox first posed the Slice-Ribbon Conjecture (Problem 1.33 on Kirby's list  \cite{Kirby78}), its validity has been established for several families of knots. The usual strategy is to give an explicit list of ribbon knots in the family and then to provide an obstruction to the smooth sliceness of all others in the family. 
An early example of this is the following classification of the smoothly slice rational knots.
\begin{thm}[\cite{Lisca07}]
A rational knot is smoothly slice iff it is ribbon iff it is in $\mathcal{R}$. \end{thm}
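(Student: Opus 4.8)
The plan is to translate smooth sliceness into a statement about the double branched cover and then into a lattice-embedding problem resolved by Donaldson's diagonalization theorem. Write $K = K(p,q)$ for the rational knot associated to $p/q$, and recall that its double branched cover $\Sigma_2(K)$ is the lens space $L(p,q)$. I would prove the theorem as a cycle of implications: (in $\mathcal{R}$) $\Rightarrow$ (ribbon) $\Rightarrow$ (smoothly slice) $\Rightarrow$ (in $\mathcal{R}$).

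The first two implications are the easy direction. For (in $\mathcal{R}$) $\Rightarrow$ (ribbon), the set $\mathcal{R}$ is specified by a uniform pattern on the continued fraction expansion of $p/q$, and for each such knot I would write down an explicit symmetric band presentation exhibiting a single ribbon move; this is a finite bookkeeping exercise once the pattern is fixed. That (ribbon) $\Rightarrow$ (smoothly slice) is immediate, since a ribbon knot bounds an immersed disk in $S^3$ with only ribbon singularities, hence a smoothly embedded disk in $B^4$.

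The content lies in (smoothly slice) $\Rightarrow$ (in $\mathcal{R}$). If $D \subset B^4$ is a smooth slice disk for $K$, then the double cover $W$ of $B^4$ branched over $D$ is a smooth rational homology $4$-ball with $\partial W = \Sigma_2(K) = L(p,q)$. On the other side, $L(p,q)$ bounds the canonical negative-definite linear plumbing $X(p,q)$ determined by the continued fraction $p/q = [a_1,\dots,a_n]^-$ with all $a_i \ge 2$; its intersection form is the tridiagonal linear lattice $\Lambda(p,q)$, of rank $n$ and determinant $\pm p$. Gluing along the common boundary yields a closed, smooth, negative-definite $4$-manifold $Z = X(p,q) \cup_{L(p,q)} \overline{W}$. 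Because $W$ is a rational homology ball it contributes nothing to $b_2$, so $b_2(Z) = n$ and the intersection form of $Z$ restricts to $\Lambda(p,q)$ on the image of $H_2(X(p,q); \Z)$. Donaldson's theorem forces the form of $Z$ to be the standard diagonal form $\langle -1 \rangle^n$, and I thereby obtain a full-rank isometric embedding $\Lambda(p,q) \hookrightarrow \Z^n$ into the negative diagonal lattice. (Applying the same argument to the mirror of $K$ gives the analogous embedding for $-L(p,q)$, an extra constraint one can also exploit.)

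The main obstacle, and essentially the whole theorem, is the combinatorial classification of which $p/q$ admit such a full-rank embedding of $\Lambda(p,q)$ into the diagonal lattice. My plan would be an induction on $n$: a basis of $\Lambda(p,q)$ consists of vectors $v_1,\dots,v_n$ with $v_i \cdot v_i = -a_i$, consecutive products $v_i \cdot v_{i+1} = 1$, and all other products zero, and I would analyze the finitely many ways the short vectors $v_i$ (especially those with $a_i = 2$, which must take the form $\pm e_j \pm e_k$) can be expressed in the standard basis $e_1,\dots,e_n$. A careful case analysis on how these vectors overlap forces the string $(a_1,\dots,a_n)$ to contain a reducible sub-pattern; excising it produces a shorter tuple that still embeds, so the inductive hypothesis applies. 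Carrying this to the end pins down exactly the admissible continued fractions and shows they coincide with the defining pattern of $\mathcal{R}$. I expect the delicacy of this embedding analysis — tracking signs, overlaps, and the base cases of the induction — to be where all the real work and potential pitfalls reside; the topological reductions above are formal by comparison.
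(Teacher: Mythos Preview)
This theorem is quoted in the paper as a result of Lisca, not proved there; the only discussion the paper offers is a one-sentence summary to the effect that Lisca uses Donaldson's diagonalization theorem to obstruct $\Sigma_2(K)$ from bounding a rational homology ball whenever $K \notin \mathcal{R}$. Your outline is a faithful expansion of exactly that strategy---the double branched cover, the negative-definite plumbing, the closed manifold $Z$, diagonalization, and the lattice-embedding classification---so there is nothing to compare beyond noting that you have correctly reconstructed Lisca's approach as the paper summarizes it.
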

Note that $\mathcal{R}$ is an explicit family of rational knots known to be ribbon at least since \cite{CG86}. Lisca argues that if $K$ is not in $\mathcal{R}$, then  Donaldson's diagonalization theorem obstructs $\Sigma_2(K)$ from  smoothly bounding a rational homology ball, and hence obstructs $K$ from being smoothly slice. 
Shortly thereafter, Greene and Jabuka used similar arguments along with additional obstructions coming from Heegaard Floer homology to completely classify the smoothly slice odd 3-strand pretzel knots.\footnote{Note that we call a pretzel knot $P(p_1, \dots, p_n)$  \emph{odd} if all of its parameters $p_i$ are odd and \emph{even} if (exactly) one parameter is even.}
\begin{thm}[\cite{GJ13}]\label{smoothpretzel}
Let $K$ be an odd 3-strand pretzel knot. Then $K$ is smoothly slice iff it is ribbon.
\end{thm}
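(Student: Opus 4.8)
The plan is to establish the nontrivial direction: every odd $3$-strand pretzel knot $K = P(p_1,p_2,p_3)$ that is smoothly slice is in fact ribbon, the reverse implication being immediate. I would first normalize using the symmetries of pretzel knots: cyclically permuting the $p_i$ and simultaneously negating all of them (taking the mirror) preserve the sliceness question, so one can reduce to a short list of sign patterns for $(p_1,p_2,p_3)$. The key input is that if $K$ is smoothly slice then its double branched cover $\Sigma_2(K)$ bounds a smooth rational homology $4$-ball $B$. Since $K$ is a Montesinos knot, $\Sigma_2(K)$ is a Seifert fibered space bounding a canonical plumbed $4$-manifold $X$ associated to a star-shaped weighted graph whose legs are read off from continued fraction expansions of the $p_i$; after fixing orientations one arranges $X$ to be negative definite.

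The core argument then combines two obstructions, mirroring the strategy of Lisca's rational-knot classification but strengthening it. First I would invoke Donaldson's diagonalization theorem: gluing the negative-definite $X$ to the rational homology ball $B$ along $\Sigma_2(K)$ produces a closed negative-definite $4$-manifold, whose intersection form is diagonalizable, and this forces the intersection lattice $(H_2(X),Q_X)$ to embed isometrically into the standard negative-definite diagonal lattice $\Z^n$ with $n = b_2(X)$. Carrying out the combinatorial classification of which triples $(p_1,p_2,p_3)$ admit such a lattice embedding is the first main computation; it eliminates all but a short list of candidate families. For the families on this list that are ribbon, I would exhibit explicit ribbon disks, settling those cases directly.

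The subtle point is that Donaldson's obstruction alone does not rule out every non-ribbon survivor, and here I would bring in the Ozsv\'ath--Szab\'o correction terms (the $d$-invariants). A rational homology sphere $Y$ bounding a rational homology ball must possess a metabolizer $G \le H_1(Y)$ for its linking form, with $|G|^2 = |H_1(Y)|$, such that the spin$^c$ structures extending over the ball form a coset of $G$ on which all $d$-invariants vanish. Since the $d$-invariants of $\Sigma_2(K)$ are computable from the plumbing $X$ via the Ozsv\'ath--Szab\'o plumbing formula (the graph having at most one bad vertex), I would compute them, pair the relevant spin$^c$ structures against the metabolizers prescribed by the linking form, and derive a contradiction in each remaining non-ribbon case. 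The hard part will be exactly this last step: the plumbing formula is recursive and the bookkeeping of spin$^c$ structures together with the linking-form metabolizer condition is delicate, so controlling these $d$-invariant computations uniformly across the surviving Seifert fibered families is where the real work lies.
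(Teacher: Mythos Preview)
Your proposal is correct and faithfully outlines the original Greene--Jabuka argument: Donaldson diagonalization applied to the plumbed negative-definite filling of $\Sigma_2(K)$ cuts the candidates down to a finite list of families, and Heegaard Floer $d$-invariants (computed via the Ozsv\'ath--Szab\'o plumbing formula) eliminate the non-ribbon survivors.

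The paper, however, does not reprove Theorem~\ref{smoothpretzel} this way; it cites it as a result of \cite{GJ13} and then observes that its own Theorem~\ref{maintheorem}, together with Fintushel--Stern's Theorem~\ref{toppretzel}, gives an independent proof. The route is genuinely different: rather than smooth gauge-theoretic and Heegaard Floer obstructions, the paper computes Casson--Gordon signatures $\sigma_k(\Sigma_2(K),\chi)$ for characters $\chi:H_1(\Sigma_2(K))\to\Z_d$ and shows that whenever $\Delta_K(t)\neq 1$ and $K$ is not ribbon, some such signature is too large for $K$ to be even \emph{topologically} slice. The case $\Delta_K(t)=1$ (where $\det K=1$ and no such characters exist) is handled by invoking Fintushel--Stern. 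What your approach buys is a self-contained smooth argument that does not need to split off the $\Delta_K=1$ case; what the paper's approach buys is a strictly stronger conclusion in the $\Delta_K\neq 1$ case, namely an obstruction to topological rather than merely smooth sliceness.
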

The ribbon knots of the above theorem are the odd 3-strand pretzels that are, up to reflection, of the form $P(p,q,-q)$ or $P(1,q,-q-4)$ for some odd $p,q>0$. 
Note that both Lisca and Greene-Jabuka actually prove stronger results that completely characterize the order of rational knots and odd 3-strand pretzel knots in the smooth concordance group. Note that Theorem~\ref{smoothpretzel} recovers the following result of Fintushel and Stern.
\begin{thm}[\cite{FS85}]\label{toppretzel}
Let $K$ be a nontrivial odd 3-strand pretzel knot with $\Delta_K(t)=1$. Then $K$ is not smoothly slice. 
\end{thm}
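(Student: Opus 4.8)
The plan is to pass from the sliceness of $K$ to a statement about its double branched cover $\Sigma_2(K)$, and then obstruct that with a gauge-theoretic invariant. First I would observe that the hypothesis $\Delta_K(t)=1$ forces $\det(K)=|\Delta_K(-1)|=1$, so that $H_1(\Sigma_2(K);\Z)=0$ and $\Sigma_2(K)$ is an integral homology sphere. Since $K=P(p,q,r)$ is a $3$-strand pretzel with all parameters odd, its double branched cover is Seifert fibered over $S^2$ with three exceptional fibers, and being a homology sphere it is a Brieskorn-type Seifert fibered homology sphere $\Sigma(\alpha_1,\alpha_2,\alpha_3)$ with pairwise coprime multiplicities determined by (the absolute values of) $p,q,r$. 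I would also record that $K$ is nontrivial, so that $\Sigma_2(K)\neq S^3$ and the Seifert invariants are genuinely nontrivial.

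Next comes the standard reduction. If $K$ were smoothly slice, bounding a disk $D\subset B^4$, then the double cover $W$ of $B^4$ branched along $D$ is a smooth compact $4$-manifold with $\partial W=\Sigma_2(K)$. A Mayer--Vietoris and Poincar\'e--Lefschetz duality argument, using $\det(K)=1$, shows that $W$ is an integral homology ball. The whole problem thus reduces to showing that the Seifert fibered homology sphere $\Sigma(\alpha_1,\alpha_2,\alpha_3)$ arising here does not bound a smooth integral homology ball.

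Here Donaldson's diagonalization theorem is useless, because a homology ball has $b_2=0$ and its intersection form is vacuously standard, so a finer invariant is required. I would instead deploy an instanton-theoretic invariant of the Seifert fibered homology sphere: form the mapping-cylinder orbifold of the Seifert fibration, study the moduli space of anti-self-dual $SU(2)$ connections on $W$ capped off by this orbifold (equivalently $W$ with a cylindrical end), and extract the Fintushel--Stern $R(\alpha)$ invariant attached to the finitely many flat connections $\alpha$ coming from the Seifert structure. The homology-ball hypothesis pins down the reducible locus and the index of the deformation complex, and comparing the expected dimension of the moduli space against the contributions of the $R(\alpha)$ yields a numerical inequality that the computed values of $R$ violate. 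Equivalently, in modern language, one computes a homology-cobordism invariant of $\Sigma(\alpha_1,\alpha_2,\alpha_3)$ (a Fr\o yshov invariant, or a Heegaard Floer correction term $d$) and checks that it is nonzero; since any homology sphere bounding a homology ball must have vanishing such invariant, this gives the desired contradiction.

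The main obstacle is the gauge-theoretic computation itself: setting up the ASD moduli problem on the noncompact orbifold $4$-manifold, controlling compactness (ruling out bubbling on the end and arranging transversality at the reducibles), and carrying out the index bookkeeping that expresses the obstruction as an explicit function of $\alpha_1,\alpha_2,\alpha_3$. Once the invariant is reduced to a concrete number, verifying its nonvanishing for every admissible $(p,q,r)$ is a finite arithmetic check; but assembling the analytic package that makes the invariant a bona fide homology-ball obstruction is the substantive step.
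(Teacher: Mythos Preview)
The paper does not supply its own proof of this theorem; it is quoted from \cite{FS85} as background, and the paper's own work (the Casson--Gordon signature computations in Section~\ref{thework}) addresses only the complementary case $\Delta_K(t)\neq 1$. So there is no in-paper argument to compare against.

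That said, your sketch is a faithful outline of the original Fintushel--Stern strategy: identify $\Sigma_2(P(p,q,r))$ with the Brieskorn homology sphere $\Sigma(|p|,|q|,|r|)$, note that smooth sliceness would force it to bound a smooth acyclic $4$-manifold, and obstruct this via the gauge-theoretic $R$-invariant (or, as you say, a Fr{\o}yshov or Heegaard Floer correction term in the modern formulation). Two points deserve tightening. First, the branched double cover of $B^4$ over a slice disk is immediately only a \emph{rational} homology ball; upgrading to an integral homology ball uses the hypothesis $\Delta_K(t)=1$ (or is simply unnecessary, since $b_2=0$ is all the gauge theory needs). Second, your phrase ``a finite arithmetic check'' is misleading: there are infinitely many odd triples $(p,q,r)$ with $|pq+qr+rp|=1$, so the nonvanishing of the invariant cannot be established by enumeration. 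Fintushel and Stern handle this with their explicit trigonometric formula for $R(\alpha_1,\alpha_2,\alpha_3)$ and a uniform positivity argument; your write-up should acknowledge that this last step covers an infinite family via a formula, not a case check.
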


Lecuona uses techniques analagous to those of Greene-Jabuka to describe the smooth sliceness of even 3-strand pretzel knots, except for an exceptional family $\{\pm P_a\}$. In fact, Lecuona's results are much broader, essentially characterizing the smooth sliceness up to mutation of all even pretzel knots not in this exceptional family.  
In particular, let $P_a= P(a,-a-2, - \frac{(a+1)^2}{2})$ for $a>0$ odd. It follows from work of Jabuka in \cite{Jab10} that the knots $\{ \pm P_a\}$ are exactly the even 3-strand pretzel knots with trivial rational Witt class and determinant one.
\begin{thm}[\cite{Lec13}] \label{lecuona}
Let $K$ be an even 3-strand pretzel knot
that is not of the form $\pm P_a$ for any $a \equiv 1, 11, 37, 47,49,59 \mod 60$. 
Then $K$ is smoothly slice iff it is ribbon. 
\end{thm}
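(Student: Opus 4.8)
The forward implication (ribbon $\Rightarrow$ smoothly slice) is immediate, so the content of the theorem is the converse: an even 3-strand pretzel knot $K=P(a,b,c)$ that is smoothly slice and not of the exceptional form $\pm P_a$ must be ribbon. The plan is to follow the strategy behind Lisca's and Greene--Jabuka's results (Theorem~\ref{smoothpretzel}): exploit the fact that a smoothly slice knot has double branched cover $\Sigma_2(K)$ bounding a smooth rational homology ball, and then obstruct this for all non-ribbon parameters by combining Donaldson's diagonalization theorem with Heegaard Floer correction terms.

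First I would set up the relevant 4-manifold. Since $K$ is a Montesinos knot, $\Sigma_2(K)$ is a Seifert fibered rational homology sphere, and it is the oriented boundary of a canonical plumbing 4-manifold $X=X(\Gamma)$ along a star-shaped (three-legged) weighted graph $\Gamma$ whose leg weights are read off from the negative continued-fraction expansions of $a,b,c$. After possibly replacing $K$ by its mirror I would fix the orientation so that the intersection form $Q_X$ is negative definite; here one must track the signs and the parity of the parameters with care, since exactly one of $a,b,c$ is even.

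Next, assuming $K$ is slice, we have $\Sigma_2(K)=\partial W$ for a smooth rational homology ball $W$, so $Z:=X\cup_{\Sigma_2(K)}(-W)$ is a closed negative-definite 4-manifold with $H_1(Z;\Q)=0$. By Donaldson's theorem $Q_Z$ is diagonalizable over $\Z$, and because $-W$ is a rational homology ball this forces a lattice embedding of $(H_2(X),Q_X)$ into the standard negative-definite diagonal lattice $(\Z^n,-\mathrm{Id})$ of equal rank. I would then translate this into the existence of integral vectors realizing the intersection pattern prescribed by $\Gamma$, and carry out the number-theoretic analysis determining exactly which triples $(a,b,c)$ admit such an embedding. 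Matching the resulting solution set against the known list of ribbon even pretzel knots completes the part of the argument governed by Donaldson's theorem.

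The cases that escape this obstruction are precisely those for which $\det(K)$ is small or the lattice is too rigid to obstruct, and these cluster around the exceptional family $\pm P_a$ (which have determinant one and trivial rational Witt class). To dispatch the remaining non-exceptional cases I would invoke the Ozsv\'ath--Szab\'o correction-term obstruction: if $\Sigma_2(K)=\partial W$ with $W$ a rational homology ball, then the $d$-invariants $d(\Sigma_2(K),\mathfrak{s})$ must vanish for the $\mathrm{Spin}^c$ structures $\mathfrak{s}$ extending over $W$, a metabolizer for the linking form, and these are computable directly from $\Gamma$ via the Ozsv\'ath--Szab\'o/N\'emethi plumbing algorithm. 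I expect the main obstacle to be exactly the combinatorial lattice-embedding step: unlike Lisca's linear lattices, the star-shaped plumbings arising here form a genuinely multi-parameter family with arbitrary leg lengths, so ruling out every non-ribbon embedding demands a delicate case analysis, and coordinating it with the $d$-invariant computation so as to cover all triples outside the single excluded congruence family $\{\pm P_a\}$ is where the real difficulty lies.
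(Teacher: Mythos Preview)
This theorem is not proved in the paper; it is cited from \cite{Lec13} as background, so there is no ``paper's own proof'' to compare against. Your outline does correctly identify the strategy Lecuona actually uses---the paper explicitly says she ``uses techniques analogous to those of Greene--Jabuka,'' i.e.\ Donaldson diagonalization applied to the plumbing bounded by $\Sigma_2(K)$, supplemented by Heegaard Floer $d$-invariants---so as a high-level summary of the cited argument your proposal is accurate.

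That said, what you have written is a plan, not a proof: you yourself flag the lattice-embedding classification as ``the real difficulty,'' and indeed Lecuona's paper devotes the bulk of its length to precisely that case analysis. If the intent was to supply an independent proof, the substantive work (enumerating all embeddings of the star-shaped plumbing lattice into the standard diagonal lattice and matching the survivors against the ribbon list and the exceptional family) is entirely absent. If the intent was merely to summarize the cited result, a one-line attribution to \cite{Lec13} would be more appropriate than a multi-paragraph sketch.
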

The ribbon knots of the above theorem are the 3-strand pretzel knots that are of the form $P(p,q,-q)$ for some even $p$ and odd $q$. 

It is natural to ask to what extent these results, proved using the smooth machinery of Donaldson intersection form obstructions and Heegaard Floer homology, hold in the topological category. Theorem~\ref{toppretzel} implies that there are topologically but not smoothly slice 3-strand pretzel knots, but it is open whether smoothly slice equals topologically slice for rational knots. Note that recent work of Feller and McCoy shows that there are rational knots with distinct smooth and topological 4-genera \cite{FMc15}. 

We give an almost complete characterization of the topological sliceness of 3-strand pretzels via the computation of Casson-Gordon signatures corresponding to the double branched cover. In particular, we have the following two theorems. 

\begin{thm}\label{maintheorem}
Let $K$ be an odd 3-strand pretzel knot with nontrivial Alexander polynomial. Then $K$ is topologically slice iff $K$  is ribbon iff $K$ is of the form $\pm P(p,q,-q)$ or $\pm P(1,q, -q-4)$ for odd $p,q \in \mathbb{N}$. 
\end{thm}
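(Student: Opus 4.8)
The plan is to prove the three conditions equivalent by establishing the cycle (C) $\Rightarrow$ (B) $\Rightarrow$ (A) $\Rightarrow$ (C), where (A) is ``$K$ is topologically slice'', (B) is ``$K$ is ribbon'', and (C) is ``$K$ is of the form $\pm P(p,q,-q)$ or $\pm P(1,q,-q-4)$ for odd $p,q \in \N$''. The first implication is the observation, already recorded after Theorem~\ref{smoothpretzel}, that the knots in (C) bound obvious ribbon disks; the second is the standard fact that a ribbon knot is smoothly, hence topologically, slice. All the content therefore lies in (A) $\Rightarrow$ (C), which I would prove in contrapositive form: an odd $3$-strand pretzel $K = P(p,q,r)$ with $\Delta_K \neq 1$ that is not of the form in (C) is not topologically slice. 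The obstruction comes from the Casson--Gordon signature invariants of the double branched cover $\Sigma_2(K)$, which obstruct topological, not merely smooth, sliceness.

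First I would carry out an algebraic reduction. Since $P(p,q,r)$ bounds an obvious genus-one Seifert surface, its Seifert form is a $2\times 2$ integer matrix $V$ whose entries are linear in $p,q,r$; a direct computation gives $\det K = |s|$ where $s = pq+qr+rp$, and shows that $\Delta_K = 1$ exactly when $s=-1$ (recovering the setting of Theorem~\ref{toppretzel}). A topologically slice knot is algebraically slice, so $\sign(K)=0$, $\det K$ is an odd perfect square, and $\Delta_K(t)$ is a norm. Imposing these conditions on $V$, and using the invariance of $P(p,q,r)$ under permutations of its parameters and under the mirror $P(p,q,r) \mapsto -P(-p,-q,-r)$, cuts the admissible triples down to the families in (C) together with a small number of explicit families that are algebraically slice but whose sliceness is not settled by classical invariants. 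These residual families are essentially the ones that Greene--Jabuka eliminate smoothly in Theorem~\ref{smoothpretzel}; my task is to eliminate them topologically.

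Next I would set up the Casson--Gordon machine. For each residual family I would compute $H_1(\Sigma_2(K))$, the cokernel of $V+V^T$, together with its linking form $\lambda$, and enumerate the metabolizers of $\lambda$, i.e.\ the subgroups $M$ with $|M|^2 = |H_1(\Sigma_2(K))|$ on which $\lambda$ vanishes. If $K$ were topologically slice then $\Sigma_2(K)$ would bound a topological rational homology ball, which yields a metabolizer $M$ such that the Casson--Gordon invariant $\sigma(K,\chi)$ vanishes for every prime-power-order character $\chi$ vanishing on $M$; crucially, this form of the obstruction holds in the topological category, since it rests only on the existence of the bounding $4$-manifold and a signature computation. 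To reach a contradiction it then suffices, for every metabolizer $M$, to exhibit a single prime-power character $\chi$ vanishing on $M$ with $\sigma(K,\chi) \neq 0$.

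The crux, and the step I expect to be hardest, is the explicit computation and non-vanishing of these signatures. Using the genus-one Casson--Gordon formula of Gilmer, I would express $\sigma(K,\chi)$ as a sum of Tristram--Levine signatures of a curve $J$ on the Seifert surface dual to $\chi$, corrected by a term recording the self-linking of $J$; for pretzel knots the relevant curves are unknotted, so $\sigma(K,\chi)$ reduces to an explicit arithmetic function of $(p,q,r)$ and of the order and value of $\chi$. The remaining difficulty is genuinely number-theoretic: one must show that for each non-ribbon family and each metabolizer $M$, this signature function does not vanish identically on the prime-power characters killing $M$. Unlike the smooth setting, where Donaldson's theorem supplies a clean lattice-embedding contradiction, here there is no single clean obstruction, and I expect the proof to require bounding the signature sums and locating a prime-power character at which non-vanishing can be certified, uniformly across the residual families.
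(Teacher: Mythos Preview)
Your strategic outline matches the paper's: the easy implications are as you say, and the content lies in obstructing the topological sliceness of any non-ribbon algebraically slice odd $3$-strand pretzel $P(p,q,r)$ with $\Delta_K\neq 1$ via Casson--Gordon signatures of $\Sigma_2(K)$. However, your computational route differs from the paper's and contains a genuine gap.

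The gap is in your use of Gilmer's genus-one formula together with the claim that ``the relevant curves are unknotted''. In its standard form that formula takes a curve $J$ on the Seifert surface generating a metabolizer of the \emph{Seifert form}; such $J$ has self-linking zero, there is no correction term, and the Casson--Gordon signature is a sum of Tristram--Levine signatures $\sigma_J(\omega)$. If $J$ were unknotted this sum would vanish and you would obtain no obstruction --- exactly what happens for the ribbon pretzels $P(p,q,-q)$, whose metabolizer curve through the $q$ and $-q$ bands is unknotted. For the non-ribbon algebraically slice pretzels you must obstruct, the metabolizer class in $H_1(F)\cong\Z^2$ is a primitive vector $(m,n)$ that is not one of the three ``obvious'' band-pair classes (for instance $P(1,3,-13)$ has $\det=49$ and metabolizer class $(1,1)$), and the corresponding simple closed curve on the pretzel surface is not unknotted. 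So your assertion is either false or self-defeating; the problem does not reduce to a self-linking correction as you describe. One could still proceed via Gilmer's formula by identifying these curves as torus knots and computing their signatures, but that is a different and substantially harder computation than the one you sketch.

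The paper bypasses Seifert-surface curves entirely. It uses a four-component surgery presentation of $\Sigma_2(K)$ (a $0$-framed unknot Hopf-linked to unknots framed $p,q,r$) and computes $\sigma_1(\Sigma_2(K),\chi)$ via Gilmer's \emph{cable} formula (Proposition~\ref{computation1}) and the Cimasoni--Florens colored-signature formula (Theorem~\ref{coloredlinks}). Rather than an ``algebraic reduction to residual families'', the argument is organized by choosing a prime $d\mid D$ (where $D^2=-(pq+qr+rp)$) and splitting according to which of $p,q,r$ are divisible by $d$: Propositions~\ref{dividespq}--\ref{dividespqr} handle $d$ dividing some of the parameters, Propositions~\ref{generaldneqdividepqr}--\ref{chipischiqprop} handle $d$ dividing none (with sub-cases on whether $p\equiv q\bmod d$ and whether $r=-(4p+q)$), and Proposition~\ref{powerof3} treats the residual situation where $D$ is a power of $3$. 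Finally, note that the required inequality is $|\sigma_1(\Sigma_2(K),\chi)|>\dim H_1^\chi(\Sigma_2(K))+1$, not merely $\sigma(K,\chi)\neq 0$; in the case $d\mid p,q,r$ this bound is $2$, and a separate divisibility-by-$4$ lemma (Lemma~\ref{evenintegerlemma}) is needed to close the argument.
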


\begin{thm}\label{evenpretzelthm}
Let $K$ be an even 3-strand pretzel knot that is not of the form $\pm P_a$ for $a \equiv 1, 11, 37, 47,59 \mod 60$.
Then $K$ is topologically slice iff $K$ is ribbon iff $K$ is of the form $P(p,q,-q)$ for some even $p$ and odd $q$. 
\end{thm}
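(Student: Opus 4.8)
The plan is to prove the chain $P(p,q,-q) \Rightarrow \text{ribbon} \Rightarrow \text{topologically slice} \Rightarrow P(p,q,-q)$. The first two implications are routine: for even $p$ and odd $q$ the pretzel $P(p,q,-q)$ bounds an evident ribbon disk (fusing the $q$- and $(-q)$-tangles), and every ribbon knot is smoothly, hence topologically, slice. All of the content therefore lies in the implication topologically slice $\Rightarrow P(p,q,-q)$, which I would prove by contraposition: assuming $K$ is an even $3$-strand pretzel outside the exceptional family and not of the form $P(p,q,-q)$, I will produce a Casson-Gordon obstruction to its topological sliceness. This is precisely the list of knots that Lecuona's Theorem~\ref{lecuona} excludes from \emph{smooth} sliceness using Donaldson's theorem; the point here is to replace that smooth input with Casson-Gordon signatures, which obstruct sliceness already in the topological category.

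First I would record the standard consequences of topological sliceness. By Fox-Milnor, $\det(K)$ must be a perfect square, which already constrains $(p_1,p_2,p_3)$ severely. More precisely, if $K$ is topologically slice then $\Sigma := \Sigma_2(K)$ bounds a topological rational homology $4$-ball $W$, so the kernel $G = \ker\bigl(H_1(\Sigma) \to H_1(W)\bigr)$ is a metabolizer for the nonsingular linking form on $H_1(\Sigma)$, and the (topologically valid) Casson-Gordon theorem forces $\sigma(K,\chi) = 0$ for every prime-power character $\chi \colon H_1(\Sigma) \to \Q/\Z$ that vanishes on $G$. For a $3$-strand pretzel the group $H_1(\Sigma)$ is presented by the $2 \times 2$ Goeritz matrix of the standard diagram, with $|H_1(\Sigma)| = \det(K) = |p_1p_2 + p_2p_3 + p_3p_1|$; from this presentation I can write down the linking form explicitly and enumerate its (necessarily few) metabolizers $G$.

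The heart of the matter is the signature computation. Since $\Sigma$ is a Seifert fibered space over $S^2$ with three singular fibers, the Casson-Gordon signature $\sigma(K,\chi)$ has a closed form obtained by applying the $G$-signature theorem to a canonical $4$-manifold bounding the relevant cyclic cover; this expresses $\sigma(K,\chi)$ as an explicit sum of sawtooth/Dedekind-sum terms in $(p_1,p_2,p_3)$ and $\chi$. For each candidate metabolizer $G$ I would then exhibit a prime-power character $\chi$ vanishing on $G$ for which this sum is nonzero, contradicting sliceness. I expect the argument to split into cases according to the signs and relative magnitudes of the $p_i$, and the genuinely hard step will be the nonvanishing estimate: the sawtooth terms can partially cancel, so one must choose $\chi$ judiciously — or sum the signature over a well-chosen family of characters — and bound the outcome away from zero uniformly in the parameters. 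This is exactly where the exceptional family must be set aside: the knots $\pm P_a$ have $\det = 1$, so $H_1(\Sigma) = 0$, the only metabolizer and the only character are trivial, and the Casson-Gordon obstruction is vacuous. For every other even pretzel not of the form $P(p,q,-q)$ one has $\det(K) > 1$, nontrivial characters exist, and it remains to verify that the signature estimates obstruct topological sliceness in each case.
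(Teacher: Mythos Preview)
Your overall strategy --- use Casson-Gordon signatures of $\Sigma_2(K)$ to obstruct topological sliceness of the non-ribbon even pretzels --- is the paper's strategy as well, and your identification of why the $\det(K)=1$ knots are invisible to this obstruction is correct. However, there is a genuine gap in the scope of what you claim to cover.

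You write that ``the exceptional family must be set aside: the knots $\pm P_a$ have $\det=1$ \dots\ For every other even pretzel not of the form $P(p,q,-q)$ one has $\det(K)>1$.'' But the theorem as stated does \emph{not} set aside all of the $P_a$: it excludes only $a\equiv 1,11,37,47,59 \bmod 60$. Every other $P_a$ is still in scope, still has $\det=1$, and still admits no nontrivial character --- so your Casson-Gordon argument says nothing about them. The paper handles these remaining determinant-one cases by different means: for $a\not\equiv 1,11,37,47,49,59\bmod 60$ one cites Lecuona's result that $P_a$ is not even algebraically slice, and for the newly-removed residue class $a\equiv 49\bmod 60$ the paper gives a direct Fox-Milnor argument on the full Alexander polynomial (working modulo $5$ and isolating the cyclotomic factor $\Phi_3$). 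Your sketch invokes Fox-Milnor only at the level of the determinant, which is $1$ here and gives no information; without the polynomial-level argument the $a\equiv 49$ case is simply missing.

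Two smaller points. First, you omit the preliminary reduction (via Jabuka's rational Witt class computation) that an algebraically slice even pretzel is, up to reflection, either $P(p,-p,q)$ or $P(-p,p+2,q)$ with $2q-p^2-2p=m^2>0$; the paper uses this to shrink the case analysis to a single two-parameter family before any signatures are computed. Second, the vanishing statement you use is slightly too strong: the Casson-Gordon theorem gives $\bar\sigma_1(\tau(K,2,\chi))=0$, but the computable $3$-manifold signature $\sigma_1(\Sigma_2(K),\chi)$ differs from this by up to $\dim H_1^\chi(\Sigma_2(K))+1$, so the working obstruction is $|\sigma_1(\Sigma_2(K),\chi)|>1$ (after checking the relevant cover is a rational homology sphere), not $\sigma_1=0$.
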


Note that Theorems~\ref{maintheorem} and~\ref{evenpretzelthm}, when combined with Fintushel-Stern's result above (\cite{FS85}), recover Greene-Jabuka and Lecuona's results. We also have the following easy corollary. 
\begin{cor}\label{2bridge}
Let $K$ be a genus one alternating knot. 
Then $K$ is topologically slice iff $K$ is ribbon. 
\end{cor}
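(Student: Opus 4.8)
The plan is to reduce Corollary~\ref{2bridge} to the pretzel classifications already in hand, namely Theorems~\ref{maintheorem} and~\ref{evenpretzelthm}. The one substantive ingredient I would isolate as a lemma is the classical fact that, \emph{up to reflection, every genus one alternating knot is an alternating $3$-strand pretzel knot}. Granting this, essentially nothing remains, which is exactly why the statement can be advertised as a corollary rather than a theorem.

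With the lemma in hand I would argue as follows. Since $K$ has Seifert genus exactly one it is nontrivial, and since the breadth of the Alexander polynomial of an alternating knot equals twice its genus, we get $\Delta_K(t)\neq 1$; moreover $\det(K) = |\Delta_K(-1)| > 1$, as holds for every nontrivial alternating knot. If the lemma presents $K$ as an odd $3$-strand pretzel, then Theorem~\ref{maintheorem} applies verbatim (its hypothesis $\Delta_K(t)\neq 1$ is met) and gives that $K$ is topologically slice iff it is ribbon. If instead $K$ is presented as an even $3$-strand pretzel, then I would invoke Theorem~\ref{evenpretzelthm}; here the only thing to check is that $K$ does not lie in the excluded family $\{\pm P_a\}$, and this is automatic because each $\pm P_a$ has determinant one while we have just seen $\det(K)>1$. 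In either case the corollary follows immediately.

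The real work, and the step I expect to require the most care, is the classification lemma, which I would establish from the combinatorics of a reduced alternating diagram $D$ of $K$. Seifert's algorithm applied to $D$ produces a surface $F$ that realizes the genus of the alternating knot, so $\chi(F)=-1$; writing $s$ and $c$ for the numbers of Seifert circles and crossings, this gives $c = s+1$, whence the Seifert graph $G$ (one vertex per Seifert circle, one edge per crossing) is connected with first Betti number $c-s+1 = 2$. Reducedness of $D$ rules out valence one vertices, so after suppressing valence two vertices $G$ becomes one of the two multigraphs with first Betti number two and minimum valence at least three: the theta graph or the two-loop bouquet. The theta graph exhibits $K$ as a pretzel knot $P(p,q,r)$, whose alternation forces $p,q,r$ to share a sign, while the bouquet exhibits $K$ as a double twist knot. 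I would then verify that every double twist knot is a genus one two-bridge knot that can be written, up to reflection, as a pretzel $P(\pm 1, q, r)$ with $q,r$ odd, as illustrated by $4_1 = P(1,1,-3)$ and by $6_1 = P(1,1,-5) = P(1,1,-1-4)$ (the latter landing precisely in the ribbon family $P(1,q,-q-4)$ of Theorem~\ref{maintheorem}). This identifies every genus one alternating knot with an alternating $3$-strand pretzel and completes the reduction. The delicate points are the graph-theoretic enumeration and, above all, the bookkeeping of signs and orientations needed to confirm that the double twist knots really are captured by the pretzel families appearing in Theorem~\ref{maintheorem}.
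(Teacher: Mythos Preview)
Your approach is essentially the paper's: reduce to $3$-strand pretzels via a classification of genus one alternating knots, then invoke the main theorems. The paper simply cites Stoimenow \cite{Sto01} for the classification---genus one alternating knots are either odd $3$-strand pretzels with all parameters of the same sign, or rational---whereas you sketch a Seifert-graph argument toward the same end.

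Two remarks on where your version diverges. First, the even-pretzel branch you include never actually arises. Stoimenow's dichotomy yields only odd same-sign pretzels (which have nonzero signature, so are not even algebraically slice) or genus-one rational knots; the paper then writes the latter explicitly as the \emph{odd} pretzels $P(1,2a-1,-(2b+1))$ and applies Theorem~\ref{maintheorem} alone. Your invocation of Theorem~\ref{evenpretzelthm} is therefore superfluous, and your Lemma as phrased (``alternating $3$-strand pretzel'') is slightly off, since the double-twist knots you find in the bouquet case are presented as pretzels with mixed-sign parameters, hence via a non-alternating pretzel diagram. This does no damage to the logic, but it does indicate that the classification is cleaner than your case split suggests.

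Second, your graph-theoretic sketch of the Lemma is plausible but not yet a proof. Reducing the Seifert graph to a theta or a bouquet is fine; the genuinely delicate step is the one you flag yourself---passing from ``Seifert graph is a subdivided theta'' to ``the knot is a pretzel $P(p,q,r)$'' requires controlling how the Seifert circles are nested in the plane, which is exactly where the alternating hypothesis does real work and where Stoimenow's argument has content. The paper sidesteps this by citing \cite{Sto01}; if you want a self-contained proof you will need to supply that planar-nesting argument.
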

\begin{proof}
Let $K$ be a genus one alternating knot. Then by work of Stoimenow in \cite{Sto01}, $K$ is either an odd 3-strand pretzel knot with all parameters of the same sign (and hence has nonzero signature and is not even algebraically slice) or is rational. 
Therefore we may assume that $K$ is a genus one rational knot and hence (up to reflection) corresponds to the fraction $\frac{4ab+1}{2a}$ for some $a,b>0$. Note that $K$ has determinant $4ab+1>1$ and hence does not have trivial Alexander polynomial. Therefore, since such knots can also be described as the 3-strand pretzel knot $P(1,2a-1, -(2b+1))$, Theorem~\ref{maintheorem} implies that $K$ is ribbon.
\end{proof}

Lecuona conjectures that the (non)-existence of a Fox-Milnor factorization for the Alexander polynomial obstructs even the algebraic sliceness of the $\{\pm P_a\}$ family. When combined with Theorem~\ref{evenpretzelthm}, this would imply an affirmative answer to the following conjecture.

\begin{conj}
Let $K$ be an even 3-strand pretzel knot. Then $K$ is topologically slice iff $K$ is ribbon. 
\end{conj}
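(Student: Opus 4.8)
The plan is to derive the Conjecture from Theorem~\ref{evenpretzelthm} by disposing of the single family it leaves open. By Theorem~\ref{evenpretzelthm}, every even 3-strand pretzel knot that is \emph{not} of the form $\pm P_a$ with $a \equiv 1,11,37,47,59 \pmod{60}$ already satisfies ``topologically slice iff ribbon'', so it suffices to treat these exceptional knots. First I would record that $P_a = P(a,-a-2,-(a+1)^2/2)$ is never of the ribbon form $P(p,q,-q)$: requiring any two of the three parameters to be negatives of one another yields $a=a+2$, $a^2=-1$, or $a^2=3$, none of which has an integer solution. Hence for these $K$ the ribbon side of the biconditional is vacuous, and the entire problem reduces to showing that each such $K$ is \emph{not} topologically slice.

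Since topological sliceness implies algebraic sliceness, which in turn forces the Fox--Milnor condition $\Delta_K(t) \doteq f(t)f(t^{-1})$ for some $f \in \Z[t,t^{-1}]$, the approach is to prove that $\Delta_{P_a}(t)$ admits no such factorization. I would begin by noting that although $\det(P_a)=|pq+qr+rp|=1$ (with $p=a$, $q=-a-2$, $r=-(a+1)^2/2$), the knot $P_a$ is \emph{not} of genus one: its obvious pretzel spanning surface is non-orientable because exactly one parameter is even, and one checks that the Seifert genus is in fact two, so $\Delta_{P_a}(t)$ is a nontrivial (quartic) symmetric polynomial. This nontriviality is essential, since a trivial Alexander polynomial would render $P_a$ topologically slice by Freedman's theorem and falsify the Conjecture outright. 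Writing $\Delta_{P_a}(t)$ in the symmetrizing variable $s = t + t^{-1}$ turns it into a quadratic in $s$, and the existence of a Fox--Milnor factorization becomes the solvability of an explicit finite system of integer equations in the coefficients of $f$; this reduces in turn to asking whether a certain discriminant $\delta(a)$, polynomial in $a$, is a perfect square.

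The hard part will be the resulting number theory: one must show that $\delta(a)$ fails to be a perfect square precisely for $a \equiv 1,11,37,47,59 \pmod{60}$. The appearance of the modulus $60$ strongly suggests that the argument proceeds by studying $\delta(a)$ as a quadratic residue modulo the primes $2,3,5$ and ruling out squareness class by class. The delicacy is that non-squareness modulo small primes is only a sufficient, not a necessary, obstruction, so some residue classes may resist a purely local analysis and demand a more global or descent-type argument on $\delta$. This is exactly the content of Lecuona's conjectured Fox--Milnor obstruction, and it is the genuine crux of the whole matter: all of the topological input is already supplied by Theorem~\ref{evenpretzelthm} together with the classical Fox--Milnor theorem, so that what remains is entirely the arithmetic statement that the integer $\delta(a)$ is never a square on the relevant residue classes.
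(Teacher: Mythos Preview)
The statement you are attempting to prove is labeled a \emph{Conjecture} in the paper, not a theorem, and the paper gives no proof of it. The paragraph immediately preceding the Conjecture says exactly what you rediscover: Lecuona conjectures that the Fox--Milnor condition fails for the $\{\pm P_a\}$ family, and \emph{if} that is true then combining with Theorem~\ref{evenpretzelthm} yields the Conjecture. Your proposal is thus not an alternative proof but a restatement of this same reduction.

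The substantive gap is that you do not actually establish the arithmetic claim. You correctly isolate what must be shown---that $\Delta_{P_a}(t)$ has no Fox--Milnor factorization for $a\equiv 1,11,37,47,59\pmod{60}$---and you correctly note that this amounts to showing a certain integer is never a square on those classes, but you then write that this ``is exactly the content of Lecuona's conjectured Fox--Milnor obstruction, and it is the genuine crux of the whole matter.'' That sentence concedes the point: the crux is an open problem, and you have not solved it. (The paper already carries out precisely this Fox--Milnor argument for the one tractable class $a\equiv 49\pmod{60}$ in the proof of the theorem in Section~5, using irreducibility of $\Phi_3$ over $\mathbb{F}_5$; the remaining five classes are what resist the local methods you allude to.) So your proposal is an outline of the known reduction, not a proof.

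One minor correction: your check that $P_a$ is never of ribbon form $P(p,q,-q)$ should also allow the even parameter to play the role of $p$; i.e., you must rule out $a=-(-a-2)$, $-(a+1)^2/2=-a$, $-(a+1)^2/2=-(-a-2)$, and also $a=-(a+2)$ (giving $a=-1$, excluded since $a>0$). This is routine but your list of equations is incomplete as written.
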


We can conveniently summarize Theorems~\ref{maintheorem} and~\ref{evenpretzelthm} in the following (slightly weaker) statement. 
\begin{thm}
Let $K$ be a 3-strand pretzel knot with nontrivial determinant. Then $K$ is topologically slice iff $K$ is ribbon. 
\end{thm}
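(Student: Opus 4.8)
The plan is to reduce the statement to the two main theorems by splitting on the parity of $K$ and observing that the hypothesis $\det(K) \neq 1$ excludes precisely the knots that Theorems~\ref{maintheorem} and~\ref{evenpretzelthm} are forced to omit. Recall that a 3-strand pretzel $P(p_1,p_2,p_3)$ is a knot exactly when all three parameters are odd or exactly one is even, so the odd and even cases together are exhaustive.

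First I would handle the odd case, where the key tool is the general identity $\det(K) = |\Delta_K(-1)|$. Since $\Delta_K(t) = 1$ would force $\det(K) = 1$, the hypothesis $\det(K) \neq 1$ guarantees $\Delta_K(t) \neq 1$. Thus an odd 3-strand pretzel knot with nontrivial determinant automatically has nontrivial Alexander polynomial, and Theorem~\ref{maintheorem} applies directly to give that $K$ is topologically slice iff it is ribbon.

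Next I would handle the even case. The only even 3-strand pretzel knots not covered by Theorem~\ref{evenpretzelthm} lie in the exceptional family $\{\pm P_a\}$, where $P_a = P(a,-a-2,-\tfrac{(a+1)^2}{2})$. The point is that every such knot has determinant one: applying $\det P(p,q,r) = |pq+qr+rp|$ and simplifying gives $\det P_a = |{-a^2-2a+(a+1)^2}| = 1$ (equivalently, this is part of the cited computation of Jabuka \cite{Jab10}). Consequently $\det(K) \neq 1$ rules out membership in $\{\pm P_a\}$, so Theorem~\ref{evenpretzelthm} applies and again yields that $K$ is topologically slice iff it is ribbon; assembling the two cases finishes the proof.

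I do not anticipate a genuine obstacle, as all the substantive content lives in the Casson--Gordon signature computations already established in Theorems~\ref{maintheorem} and~\ref{evenpretzelthm}, and the present statement merely repackages them. The only delicate points are the two determinant observations above --- the equivalence of a nontrivial determinant with a nontrivial Alexander polynomial in the odd case, and the determinant-one computation for the family $\{\pm P_a\}$ in the even case.
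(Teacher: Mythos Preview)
Your proposal is correct and matches the paper's intent: the paper presents this theorem as an immediate summary of Theorems~\ref{maintheorem} and~\ref{evenpretzelthm} without a separate proof, and your two observations (that $\det(K)\neq 1$ forces $\Delta_K(t)\neq 1$ in the odd case via $\det(K)=|\Delta_K(-1)|$, and that every $\pm P_a$ has determinant one) are exactly the links needed to see why the nontrivial-determinant hypothesis subsumes the exclusions in those theorems.
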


A natural next question is the extent to which double branched cover Casson-Gordon signatures obstruct the topological sliceness of pretzel knots with more than three strands. 
However, several difficulties arise. First, pretzel knots with more than three strands have nontrivial mutations which often persist in concordance. (See \cite{HKL10} for examples.) However, even if we are willing to consider knots only up to mutation we cannot expect a complete answer from these techniques. In particular, there exist algebraically slice odd 5-strand pretzel knots with nontrivial Alexander polynomial but trivial determinant. (For example, consider $P(7,11,53,-5,-19)$.) There is no reason to believe that these knots are topologically slice, but there are also no double branched cover Casson-Gordon signatures to serve as sliceness obstructions.  

%Note also that double branched cover Casson-Gordon signatures are known to be not sufficient to characterize the topological slice status of rational knots. 
%(For example, the rational knot corresponding to fraction $\frac{225}{94}$ has vanishing sliceness obstruction from these signatures, and yet is not topologically slice, as shown in \cite{Mil15b}).
%However, it is conjectured that these Casson-Gordon signatures do completely detect the ribbon status of rational knots. 
%(See \cite{EL09} for a discussion of this from a number theoretic point of view.) 

\section{Casson-Gordon signature invariants}

Casson and Gordon associate to a knot $K$ and a map $\chi:H_1(\Sigma_n(K))\to \mathbb{Z}_d$  the invariant $\tau(K,n, \chi) \in L_0(\mathbb{Q}(\omega)(t))\otimes \mathbb{Q}$. Note that $L_0(\mathbb{Q}(\omega)(t))$ is the Witt group  of non-singular Hermitian forms on finite-dimensional $\mathbb{Q}(\omega)(t)$-modules, where $\omega=e^\frac{2\pi i}{d}$. These invariants obstruct $K$'s topological sliceness as follows.

\begin{thm}[\cite{CG86}]\label{cgvanishingthm}
Let $K$ be a topologically slice knot and $n$ a prime power. 
Then there exists a square-root order subgroup $M \leq H_1(\Sigma_n(K))$, invariant under the action of the covering transformations, with the linking form of $\Sigma_n(K)$ vanishing on $M \times M$ (i.e. $M$ is a \emph{metabolizer} for the linking form) such that if $\chi$ is a prime-power order character  with $\chi|_{M}=0$, then $\tau(K, n, \chi)=0$.
\end{thm}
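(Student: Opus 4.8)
The plan is to extract the metabolizer $M$ from a slice disk for $K$ and then to show that any character vanishing on $M$ extends over the corresponding branched cover, which forces the Casson-Gordon invariant to vanish because that cover is homologically trivial. So let $D\subset B^4$ be a slice disk for $K$, and let $V=\Sigma_n(D)$ be the $n$-fold cyclic cover of $B^4$ branched over $D$. This is a compact oriented $4$-manifold with $\partial V=\Sigma_n(K)$, and it carries a $\Z_n$-action by covering transformations. The first step is to show that, precisely because $n$ is a prime power, $V$ is a rational homology ball: a transfer argument in the spirit of Smith theory gives $H_*(V;\Q)\cong H_*(B^4;\Q)$, so that $H_1(V;\Q)=0$ and the intersection form on $H_2(V;\Q)$ is trivial.

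Next I would set $M=\ker\bigl(H_1(\Sigma_n(K))\to H_1(V)\bigr)$. Since the inclusion $\Sigma_n(K)=\partial V\hookrightarrow V$ commutes with the deck transformations, $M$ is automatically invariant under the covering transformations. A standard \emph{half lives, half dies} argument, applying Poincaré–Lefschetz duality to the pair $(V,\partial V)$ and using that $V$ is a $\Q$-homology ball, shows that $M$ has square-root order in $H_1(\Sigma_n(K))$ (here $\Sigma_n(K)$ is the relevant rational homology sphere). The same duality identifies the linking form of $\Sigma_n(K)$ with a pairing that vanishes on classes that bound in $V$, so $M$ is a metabolizer for the linking form.

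The final and most delicate step is the signature computation. Given a prime-power order character $\chi$ with $\chi|_M=0$, the map $\chi$ factors through $H_1(\Sigma_n(K))/M$, which injects into the torsion of $H_1(V)$; one then checks that $\chi$ extends to a character $\tilde\chi$ on $H_1(V)$, after the usual passage to a multiple that is built into the $L_0\otimes\Q$ target of the invariant. The invariant $\tau(K,n,\chi)$ is defined as a signature defect computed from \emph{any} $4$-manifold bounded by $(\Sigma_n(K),\chi)$; taking this manifold to be $(V,\tilde\chi)$ and using that $V$ is a rational homology ball, both the twisted and untwisted intersection forms are rationally trivial, hence Witt-trivial, so their difference represents $0$ in $L_0(\Q(\omega)(t))\otimes\Q$ and $\tau(K,n,\chi)=0$.

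The hard part will be making this last step rigorous: one must verify that the character genuinely extends over $V$ and that the twisted homology $H_*(V;\Q(\omega)(t))$ is controlled well enough that the associated Hermitian form vanishes in the Witt group. This requires unwinding the definition of $\tau$ via the Atiyah–Singer $G$-signature theorem (or, equivalently, Wall non-additivity) and confirming that the rational-homology-ball hypothesis kills the relevant signature defects. The prime-power hypotheses on both $n$ and the order of $\chi$ enter exactly here, to guarantee the vanishing of the rational and twisted-rational homology that the argument needs.
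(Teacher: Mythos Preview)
The paper does not prove this theorem at all: it is quoted as a result of Casson and Gordon \cite{CG86} and used as a black box throughout Section~2 and the rest of the paper. So there is no ``paper's own proof'' to compare against.

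That said, your outline is essentially the classical Casson--Gordon argument. Taking the branched cover $V=\Sigma_n(D)$ of a slice disk, using the prime-power hypothesis on $n$ together with a transfer argument to see that $V$ is a $\Q$-homology ball, setting $M=\ker\bigl(H_1(\Sigma_n(K))\to H_1(V)\bigr)$, and invoking Poincar\'e--Lefschetz duality for the metabolizer properties is exactly the standard route. Your description of the last step is also on target: the prime-power hypothesis on the order of $\chi$ is what lets you extend $\chi$ over the (finite) group $H_1(V)$, and the $\Q$-homology-ball condition is what makes both the ordinary and the twisted intersection forms Witt-trivial, so that $\tau(K,n,\chi)$ vanishes in $L_0(\Q(\omega)(t))\otimes\Q$.

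Two points you should be aware of if you intend to flesh this out. First, since the statement is about \emph{topological} sliceness, the slice disk $D$ is only locally flat, and the construction of $\Sigma_n(D)$ and the homological arguments must be carried out in the topological category; this is standard but should be acknowledged. Second, your phrase ``signature defect'' undersells what is required: $\tau$ lives in a Witt group, not in $\Q$, and the vanishing argument must show that the twisted intersection form on $H_2^{\tilde\chi}(V;\Q(\omega)(t))$ is Witt-trivial, not merely that a numerical signature is zero. You gesture at this in your last paragraph, and it is indeed where the work lies, but it is not a gap so much as the part of the proof you have chosen (reasonably) to leave as a sketch.
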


While this is a powerful sliceness obstruction, $\tau(K,n,\chi)$ cannot generally be directly computed. Instead, as originated in \cite{CG86}, one relates the signature\footnote{Note that $\bar{\sigma}_1:L_0(\mathbb{Q}(\omega)(t))\otimes \mathbb{Q} \to \mathbb{Q}$ is defined to be the average of the left and right limits of the signature of a representative form at $t=1$.} $\bar{\sigma}_1(\tau(K,n,\chi))$
 to a simpler signature associated to any three-manifold $Y$ and character from $H_1(Y)$ to a cyclic group. We now give the definition of this signature, following \cite{CG78}.

First, whenever  $X_\chi \to X$ is a cyclic $d$-fold cover, perhaps branched, we let $\omega= e^\frac{2\pi i}{d}$ and define the $\chi$-twisted homology of $X$ to be the $\mathbb{Q}(\omega)$ vector space
$ H_*^{\chi}(X):= H_*(C_*(X_\chi) \otimes_{\mathbb{Z}[\mathbb{Z}_d]} \mathbb{Q}(\omega))
\cong H_*(X_\chi) \otimes_{\mathbb{Z}[\mathbb{Z}_d]} \mathbb{Q}(\omega).
$

 We now let $Y$ be a closed 3-manifold and $\chi: H_1(Y) \to \mathbb{Z}_d$ an onto homomorphism. 
The map $\chi$ induces a $d$-fold cyclic cover $Y_\chi \to Y$ with a canonical generator $\tau$ for the group of covering transformations. Suppose that there is some $d$-fold branched cyclic cover of 4-manifolds $W_\chi \to W$ with branch set a closed surface $F \subset \intt(W)$ such that $\partial(W_\chi \to W)= r(Y_\chi \to Y)$ for some $r \in \N$.  Suppose also that the covering transformation $\tilde{\tau}$ of $W_\chi$ that induces rotation by $\frac{2\pi}{d}$ on the fibers of the normal bundle of the pre-image of $F$ in $W_\chi$ induces the canonical covering transformation $\tau$ on $Y_\chi$. We can always choose either $F=\emptyset$ or $r=1$ by bordism group considerations and an explicit description  in \cite{CG78}, respectively, and all of our work will be in one of these cases.  
The action of $\tilde{\tau}$ on $H:=H_2(W_\chi, \mathbb{C})$ allows us to decompose $H$  as the direct sum of eigenspaces $H_2^k(W_\chi)$ corresponding to eigenvalues $\omega^k$ for $k=0,\dots, d-1$. For $k>0$, define $\epsilon_k(W_\chi)$ to be the signature of the intersection form of $W_\chi$ when restricted to $H_2^k(W_\chi)$.\footnote{Note that $\epsilon_1(W_\chi)$ can be equivalently be defined as the signature of the twisted intersection form on $H_2^{\chi}(W)= H_2(W_\chi) \otimes_{\mathbb{Z}[\mathbb{Z}_d]} \mathbb{Q}(\omega)$.}

\begin{defn}
With the above set up, the $k^{th}$ Casson-Gordon signature of $(Y, \chi)$ is
\[
\sigma_k(Y,\chi)=\frac{1}{r}\left( \sigma(W)- \epsilon_k(W_\chi)- \frac{2k(d-k)}{d^2}([F]\cdot [F]) \right)
\]
\end{defn}
 Those familiar with the definition of $\tau(K,n,\chi)$ should note that we generally have $\sigma_1(S^3_n(K), \chi) \neq \bar{\sigma_1}(\tau(K, n, \chi))$. However, we can bound the difference between $\sigma_1(\Sigma_n(K),\chi)$ and $\bar{\sigma}_1(\tau(K,n,\chi))$, in a straightforward extension of Theorem 3 of \cite{CG86}. 

\begin{thm}[\cite{CG86}] \label{cgsigrelation}
Let $\chi:H_1(\Sigma_n(K)) \to \mathbb{Z}_d$ be an onto homomorphism. Then \[|\sigma_1(\Sigma_n(K), \chi)- \bar{\sigma}_1(\tau(K, n, \chi))| \leq \dim H_1^{\chi}(\Sigma_n(K))+1.\]
\end{thm}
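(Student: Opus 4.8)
The plan is to realize both $\sigma_1(\Sigma_n(K),\chi)$ and $\bar{\sigma}_1(\tau(K,n,\chi))$ as signatures of intersection forms on bounding 4-manifolds and to show that the only discrepancy between them comes from the degeneration of the $t$-variable at $t=1$, which is controlled by the twisted homology of the boundary. First I would recall that $\tau(K,n,\chi)$ is the Witt class of the $\mathbb{Q}(\omega)(t)$-valued twisted intersection form of a 4-manifold $W$ bounding a multiple $r\,\Sigma_n(K)$, equipped with the $\mathbb{Z}_d \times \mathbb{Z}$-cover in which $\omega$ records $\chi$ and $t$ records the total linking number with the branch locus (equivalently, the meridional map on the $n$-fold cyclic cover of the knot complement). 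Its averaged signature $\bar{\sigma}_1(\tau)$ is by definition the mean of the two one-sided limits, as $t \to 1$, of the signature function of a representative form, after subtracting the untwisted signature and renormalizing by $r$. By contrast, $\sigma_1(\Sigma_n(K),\chi)$ is given by the Definition above through the $\omega$-eigenspace intersection form contributing $\epsilon_1(W_\chi)$ of a branched $\mathbb{Z}_d$-cover $W_\chi$.

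The key geometric input is that setting $t=1$ collapses the $\mathbb{Z}$-cover: the local system over $\mathbb{Q}(\omega)(t)$ specializes at $t=1$ to the $\mathbb{Q}(\omega)$ local system that computes $\epsilon_1(W_\chi)$, so that the value of the signature function at $t=1$ is exactly $\sigma_1(\Sigma_n(K),\chi)$ up to the correction terms $\sigma(W)$ and the branch-locus self-intersection $[F]\cdot[F]$ already built into both definitions. I would make this precise by gluing the two bounding 4-manifolds along their common boundary and applying Novikov additivity together with the Atiyah-Singer $G$-signature theorem to the resulting closed manifold; the characteristic-class contributions then cancel against the normalizing terms, leaving only a boundary contribution.

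Next I would bound that boundary contribution. The difference between the signature of the twisted form at a generic value of $t$ near $1$ and its value in the specialization at $t=1$ is governed, via the long exact sequence of the pair $(W,\partial W)$ and the half-lives-half-dies principle, by the jump in the rank of the twisted homology as $t \to 1$. Since $\Sigma_n(K)$ is a rational homology sphere, its twisted homology is concentrated (by Poincar\'e duality) in $H_1^{\chi}$ and the dual $H_2^{\chi}$, so this jump is accounted for by $H_1^{\chi}(\Sigma_n(K))$: the rank of $H_2(\,\cdot\,;\mathbb{Q}(\omega)(t))$ can increase by at most $\dim H_1^{\chi}(\Sigma_n(K))$ upon specialization, and each such increase changes the signature by at most that amount. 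Applying this to each one-sided limit $\sigma(1^{+})$ and $\sigma(1^{-})$ and averaging yields $|\sigma_1(\Sigma_n(K),\chi) - \bar{\sigma}_1(\tau(K,n,\chi))| \le \dim H_1^{\chi}(\Sigma_n(K))+1$, where the extra $+1$ arises from the meridional circle direction that is trivialized at $t=1$ but not seen by $H_1^{\chi}$, equivalently from averaging the two one-sided limits across the jump at $t=1$.

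The step I expect to be the main obstacle is the careful analysis of the limit $t \to 1$: proving that the one-sided limits of the signature function exist, identifying the specialized $\mathbb{Q}(\omega)$ form with the one computing $\epsilon_1(W_\chi)$, and verifying that the $G$-signature correction terms in the two definitions genuinely cancel so that only the rank jump of the boundary twisted homology survives. This is precisely where the argument extends Theorem~3 of \cite{CG86}: there the comparison is carried out in the setting tied to a slice-disk exterior, whereas here one must check that the same bookkeeping of correction terms and boundary ranks goes through for the general branched-cover 4-manifolds $W$ used to compute $\sigma_1$.
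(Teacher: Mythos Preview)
The paper does not give its own proof of this statement: it is stated as a theorem attributed to \cite{CG86}, with the remark that it is ``a straightforward extension of Theorem~3 of \cite{CG86}.'' There is therefore no paper proof to compare against directly.

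That said, your sketch is along the correct lines and essentially recapitulates the Casson--Gordon argument. The central idea---that the $\mathbb{Q}(\omega)(t)$-valued form defining $\tau$ specializes at $t=1$ to the $\mathbb{Q}(\omega)$-valued form whose signature is $\epsilon_1(W_\chi)$, and that the defect between the one-sided limits of the signature function and its value at $t=1$ is bounded by the nullity jump there---is exactly right. The nullity at $t=1$ is controlled by the image of $H_2^{\chi}(\partial W)$ in $H_2^{\chi}(W)$, which by Poincar\'e duality and the long exact sequence of the pair is bounded by $\dim H_1^{\chi}(\Sigma_n(K))$; the extra $+1$ does come from the meridional $\mathbb{Z}$-direction that becomes visible only before specialization.

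A few places are vaguer than they need to be. The invocation of ``Novikov additivity together with the Atiyah--Singer $G$-signature theorem'' on a glued closed manifold is not really how the Casson--Gordon argument proceeds: one works throughout with a single bounding $4$-manifold $W$ (the same $W$ that computes $\sigma_1$) and compares the $\mathbb{Q}(\omega)(t)$-twisted and $\mathbb{Q}(\omega)$-twisted intersection forms on it; the normalizing terms $\sigma(W)$ and $[F]\cdot[F]$ are identical in both computations and simply cancel. The substantive step is the linear-algebraic one you flag at the end: showing that for a Hermitian form over $\mathbb{Q}(\omega)(t)$, the difference between the averaged one-sided signatures at $t=1$ and the signature of the specialized form is at most the nullity of the specialization, and then identifying that nullity with $\dim H_1^{\chi}(\Sigma_n(K))+1$. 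If you tighten that part (and drop the unnecessary closed-manifold gluing), you have the proof.
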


Theorems~\ref{cgvanishingthm} and~\ref{cgsigrelation} immediately imply the following. 

\begin{cor}[\cite{CG86}]\label{mainobstructioncor}
Suppose that $K$ is a topologically slice knot and that $n=p^r$ is a prime power. 
Then there exists a metabolizer $M$ for the linking form on $H_1(\Sigma_n(K))$ such that if $\chi$ is any prime-power order character vanishing on $M$, then $|\sigma_1(\Sigma_n(K), \chi)| \leq \dim H_1^{\chi}(\Sigma_n(K))+1$. 
\end{cor}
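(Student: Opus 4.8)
The plan is to deduce this statement as a formal combination of the two preceding theorems, with no additional geometric input required beyond careful bookkeeping of hypotheses. First I would apply Theorem~\ref{cgvanishingthm}: since $K$ is topologically slice and $n = p^r$ is a prime power, it produces a square-root order subgroup $M \leq H_1(\Sigma_n(K))$, invariant under the covering transformations, that is a metabolizer for the linking form and has the property that $\tau(K,n,\chi) = 0$ for every prime-power order character $\chi$ with $\chi|_M = 0$. This $M$ is exactly the metabolizer claimed in the corollary.

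Next I would fix such a character $\chi$ and check that Theorem~\ref{cgsigrelation} applies. A character of prime-power order $d$ has image exactly $\mathbb{Z}_d$, so after identifying its image with $\mathbb{Z}_d$ we may regard $\chi : H_1(\Sigma_n(K)) \to \mathbb{Z}_d$ as an onto homomorphism, meeting the hypothesis of Theorem~\ref{cgsigrelation}. Since Theorem~\ref{cgvanishingthm} gives $\tau(K,n,\chi) = 0$, and $\bar{\sigma}_1$ is a homomorphism on the rationalized Witt group (so that $\bar{\sigma}_1(0) = 0$), the bound of Theorem~\ref{cgsigrelation} becomes
\[
|\sigma_1(\Sigma_n(K),\chi)| = |\sigma_1(\Sigma_n(K),\chi) - \bar{\sigma}_1(\tau(K,n,\chi))| \leq \dim H_1^{\chi}(\Sigma_n(K)) + 1,
\]
which is precisely the asserted inequality.

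I do not expect any genuine obstacle: the entire content of the corollary is already packaged in Theorems~\ref{cgvanishingthm} and~\ref{cgsigrelation}, and what remains is their routine juxtaposition. The only point requiring the slightest care is the bookkeeping around characters, namely confirming that a single prime-power order character vanishing on $M$ is simultaneously of the prime-power order demanded by Theorem~\ref{cgvanishingthm} and onto its cyclic image as demanded by Theorem~\ref{cgsigrelation}, together with the trivial observation that the signature homomorphism $\bar{\sigma}_1$ sends the zero Witt class to zero.
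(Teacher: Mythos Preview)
Your proposal is correct and matches the paper's approach exactly: the paper simply states that the corollary is an immediate consequence of Theorems~\ref{cgvanishingthm} and~\ref{cgsigrelation}, and your argument spells out precisely that juxtaposition. The only content beyond the two theorems is the trivial observation that $\bar{\sigma}_1(0)=0$, which you note.
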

Note that replacing $\chi$ with a nonzero multiple permutes the collection  $\{\sigma_k(Y,\chi)\}_{k=1}^{d-1}$, so the bound  of Corollary~\ref{mainobstructioncor} also holds for $\sigma_k(\Sigma_n(K),\chi)$.  If the obstruction of Corollary~\ref{mainobstructioncor} vanishes for characters from $H_1(\Sigma_2(K))$ to $\mathbb{Z}_d$, then we will refer to $K$ as \emph{CG-slice} at $d$.  The following proposition is often convenient in recognizing that $\Sigma_n(K)_\chi$ is a rational homology sphere, and hence that the bound of Corollary~\ref{mainobstructioncor} reduces to $|\sigma_1(\Sigma_n(K), \chi)| \leq 1$. 
 
\begin{prop}[\cite{CG78}] \label{qhs3recognition}
Suppose that $Y$ is a rational homology sphere with $H_1(Y, \mathbb{Z}_p)$ cyclic for some prime $p$. Then any cyclic $p^n$-fold cover of $Y$ is also a rational homology sphere. 
\end{prop}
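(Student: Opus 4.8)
The plan is to show directly that the first rational Betti number of a connected cyclic $p^n$-fold cover vanishes; combined with Poincar\'e duality (which gives $b_0=b_3=1$ and $b_1=b_2$ for the closed oriented $3$-manifold cover) this yields that the cover is a rational homology sphere. Write the cover $\tilde Y \to Y$ as the one determined by a surjection $\phi\colon \pi_1(Y) \to \mathbb{Z}_{p^n}$; since such a surjection factors through $H_1(Y)$ and hits $\mathbb{Z}_{p^n}$, the hypothesis that $H_1(Y;\mathbb{F}_p)$ is cyclic forces $\dim_{\mathbb{F}_p} H_1(Y;\mathbb{F}_p)=1$ (the case $\dim=0$ admitting no nontrivial connected cover). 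First I would decompose the rational homology of the cover into twisted pieces: by the usual identification of the homology of a cyclic cover with twisted homology of the base (Shapiro's lemma together with $\mathbb{Q}[\mathbb{Z}_{p^n}]\cong\bigoplus_{k=0}^{n}\mathbb{Q}(\zeta_{p^k})$), one has
\[
H_1(\tilde Y;\mathbb{Q}) \cong \bigoplus_{k=0}^{n} H_1\!\left(Y;\mathbb{Q}(\zeta_{p^k})_{\phi_k}\right),
\]
where $\phi_k$ is the composite character $\pi_1(Y)\xrightarrow{\phi}\mathbb{Z}_{p^n}\to\mathbb{Z}_{p^k}\hookrightarrow\mathbb{Q}(\zeta_{p^k})^\times$. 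The $k=0$ summand is $H_1(Y;\mathbb{Q})$, which vanishes because $Y$ is a rational homology sphere, so it remains to show that the $\mathbb{Q}(\zeta_{p^k})$-twisted homology vanishes for each $1\le k\le n$.

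The heart of the argument is a comparison of this characteristic-zero twisted homology with the untwisted mod-$p$ homology of $Y$. I would work over the Dedekind domain $\mathbb{Z}[\zeta_{p^k}]$, computing the homology of the cellular chain complex $C_* := C_*(Y;\mathbb{Z}[\zeta_{p^k}]_{\phi_k})$, and reduce modulo the unique prime $(\zeta_{p^k}-1)$ lying over $p$, whose residue field is $\mathbb{F}_p$. Since $\zeta_{p^k}\equiv 1$ modulo this prime, the local system becomes trivial after reduction, so $C_*\otimes\mathbb{F}_p$ is the ordinary $\mathbb{F}_p$-chain complex of $Y$. The universal coefficient sequence for this base change reads
\[
0 \to H_1(C_*)\otimes\mathbb{F}_p \to H_1(Y;\mathbb{F}_p) \to \mathrm{Tor}_1^{\mathbb{Z}[\zeta_{p^k}]}\!\left(H_0(C_*),\mathbb{F}_p\right) \to 0 .
\]
Because $\phi_k$ is a nontrivial character, $H_0(C_*)=\mathbb{Z}[\zeta_{p^k}]/(\zeta_{p^k}-1)\cong\mathbb{F}_p$, and the corresponding $\mathrm{Tor}_1$ term is again $\mathbb{F}_p$. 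Feeding in $\dim_{\mathbb{F}_p}H_1(Y;\mathbb{F}_p)=1$ then forces $H_1(C_*)\otimes\mathbb{F}_p=0$; by Nakayama over the localization at $(\zeta_{p^k}-1)$ this gives $H_1(C_*)_{(\zeta_{p^k}-1)}=0$, hence $\mathrm{rank}_{\mathbb{Z}[\zeta_{p^k}]}H_1(C_*)=0$ and $H_1(Y;\mathbb{Q}(\zeta_{p^k})_{\phi_k})=0$, as needed.

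The step I expect to be the main obstacle is precisely this last comparison. A naive attempt using only the transfer, the multiplicativity of the Euler characteristic, and Poincar\'e duality with twisted coefficients kills the trivial summand and yields the symmetry $b_1^{\phi_k}=b_2^{\phi_k}$, but it cannot see the twisted Betti numbers themselves, which are invisible to rational (co)invariants; such methods only bound the twisted $H_1$ by one rather than showing it is zero. The delicate point is the exact bookkeeping of the $\mathrm{Tor}_1$ contribution of $H_0$: this term is nonzero exactly because $\phi_k$ is a nontrivial character of $p$-power order, and it accounts for the entire one-dimensional mod-$p$ homology of $Y$, thereby pinning the twisted $H_1$ down to zero. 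Handling all the $p^k$ simultaneously in this way avoids any separate induction on the tower of $p$-fold subcovers and treats $p=2$ on the same footing as odd primes.
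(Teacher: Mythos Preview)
Your argument is correct. The paper does not actually prove this proposition; it is quoted as a result of Casson--Gordon \cite{CG78}, so there is no in-paper proof to compare against. Your route---decomposing $\mathbb{Q}[\mathbb{Z}_{p^n}]$ into cyclotomic fields and, for each nontrivial summand, reducing the twisted $\mathbb{Z}[\zeta_{p^k}]$-coefficient chain complex modulo the totally ramified prime $(\zeta_{p^k}-1)$ to compare with the untwisted $\mathbb{F}_p$-homology of $Y$---is clean and complete; the computation of $H_0(C_*)\cong\mathbb{F}_p$ and of the $\mathrm{Tor}_1$ term is right, and the Nakayama/localization step legitimately kills the rational rank. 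The only technical point worth making explicit is that the universal coefficient short exact sequence you invoke is valid over $\mathbb{Z}[\zeta_{p^k}]$ because Dedekind domains are hereditary, so the cycle submodules of the free cellular complex are projective.
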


In order to effectively apply this obstruction, we would like to  be able to compute $\sigma_k(Y, \chi)$ from an arbitrary integral surgery description of $Y$. 
 
\begin{defn}\label{twistcable}
Let $K$ be an oriented knot, and $A$ an embedded annulus such that $\partial A= K \sqcup -K'$ and $lk(K,K')=\lambda$. An \emph{$\lambda$-twisted $a$-cable} of $K$ is any oriented link $L$ obtained as the union of $n=n_+ + n_-$ parallel copies of $K$ in $A$ such that $n_+$ are oriented with $K$, $n_{-}$ opposite to $K$, and $n_{+}-n_{-}=a$. 
\end{defn}

Let $L= \bigcup_{i=1}^n L_i$ be an oriented link in $S^3$ such that surgery along $L$ with integer framings $\{\lambda_i\}_{i=1}^n$ gives $Y$. We refer to the meridian of component $L_i$ as $\mu_i$ and let $A= [a_{ij}]$ be the linking matrix of $L$. 
The following proposition is a generalization of Lemma 3.1 of \cite{CG78}. 
\begin{prop}[\cite{Gil81}] \label{computation1}
Let $Y$ be obtained by integer surgery on $L$ as above and $\chi: H_1(Y) \to \mathbb{Z}_d$ be an onto homomorphism.
Let $L_\chi$ be a satellite of $L$ obtained by replacing each $L_i$ by a non-empty $\lambda_i$-twisted $m_i$-cable of $L_i$, such that $\chi(\mu_i) \equiv m_i \mod d$.
Then for any $0<k<d$,
\[\sigma_k(Y,\chi)= \sigma(A)- \sigma_{L_\chi}(\omega^k) - \frac{2 k(d-k)}{d^2}\left(\sum_{i,j=1}^n m_i m_j a_{ij}  \right).\]
\end{prop}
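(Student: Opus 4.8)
The plan is to realize the $d$-fold cover $Y_\chi \to Y$ as the boundary of an explicit branched cover of $4$-manifolds and then evaluate the three terms in the definition of $\sigma_k(Y,\chi)$ directly. First I would let $W$ be the $4$-manifold obtained by attaching $2$-handles to $B^4$ along the framed link $(L, \{\lambda_i\})$. Then $\partial W = Y$, and since $H_2(W) \cong \Z^n$ is generated by the classes $\Sigma_i$ represented by a pushed-in Seifert surface for $L_i$ capped off with the core of the $i^{th}$ handle, the intersection form of $W$ is exactly the linking matrix $A$; in particular $\sigma(W) = \sigma(A)$.

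Next I would construct the branch surface $F$ so that it realizes the homology class $\sum_i m_i \Sigma_i$. Concretely, push a Seifert surface for the cable link $L_\chi$ into $\intt(B^4)$ and cap each family of cable strands off inside the $i^{th}$ handle by the $m_i$ (algebraically counted) parallel copies of the core that they bound; the result is a closed surface $F \subset \intt(W)$ with $[F] = \sum_i m_i \Sigma_i$, so that $[F]\cdot[F] = \sum_{i,j=1}^n m_i m_j a_{ij}$. Because the meridian $\mu_i \subset Y$ links $F$ algebraically $m_i$ times, the $\mathbb{Z}_d$-cover of $W$ branched over $F$ restricts on the boundary to precisely the cover determined by $\mu_i \mapsto m_i \equiv \chi(\mu_i)$, i.e.\ to $Y_\chi \to Y$ with $r = 1$. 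Substituting $\sigma(W) = \sigma(A)$ and the value of $[F]\cdot[F]$ into the definition then reduces the proposition to the single identity $\epsilon_k(W_\chi) = \sigma_{L_\chi}(\omega^k)$.

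Establishing this identity is the heart of the argument and the step I expect to be the main obstacle. The idea is to compute the intersection form of $W_\chi$ on its $\omega^k$-eigenspace by splitting $W$, via Novikov additivity, into a copy of $B^4$ containing the pushed-in Seifert surface and the union of the $2$-handles. Over the handle piece the eigenspace contribution is accounted for by the $\frac{2k(d-k)}{d^2}[F]\cdot[F]$ term, while over the $B^4$ piece one is reduced to the classical computation that the $\omega^k$-eigenspace signature of the cyclic cover of $B^4$ branched over a pushed-in Seifert surface for a link $J$ equals the Tristram--Levine signature $\sigma_J(\omega^k)$. Carrying this out amounts to identifying the relevant eigenspace intersection form with the Hermitianized Seifert form $(1-\omega^k)V + (1-\overline{\omega^k})V^{T}$ of $L_\chi$, and the delicate points are the sign conventions and the careful separation of the handle contributions from the Seifert-surface contributions. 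The single-component case is Lemma 3.1 of \cite{CG78}, and the content here is Gilmer's extension to arbitrary links, where the cross terms $m_i m_j a_{ij}$ with $i \neq j$ appear.
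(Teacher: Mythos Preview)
The paper does not give its own proof of this proposition; it is quoted from Gilmer \cite{Gil81} as a generalization of Lemma~3.1 of \cite{CG78}, and is used as a black box thereafter.  So there is no argument in the paper to compare your proposal against.

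That said, your outline is the standard Casson--Gordon/Gilmer argument and is essentially on track, with one muddled point.  You correctly reduce everything to the identity $\epsilon_k(W_\chi)=\sigma_{L_\chi}(\omega^k)$, but your explanation of why this holds is off: you say that ``over the handle piece the eigenspace contribution is accounted for by the $\tfrac{2k(d-k)}{d^2}[F]\cdot[F]$ term.''  That term is already present in the \emph{definition} of $\sigma_k(Y,\chi)$ as the $G$-signature correction for the branch locus; it is not what the handles contribute to $\epsilon_k$.  What you actually need in the Novikov-additivity step is that the $d$-fold branched cover of each $2$-handle $D^2\times D^2$ over the (algebraically $m_i$) parallel core disks has trivial intersection form on its $\omega^k$-eigenspace, so that $\epsilon_k(W_\chi)$ is computed entirely by the $B^4$-piece.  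There the branch set is a pushed-in Seifert surface for $L_\chi$, and the classical identification of the eigenspace form with $(1-\omega^k)V+(1-\overline{\omega^k})V^T$ gives $\sigma_{L_\chi}(\omega^k)$.  With that correction your sketch matches Gilmer's argument.
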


In order to effectively apply Proposition~\ref{computation1} we will need to compute the Tristram-Levine signatures of cables of links. The techniques of colored signatures prove useful for this, as well as providing an independent means of computation for $\sigma_1(Y,\chi)$.

\section{Colored signatures of colored links}

A \emph{$n$-colored link} is an oriented link $L$ together with a surjective map assigning to each component of $L$ a color in $\{1, 2, \dots, n\}$. We let $L_i$ denote the sublink of $L$ consisting of $i$-colored components, and call each $L_i$ a \emph{colored component}. 
A \emph{C-complex} for a colored link $L$ consists of a union of Seifert surfaces for the colored components of $L$ which intersect only in a prescribed way (in `clasps'- see \cite{CF08} for the precise definition).

The \emph{colored signature} of $L$ is a map $\sigma_L:(S^1)^{n} \to \mathbb{Z}$ that is defined via the C-complex in a way exactly analagous to the definition of the Tristram-Levine signatures in terms of a Seifert surface for a link.  The colored signature shares many properties, including a 4-dimensional interpretation, with the ordinary signatures.  
We need the following results, due primarily to \cite{CF08}:
\smallskip

\noindent {\bf Recovery of Tristram-Levine signatures:}
Let $L$ be a $n$-component, $n$-colored link, and call the underlying ordinary link $L'$. 
 Then  for any $\omega~\in~S^1-~\{1\}$,
$\sigma_L(\omega, \dots, \omega)= \sigma_{L'}(\omega)+ \sum_{i<j} lk(L_i, L_j).$
\smallskip

\noindent {\bf Additivity:} Let $L'= L'_1 \cup \dots \cup L'_m$ and $L''=L''_{m+1} \cup \dots \cup L''_{m+n}$ be colored links and $L$ be the $(m+n-1)$-colored link obtained by connected summing any component of $L'_m$ with any component of $L''_{m+1}$. 
Then
$\sigma_L(\omega_1, \dots, \omega_{m}, \dots, \omega_{m+n-1})=
\sigma_{L}(\omega_1, \dots, \omega_{m})+ \sigma_{L''}(\omega_m, \dots, \omega_{m+n-1}).
$
\smallskip

\noindent{\bf Behavior under reversal and mirroring:}
The colored signature is invariant under global reversal of orientations. Also, letting $\bar{L}$ denote the mirror of $L$ we have 
$\sigma_{\bar{L}}(\omega_1, \dots \omega_n)= -\sigma_{L}(\omega_1, \dots, \omega_n).$
\smallskip

\noindent{\bf Behavior at 1}: (\cite{DFL14})
Let $L$ be an $n$-colored link and $L'$ be the $(n-1)$-colored link obtained by deleting the $n$th colored component of $L$. Then $\sigma_L(\omega_1, \dots, \omega_{n-1}, 1)= \sigma_{L'}(\omega_1, \dots, \omega_{n-1}).$
\smallskip

\noindent{\bf Hopf link computation:} Let $L$ be either Hopf link, considered as a 2-colored link. Then the colored signature function of $L$ is identically 0. 
\smallskip

We also need the following consequence of Degtyarev, Florens, and Lecuona's general description of the signature of a splice in \cite{DFL14}. 

\begin{exl}\label{coloredexample}
Let $L$ be the link shown in Figure~\ref{splicepicture}
\begin{figure}[h]
\includegraphics[height=3cm]{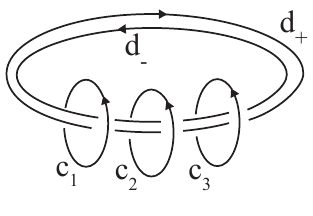}
\caption{A 5-colored link $L$}
 \label{splicepicture}
\end{figure}
and $\Phi(L)$ be the satellite of $L$ obtained by replacing each component $c_i$ with a coherently oriented torus link $T(a_i, p_i a_i)$ for $i=1,2,3$. 
Observe that as an ordinary oriented link, $L$ is isotopic to its mirror image in a way that swaps components $d_+$ and $d_-$ and preserves all other components. It follows that $\sigma_L(\omega_0, \omega_0, \vec{\omega})=0$ for all $\omega_0 \in S^1$ and $\vec{\omega}\in (S^1)^3$.
Let $\theta \in S^1$ be such that $\theta^{a_i} \neq 1$ for $i=1,2,3$. 
Then Theorem~2.2 of \cite{DFL14} and the above results imply that
 $\sigma_{\Phi(L)}(\theta)=\sum_{i=1}^3 \sigma_{T(a_i,p_i a_i)}(\theta)$ 
\end{exl}

Finally, in some cases colored signatures give us an alternate computational method for Casson-Gordon signatures.\footnote{Note that in the case that every meridian is sent to 1 and $k=1$, Theorems~\ref{computation1} and~\ref{coloredlinks} agree both with each other and with the original Lemma~3.1 of \cite{CG78}.
} 

\begin{thm}[\cite{CF08}]\label{coloredlinks}
Let $Y$ be a 3-manifold obtained by surgery on a framed $n$-component link $L$ with linking matrix $A=[a_{ij}]$.
Let $\chi: H_1(Y) \to \mathbb{Z}_d$ be a character of prime-power order that takes the meridian of each component of $L$ to a unit in $\mathbb{Z}_d$. Denote the lift of the image of the $i^{th}$ meridian of $L$ to $\{1, \dots, d-1\}$ by $m_i$. 
Consider $L$ as a $n$-colored link, and let  $\omega_{\chi}=(\omega^{m_1}, \dots, \omega^{m_n})$. 
Then 
\[ \sigma_1(Y, \chi)=
\sigma(A)
 -\left(\sigma_L(\omega_\chi)
-\sum_{i<j} a_{ij}\right) - 
\frac{2}{d^2}\left( \sum_{i,j}(d-m_i) m_j a_{ij}\right).\]
\end{thm}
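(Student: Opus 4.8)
The plan is to deduce the formula from Gilmer's surgery formula (Proposition~\ref{computation1}) specialized to $k=1$, using the recovery of Tristram-Levine signatures together with the splice formula of \cite{DFL14} to rewrite the one-variable signature appearing there as the colored signature $\sigma_L(\omega_\chi)$. Throughout write $a_{ii}=\lambda_i$ for the framings and $a_{ij}=lk(L_i,L_j)$ for $i\neq j$, and set $Q=\sum_{i,j}m_im_ja_{ij}$ and $M=\sum_{i,j}m_ja_{ij}$. Since $\chi$ must be well defined on $H_1(Y)=\Z^n/A\Z^n$, each row relation forces $(A\vec m)_i=\sum_j a_{ij}m_j\equiv 0\pmod d$; write $(A\vec m)_i=d\,k_i$ with $k_i\in\Z$. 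Proposition~\ref{computation1} at $k=1$ reads $\sigma_1(Y,\chi)=\sigma(A)-\sigma_{L_\chi}(\omega)-\tfrac{2(d-1)}{d^2}Q$, where $L_\chi$ is the coherently oriented $\lambda_i$-twisted $m_i$-cable of $L$. The whole statement therefore reduces to the cabling identity
\[
\sigma_{L_\chi}(\omega)=\sigma_L(\omega_\chi)-\sum_{i<j}a_{ij}-\tfrac{2}{d}(Q-M),
\]
after which the theorem follows by substitution.

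I would prove this cabling identity in two moves. First, color $L_\chi$ by assigning color $i$ to the $m_i$ strands cabling $L_i$, producing an $n$-colored link $\widehat L_\chi$ whose underlying ordinary link is $L_\chi$; since the $i$- and $j$-colored sublinks link with total multiplicity $m_im_ja_{ij}$, the general form of the recovery formula gives $\sigma_{L_\chi}(\omega)=\sigma_{\widehat L_\chi}(\omega,\dots,\omega)-\sum_{i<j}m_im_ja_{ij}$. Second, I would compute $\sigma_{\widehat L_\chi}(\omega,\dots,\omega)$ from $\sigma_L$ by regarding the coherently oriented cabling as a sequence of splices inserting the torus links $T(m_i,\lambda_im_i)$ along the components of $L$. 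Theorem~2.2 of \cite{DFL14}, applied as in Example~\ref{coloredexample}, then writes the colored signature of this splice as the colored signature of the base link with its $i$th variable raised to the $m_i$th power, namely $\sigma_L(\omega^{m_1},\dots,\omega^{m_n})=\sigma_L(\omega_\chi)$, plus the inserted torus-link signatures $\sum_i\sigma_{T(m_i,\lambda_im_i)}(\omega)$ and the intercolor linking corrections generated by the splicing. Combining this with the recovery step and verifying that all these terms collect into $-\sum_{i<j}a_{ij}-\tfrac{2}{d}(Q-M)$ yields the identity; the algebraic splitting $Q-M=\sum_i m_i(m_i-1)\lambda_i+\sum_{i<j}(2m_im_j-m_i-m_j)a_{ij}$ exhibits how the self-framing (torus-link) and inter-component (linking) contributions separate.

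The remaining step is bookkeeping. From $(A\vec m)_i=d\,k_i$ one gets $Q=d\sum_i m_ik_i$ and $M=d\sum_i k_i$, so the apparently fractional correction $\tfrac{2}{d}(Q-M)=2\sum_i(m_i-1)k_i$ is in fact an integer, consistent with the integrality of the two signatures being compared. Substituting the cabling identity into Gilmer's formula and combining the quadratic contributions via
\[
\tfrac{2}{d}(Q-M)-\tfrac{2(d-1)}{d^2}Q=-\tfrac{2}{d^2}(dM-Q)=-\tfrac{2}{d^2}\sum_{i,j}(d-m_i)m_ja_{ij}
\]
produces precisely the asserted expression. As a sanity check, when every $m_i=1$ the cable $L_\chi$ is $L$ itself and both correction types vanish, so the identity degenerates to the recovery formula $\sigma_L(\omega,\dots,\omega)=\sigma_{L'}(\omega)+\sum_{i<j}a_{ij}$, recovering Lemma~3.1 of \cite{CG78} as the footnote promises.

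The main obstacle is the second move above: confirming that coherently oriented cabling really implements the substitution $\omega_i\mapsto\omega_i^{m_i}$ on the colored signature (this is exactly where coherent, rather than arbitrary, orientation of the cable is essential) and pinning down the torus-link and linking corrections precisely enough that they collapse into the single term $\tfrac{2}{d}(Q-M)$. Controlling signs and the behavior at the branch values, where one needs $\omega^{m_i}\neq 1$ as in the hypothesis of Example~\ref{coloredexample} (here automatic, since each $m_i$ is a unit mod $d$), is the delicate point. Should the splice computation prove too awkward, an alternative is to bypass Gilmer's formula and argue four-dimensionally: push a C-complex for $L$ with multiplicities $m_i$ into the $2$-handlebody $W$ bounded by $Y$, identify the $\omega$-eigenspace signature $\epsilon_1(W_\chi)$ of the resulting branched cover with $\sigma_L(\omega_\chi)$ and the branch-surface self-intersection $[F]\cdot[F]$ with $Q$, and read the formula directly off the definition of $\sigma_1(Y,\chi)$.
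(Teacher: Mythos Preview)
The paper does not give its own proof of this statement: Theorem~\ref{coloredlinks} is quoted from \cite{CF08} (Cimasoni--Florens) and used as a black box, so there is nothing in the paper to compare your argument against. What you have written is therefore an attempt to \emph{rederive} a result that the author simply imports.

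As for the content of your sketch: the reduction to the cabling identity
\[
\sigma_{L_\chi}(\omega)=\sigma_L(\omega_\chi)-\sum_{i<j}a_{ij}-\tfrac{2}{d}(Q-M)
\]
is algebraically correct, and your observation that $Q-M$ is divisible by $d$ (via $(A\vec m)_i\equiv 0\bmod d$) is a nice consistency check. However, you yourself flag the real gap: the ``second move'' is not carried out. Invoking the splice theorem of \cite{DFL14} ``as in Example~\ref{coloredexample}'' is not quite legitimate here, since that example treats a very special link whose colored signature happens to vanish identically, whereas for a general $L$ the correction terms in the splice formula (the defect terms, the linking corrections, and the hypotheses on the variables lying off the discriminant) must all be tracked. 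You assert that these collapse to $-\tfrac{2}{d}(Q-M)$ but do not verify it, and this is exactly the heart of the matter. So as written this is a plausible outline, not a proof.

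For what it is worth, the original argument in \cite{CF08} proceeds along your ``alternative'' route: it is a direct four-dimensional computation identifying $\epsilon_1(W_\chi)$ with the colored signature via a bounding $C$-complex pushed into the trace of surgery, rather than a deduction from Gilmer's formula. If you want a self-contained proof, that is the cleaner path; deriving it from Proposition~\ref{computation1} plus the full splice machinery is possible in principle but circuitous.
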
 

\section{Casson-Gordon signatures of 3-strand pretzels}\label{thework}

We now give the outline of the proof of Theorem~\ref{maintheorem}, deferring computations to later propositions. 

\begin{proof}[Proof of Theorem~\ref{maintheorem}]
Suppose that $K$ is an algebraically slice odd 3-strand pretzel knot with nontrivial Alexander polynomial. We will argue that either the Casson-Gordon signatures of $\Sigma_2(K)$ obstruct $K$'s topological sliceness or that $K$ is in fact ribbon. Since  $K$ is algebraically slice, the ordinary signature of $K$ vanishes, and so $pq+qr+pr<0$.
Also, $|H_1(\Sigma_2(K))|=-pq-qr-pr=D^2$ for some odd $D \in \mathbb{N}$. 
Note that since $K$ is a genus one algebraically slice knot with nontrivial Alexander polynomial, $D^2 \neq 1$ and hence $D$ has prime divisors.
Since $pq+pr+qr<0$, the parameters $p,q,$ and $r$ are not all of the same sign and so via reflection and the symmetries of 3-strand pretzel knots we can assume that $p,q>0$ and $r<0$.

In the following cases, the existence of a prime $d$ that satisfies the given conditions implies that the Casson-Gordon signatures of $\Sigma_2(K)$ corresponding to characters to $\mathbb{Z}_d$ obstruct $K$'s topological sliceness: 
\begin{enumerate}
\item $d$ divides $p$ and $q$ but not $r$: Proposition~\ref{dividespq}.
\item $d$ divides $r$ and exactly one of $p$ and $q$: Proposition~\ref{ddividespr}. 
\item $d$ divides all of $p$,$q$, and $r$: Proposition~\ref{dividespqr}
\item $d$ divides $D$ but none of $p$,$q$, and $r$; $p \not \equiv q \mod d$; and (assuming without loss of generality that $q>p$) $r \neq -(4p+q)$: Proposition~\ref{generaldneqdividepqr}. 
\item $d$ divides $D$ but none of $p$, $q$, and $r=-(4p+q)$: Proposition~\ref{exceptionalprop}. 
\item  $d$ divides $D$ but none of $p$,$q$, and $r$; $p\equiv q$ mod $d$; and $d \neq 3$: Proposition~\ref{chipischiqprop}.
\end{enumerate}

Now suppose that there is no prime satisfying any of the above. It follows that $p,q,$ and $r$ are relatively prime, $p \equiv q \mod 3$,  and $D$ is a power of three. We show that in this case the Casson-Gordon signatures corresponding to characters of order 3 and 9 obstruct topological sliceness in Proposition~\ref{powerof3}. 
\end{proof}

We now set up for our various computation. 
Since $K$ is  not ribbon, we have $r \neq -p,-q$.    We start with the surgery diagram for $\Sigma_2(K)$ given in Figure~\ref{surgerydiagram}, with linking matrix
$A= \left[ 
\begin{array}{cccc}
 0&1&1&1 \\
1&p&0&0 \\
1&0&q&0 \\
1&0&0&r
\end{array}
\right]$ and $\sigma(A)=0$. 
 We refer to the meridians of each component by $\mu_0, \mu_p, \mu_q,$ and $\mu_r$ according to their framings.
\begin{figure}[h]
\includegraphics[height=2cm]{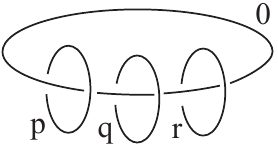}
\caption{A surgery diagram $L_0$ for $\Sigma_2(P(p,q,r))$.}
 \label{surgerydiagram}
\end{figure}
 
 Now let $d$ be any prime dividing $D$ and note that when $d$ does not divide all of $p,q,$ and $r$ we can easily check that $H_1(\Sigma_2(K), \mathbb{Z}_d)$ is cyclic and so every regular $d^n$-fold cyclic cover of $\Sigma_2(K)$ is a rational homology sphere (Proposition~\ref{qhs3recognition}). In addition, when $H_1(\Sigma_2(K),\mathbb{Z}_d)$ is cyclic any character $\chi: H_1(\Sigma_2(K)) \to \mathbb{Z}_d$ will vanish on any metabolizer for the linking form. (See Lemma 8.2 of \cite{HKL10}.) 
 So we have the following:
\newline
\noindent{\bf Useful Fact:} Suppose that $K=P(p,q,r)$ is topologically slice, $d$ is a prime dividing $pq+qr+pr$ that does not divide all of $p$, $q$, and $r$, and $\chi$ is any character $H_1(\Sigma_2(K)) \to \mathbb{Z}_d$. Then $|\sigma_1(\Sigma_2(K),\chi)|\leq 1$.

\subsection{Cases 1 and 2: $d$ divides some but not all of $p,q$ and $r$.}

\begin{prop}[Case 1]\label{dividespq}
Let $K=K(p,q,r)$ as above. Suppose that $d$ is a prime that divides $p$ and $q$ but not $r$. Then the Casson-Gordon signatures of $\Sigma_2(K)$ associated to characters to $\mathbb{Z}_d$ obstruct $K$'s topological sliceness. 
\end{prop}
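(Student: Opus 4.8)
The plan is to produce a single onto character $\chi\colon H_1(\Sigma_2(K))\to\mathbb{Z}_d$ for which $|\sigma_1(\Sigma_2(K),\chi)|>1$; by the Useful Fact this is impossible once $K$ is topologically slice, so it obstructs sliceness. First I would read off the relevant characters from the linking matrix $A$. The four surgery relations are $\mu_p+\mu_q+\mu_r=0$ and $\mu_0=-p\mu_p=-q\mu_q=-r\mu_r$; reducing modulo $d$ and using $d\mid p$, $d\mid q$, $d\nmid r$ (so that $r$ is a unit), they force $\mu_0=\mu_r=0$ and $\mu_q=-\mu_p$. Thus $H_1(\Sigma_2(K);\mathbb{Z}_d)\cong\mathbb{Z}_d$ is cyclic, generated by $\mu_p$, and the onto characters are exactly $\chi_a$ with $\chi_a(\mu_p)=a$, $\chi_a(\mu_q)=-a$, $\chi_a(\mu_0)=\chi_a(\mu_r)=0$ for $a\in\mathbb{Z}_d^{\times}$. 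As recorded in the Useful Fact, cyclicity guarantees both that each such character vanishes on a metabolizer and that the covers involved are rational homology spheres, so it suffices to exhibit one onto character violating $|\sigma_1|\leq 1$.

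The key point is that $\chi_1$ admits an essentially computation-free evaluation via Proposition~\ref{computation1}. I would apply that proposition to the surgery link of Figure~\ref{surgerydiagram} using the integer lifts $m_p=1$, $m_q=-1$, $m_0=m_r=0$. Then $L_{\chi_1}$ is obtained by replacing $L_p$ and $L_q$ by single $(\pm1)$-strand cables (each still an unknot) and $L_0$, $L_r$ by nonempty $0$-cables. Matching this to the colored link of Example~\ref{coloredexample}---the central component $L_0$ in the role of the pair $d_\pm$, the three legs in the role of $c_1,c_2,c_3$---the central contribution vanishes by the mirror-symmetry observation there, the two unknotted legs contribute $\sigma_{T(1,p)}(\omega)=\sigma_{T(1,-q)}(\omega)=0$, and the leg $L_r$, whose meridian is sent to $0$, drops out by the behavior-at-$1$ property of colored signatures. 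Hence $\sigma_{L_{\chi_1}}(\omega)=0$. Since $\sigma(A)=0$ and $\sum_{i,j}m_im_ja_{ij}=m_p^2p+m_q^2q=p+q$, Proposition~\ref{computation1} yields
\[
\sigma_1(\Sigma_2(K),\chi_1)=-\frac{2(d-1)}{d^2}(p+q).
\]
As a consistency check, the positive lift $m_q=d-1$ replaces $L_q$ by the torus link $T(d-1,q(d-1))$, and the identity $\sigma_{T(d-1,q(d-1))}(\omega)=-\tfrac{2(d-1)(d-2)}{d}q$ (valid because $d\mid q$) returns the same value.

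It remains to estimate. Since $d\mid p$ and $d\mid q$ with $p,q>0$ we have $p,q\geq d$, so $p+q\geq 2d$ and
\[
|\sigma_1(\Sigma_2(K),\chi_1)|=\frac{2(d-1)(p+q)}{d^2}\geq\frac{4(d-1)}{d}=4-\frac{4}{d}>1
\]
for every prime $d$. This contradicts the Useful Fact, so $K$ is not topologically slice, which is the assertion of the proposition.

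I expect the main obstacle to be the honest verification that $\sigma_{L_{\chi_1}}(\omega)=0$. The legs $L_p$ and $L_q$ are unknots, but inside $L_{\chi_1}$ they are linked with the two $0$-cables of $L_0$ and $L_r$, so this vanishing is not visible componentwise: it requires the colored-signature bookkeeping of Example~\ref{coloredexample} together with the behavior-at-$1$ property to confirm that both the central pair and the $L_r$-leg genuinely contribute nothing (and, with the lift $m_q=-1$, a harmless appeal to reversal invariance for the reversed strand). Everything else---the homology computation, the recognition of the cabled legs as unknots, the framing-defect arithmetic, and the final inequality---is routine.
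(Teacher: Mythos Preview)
Your overall strategy is sound, and the homology and framing-defect computations are correct, but the central claim $\sigma_{L_{\chi_1}}(\omega)=0$ is wrong: the correct value is $-1$. The paper avoids this issue entirely by first sliding the $p$- and $q$-framed curves over the $r$-framed curve and cancelling, reducing to a \emph{two}-component surgery link $T(2,2r)$ with framings $p+r,\,q+r$; in that picture neither meridian is sent to $0$, so no $0$-cables arise, and the only link signature needed is that of $T(2,2r)$ with its strands oppositely oriented, which is $-1$. This yields $\sigma_k=1-\tfrac{2k(d-k)}{d^2}(p+q)$, differing from your formula by exactly $1$.

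The error is in your treatment of the $L_r$ leg. The behavior-at-$1$ property lets you delete a colored component when its \emph{variable} is set to $1$ in a colored signature; it does not say that a physical $0$-cable has no effect on a Tristram--Levine signature evaluated at $\omega\neq 1$. Concretely, the $r$-framed $0$-cable of the unknot $L_r$ (with $n_+=n_-=1$) is two oppositely oriented parallel strands cobounding the framing annulus; this annulus is a Seifert surface with $1\times 1$ Seifert matrix $[r]$, so its Tristram--Levine signature at any $\omega\neq 1$ is $\operatorname{sign}(r)=-1$. (Contrast the $0$-framed $0$-cable of $L_0$, which really is a two-component unlink and contributes $0$.) Indeed this sublink is isotopic to the very link $T(2,2r)$-with-one-strand-reversed that the paper ends up with. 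Your ``consistency check'' with $m_q=d-1$ does not detect the problem because it leaves the $L_r$ leg unchanged. Also note that Example~\ref{coloredexample} explicitly requires $\theta^{a_i}\neq 1$, so its additivity statement does not literally cover an $m_r=0$ leg; making your decomposition rigorous would require going back into the splice formula and tracking the pattern term for the $0$-cable, which is precisely the $-1$ you dropped. With the corrected value your inequality still goes through, since $1-\tfrac{2(d-1)(p+q)}{d^2}\le 1-\tfrac{4(d-1)}{d}\le -\tfrac{5}{3}$, but the paper's handle-slide gets there with far less bookkeeping.
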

\begin{proof}
We start by manipulating our surgery description for $\Sigma_2(K)$. Slide the curves with framing $p$ and $q$ over the curve with framing $r$. Then convert the $0$-framed 2-handle to a 1-handle, and cancel the 1-handle with the $r$-framed 2-handle.  We end with a new surgery description for $\Sigma_2(K)$ with underlying link $L= T(2,2r)$ and framings $p+r$ and $q+r$. The linking matrix of $L$ is $A= \left[ \begin{array}{cc} p+r & r \\ r & q+r\end{array} \right]$ and has $\sigma(A)=0$. Note that if we consider the entries of $A$ mod $d$ we get a presentation matrix for $H_1(\Sigma_2(K), \mathbb{Z}_d)$ with respect to basis $\{\mu_p, \mu_q\}$ which  immediately implies that $H_1(\Sigma_2(K), \mathbb{Z}_d) \cong \mathbb{Z}_d$, with generator $\mu_p=-\mu_q$.

By our useful fact, it suffices to show that for some $\chi: H_1(\Sigma_2(K)) \to \mathbb{Z}_d$ we have that $|\sigma_1(\Sigma_2(K),\chi)|>1$. Define $\chi$ on $H_1(\Sigma_2(K))$ by $\chi(\mu_p)=\chi(-\mu_q)=1$. So $L_\chi$ is the torus link $T(2, 2r)$ with strands oppositely oriented. Note that $\sigma_{\omega}(L_\chi)= -1$ and so we have by Proposition~\ref{computation1} that 
\[
\sigma_k(\Sigma_2(K), \chi)= 1-2((p+r)-2r+ (q+r))\frac{k(d-k)}{d^2}= 1-2\left(\frac{p+q}{d}\right)\left(\frac{k(d-k)}{d}\right)
\]
Note that $d$ divides $p$ and $q$, so $p+q\geq 2d$. Note that $k(d-k)\geq (d-1)$ for all choices of $k=1, \dots, d-1$. Since $d \geq 3$, we have
\[
| \sigma_k(\Sigma_2(K), \chi)| \geq 2 \cdot 2 \cdot \left(1-\frac{1}{3}\right)-1= \frac{8}{3}-1>1.\qedhere
\]
\end{proof}

Note that the above proof shows that $\sigma_k(\Sigma_2(K),\chi)<-1$ for all choices of $\chi:H_1(\Sigma_2(K))\to \mathbb{Z}_d$ and $k=1, \dots, d$, giving the following easy corollary.

\begin{cor}\label{subgroup}
For each odd prime $s$, let $K_s=P(p_s, q_s, r_s)$ be an odd 3-strand pretzel knot such that $p_s, q_s>0$ are divisible by $s$; $r_s<0$ is not divisible by $s$; and $p_sq_s+p_sr_s+q_sr_s=-s^2$. 
Then $\{K_s\}$ is a basis of algebraically slice knots for a $\mathbb{Z}^{\infty}$ subgroup of the topological concordance group. 
\end{cor}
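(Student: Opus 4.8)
The plan is to prove two separate things: that each $K_s$ is algebraically slice, and that no nontrivial integral linear combination $J=\#_i n_i K_{s_i}$ is topologically slice. The first is essentially a determinant computation. Since $p_sq_s+q_sr_s+p_sr_s=-s^2$, the knot $K_s$ has genus one and determinant $s^2$, so its symmetrized Seifert form has discriminant the perfect square $s^2$ and hence represents zero over $\mathbb{Z}$; equivalently, $\Delta_{K_s}(t)\doteq\frac14\left[-s^2(t-1)^2+(t+1)^2\right]$ admits a Fox--Milnor factorization $f(t)f(t^{-1})$. Thus each $K_s$ is algebraically slice, and the substance of the corollary is the topological linear independence.

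For independence, I would suppose $J=\#_{i=1}^N n_i K_{s_i}$ is topologically slice with some $n_j\neq 0$, set $s=s_j$ and $m=|n_j|$, and seek a contradiction with Corollary~\ref{mainobstructioncor}. The starting point is that $H_1(\Sigma_2(K_s))\cong\mathbb{Z}_{s^2}$ is cyclic, since by the useful fact (applied to $d=s$) we have $H_1(\Sigma_2(K_s),\mathbb{Z}_s)\cong\mathbb{Z}_s$ and $|H_1(\Sigma_2(K_s))|=s^2$. Hence $H_1(\Sigma_2(J))\cong\bigoplus_i(\mathbb{Z}_{s_i^2})^{|n_i|}$, with linking form the orthogonal sum, and because the $s_i$ are distinct primes the $s$-primary part is exactly the $(\mathbb{Z}_{s^2})^m$ coming from the $m$ copies of $\pm K_s$. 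Any metabolizer $M$ supplied by Corollary~\ref{mainobstructioncor} splits along primary components, so its $s$-part $M_j\le(\mathbb{Z}_{s^2})^m$ has order $s^m$. A short argument then shows $M_j$ must meet the $s$-torsion $s\cdot(\mathbb{Z}_{s^2})^m$ nontrivially: an order-$s^m$ subgroup injecting into $(\mathbb{Z}_s)^m$ would consist entirely of $s$-torsion, hence lie in $s\cdot(\mathbb{Z}_{s^2})^m$, contradicting the injection. Consequently the reduction $\bar M_j\le(\mathbb{Z}_s)^m$ has dimension $<m$, and there is a nonzero character $\chi\colon H_1(\Sigma_2(J))\to\mathbb{Z}_s$, supported on the $s$-primary part and vanishing on $M$.

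With such a $\chi$ fixed, let $t\ge 1$ be the number of the $m$ copies on which $\chi$ restricts nontrivially, and play the definite-sign computation of Proposition~\ref{dividespq} against the homological bound. By additivity of the Casson--Gordon signature under connected sum, together with $\sigma_1(-Y,\chi)=-\sigma_1(Y,\chi)$ and $\Sigma_2(-K_s)=-\Sigma_2(K_s)$, the value $\sigma_1(\Sigma_2(J),\chi)$ equals $\sign(n_j)$ times a sum of the $t$ nonzero contributions $\sigma_1(\Sigma_2(K_s),\chi_l)$. The computation in Proposition~\ref{dividespq} and the remark following it show every such contribution is $\le-\tfrac53$, and since all $m$ copies share the sign of $n_j$ there is no cancellation, giving $|\sigma_1(\Sigma_2(J),\chi)|\ge\tfrac53 t$. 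On the other hand each $\Sigma_2(K_s)_{\chi_l}$ is a rational homology sphere (Proposition~\ref{qhs3recognition}), so a Mayer--Vietoris computation on the branched covers gives $\dim H_1^{\chi}(\Sigma_2(J))=t-1$: the $t$ nontrivially-covered summands are joined in a chain, each join contributing one $\mathbb{Q}(\omega)$, while the trivially-covered rational homology sphere summands contribute nothing. Corollary~\ref{mainobstructioncor} then forces $|\sigma_1(\Sigma_2(J),\chi)|\le(t-1)+1=t$, contradicting $\tfrac53 t>t$. Hence no such $J$ is slice, so the $\{K_s\}$ are linearly independent and generate a $\mathbb{Z}^{\infty}$ subgroup.

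The main obstacle is the bookkeeping in the final paragraph: verifying additivity of the Casson--Gordon signature under connected sum with the stated sign behaviour, and--more delicately--pinning down $\dim H_1^{\chi}(\Sigma_2(J))=t-1$ rather than something larger, since the whole argument depends on the signature growing like $\tfrac53 t$ while the homological bound grows only like $t$. Establishing that each connected-sum join of two nontrivially-covered rational homology sphere summands raises $\dim H_1^{\chi}$ by exactly one, while trivially-covered summands raise it by zero, is the crux; the comfortable margin between $\tfrac53 t$ and $t$ is precisely what lets the obstruction survive for arbitrarily many simultaneous summands, as an infinite-rank statement requires.
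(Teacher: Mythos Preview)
Your argument is essentially correct, but it takes a harder route than the paper's and leaves the hardest step (the Mayer--Vietoris computation of $\dim H_1^{\chi}(\Sigma_2(J))$) as the acknowledged ``crux'' rather than a completed lemma.

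The paper avoids that computation entirely by working one level up, with the Witt-class invariant $\bar{\sigma}_1(\tau(K,2,\chi))$ rather than the 3-manifold signature $\sigma_1(\Sigma_2(K),\chi)$. The point is that $\bar{\sigma}_1\circ\tau$ is genuinely additive under connected sum and, by Theorem~\ref{cgvanishingthm}, must \emph{vanish exactly} for $\chi$ vanishing on a metabolizer. One then decomposes $\chi=\bigoplus_j\chi_j$ over the $|n_j|$ copies of $K_{s}$ (the character is automatically trivial on the other primary summands), and applies the bound $|\bar{\sigma}_1(\tau(K_s,2,\chi_j))-\sigma_1(\Sigma_2(K_s),\chi_j)|\le 1$ to each \emph{individual} summand, where $H_1^{\chi_j}(\Sigma_2(K_s))=0$ is already known. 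Since each nontrivial $\chi_j$ gives $\sigma_1(\Sigma_2(K_s),\chi_j)<-1$, each $\bar{\sigma}_1(\tau(K_s,2,\chi_j))$ is strictly negative, and the sum cannot be zero. No global $H_1^{\chi}$ estimate for the connected sum is ever needed.

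What your approach buys is that it stays entirely within the computable 3-manifold signatures and never invokes $\tau$; what it costs is the Mayer--Vietoris bookkeeping you flag. That computation is correct (for a connected sum of rational homology spheres with $t$ nontrivially covered pieces one does get $\dim H_1^{\chi}=t-1$), but it is real work, whereas the paper's device of passing to $\bar{\sigma}_1(\tau)$ and then back on each summand separately makes the proof a few lines. Your metabolizer analysis is also more elaborate than necessary: since the $s$-primary part of $H_1(\Sigma_2(J))$ has order $s^{2m}$ and $M$ has order $s^m$ there, the quotient has nontrivial $s$-torsion, so a nontrivial $\mathbb{Z}_s$-character vanishing on $M$ exists immediately.
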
 
Note that such $K_s$ exist; for example, we can take $K_s=\left(s^2, s^2, -\frac{s^2+1}{2}\right)$. (Note that since $s$ is odd $s^2+1$ is equivalent to $2$ mod 4 and so this is an odd pretzel as desired.)

\begin{proof}
Suppose that $K= \sum_{i=1}^n a_i K_{s_i}$ is topologically slice, where each $a_i$ is nonzero. By reflecting $K$, we can assume without loss of generality that $a_1>0$. Since $K$ is topologically slice and $H_1(\Sigma_2(K), \mathbb{Z}_{s_i})$ is nonzero, it follows from Theorem~\ref{cgvanishingthm} that there is some nontrivial character $\chi: H_1(\Sigma_2(K)) \to \mathbb{Z}_{s_1}$ such that $\bar{\sigma}_1(\tau(K,2,\chi))=0$. 
Observe that 
\[H_1(\Sigma_2(K))= \bigoplus_{i=1}^n \left( 
H_1(\Sigma_2(K_{s_i}))^{\oplus |a_i|}
\right) = \bigoplus_{i=1}^n 
\left(\mathbb{Z}_{s_i}[t]/\langle t+1 \rangle\right)
^{\oplus |a_i|}. 
\]
Note that $\chi$ is nontrivial on each $H_1(\Sigma_2(K_{s_i}))$ factor for $i \neq 1$, and that $\chi$ can be decomposed as $\chi= \oplus_{j=1}^{|a_1|} \chi_j$, where each $\chi_j: H_1(\Sigma_2(K_{s_1})) \to \mathbb{Z}_{s_1}$ and at least one $\chi_j$ is nontrivial. By the additivity of Casson-Gordon signatures, $\bar{\sigma}_1(\tau(K,2,\chi))= \sum_{j=1}^{|a_1|} \bar{\sigma}_1(\tau(K_{s_1}, 2, \chi_j))$. However, the proof of Proposition~\ref{dividespq} shows that $\sigma_1(\Sigma_2(K_{s_1}), \chi_j)<-1$ whenever $\chi_j$ is nontrivial, and that
\[|\bar{\sigma}_1(\tau(K_{s_1},2,\chi_j)- \sigma_1(\Sigma_2(K_{s_1}),\chi_j)| \leq 1.\] 
It follows that $\bar{\sigma}_1(\tau(K,2,\chi_j))$ is strictly negative whenever $\chi_j$ is nontrivial (and zero when $\chi_j$ is trivial), and so that $\bar{\sigma}_1(\tau(K,2,\chi))<0$, which is our desired contradiction. 
\end{proof}

Now we continue to the next case. 
\begin{prop}[Case 2]\label{ddividespr}
Let $K=K(p,q,r)$. Suppose that there exists a prime $d$ that divides $r$ and exactly one of $p$ and $q$, but that $r \neq -p,-q$. Then the Casson-Gordon signatures of $\Sigma_2(K)$ associated to characters to $\mathbb{Z}_d$ obstruct $K$'s topological sliceness. 
\end{prop}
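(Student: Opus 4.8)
The plan is to follow the strategy of Proposition~\ref{dividespq}, but to pivot the surgery description on the one parameter \emph{not} divisible by $d$. Since the roles of $p$ and $q$ are interchangeable, I would assume without loss of generality that $d\mid p$ and $d\nmid q$ (so $d$ divides $p$ and $r$ but not $q$; as $p,q,r$ are odd, $d$ is an odd prime). Mirroring the handle slides of Proposition~\ref{dividespq} — sliding the $p$- and $r$-framed curves over the $q$-framed curve, converting the $0$-framed $2$-handle to a $1$-handle, and canceling it against the $q$-framed $2$-handle — yields a surgery description for $\Sigma_2(K)$ with underlying link $T(2,2q)$, framings $p+q$ and $r+q$, and linking matrix $A=\left[\begin{smallmatrix} p+q & q \\ q & r+q\end{smallmatrix}\right]$. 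Since $\det A = pq+qr+rp = -D^2<0$ we have $\sigma(A)=0$, and reducing $A$ modulo $d$ gives $q\left[\begin{smallmatrix}1&1\\1&1\end{smallmatrix}\right]$, which (as $q$ is a unit mod $d$) presents $H_1(\Sigma_2(K),\mathbb{Z}_d)\cong\mathbb{Z}_d$ with generator $\mu_p=-\mu_r$. In particular $H_1(\Sigma_2(K),\mathbb{Z}_d)$ is cyclic, so by the Useful Fact it suffices to produce a character $\chi\colon H_1(\Sigma_2(K))\to\mathbb{Z}_d$ with $|\sigma_1(\Sigma_2(K),\chi)|>1$.

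I would take $\chi(\mu_p)=1=\chi(-\mu_r)$, so that $L_\chi$ is $T(2,2q)$ with its two strands oppositely oriented. By the same twisted-annulus Seifert matrix computation as in Proposition~\ref{dividespq}, but with the \emph{positive} pivot $q$ in place of the negative pivot $r$, one finds $\sigma_{L_\chi}(\omega^k)=\sign(q)=+1$ for all $k$; this sign flip (from $-1$ in Case~1 to $+1$ here) is the one essential difference between the two cases. Feeding $\sigma(A)=0$, $\sigma_{L_\chi}=+1$, and $\sum m_im_ja_{ij}=p+r$ into Proposition~\ref{computation1} gives
\[
\sigma_1(\Sigma_2(K),\chi)= -1 - \frac{2(d-1)}{d^2}\,(p+r).
\]

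The main obstacle is controlling the size and sign of $p+r$: from $d\mid p$ and $d\mid r$ one only gets $d\mid(p+r)$ for free, and $|p+r|\geq d$ is not enough to force $|\sigma_1|>1$ when $d$ is small (e.g.\ $d=3$). The key observation that resolves this is a divisibility improvement. Because $d\mid D^2=-(pq+qr+rp)$ and $d$ is prime, $d\mid D$, hence $d^2\mid D^2$; since also $d^2\mid pr$, the identity $D^2=-pr-q(p+r)$ forces $d^2\mid q(p+r)$, and as $\gcd(q,d)=1$ we conclude $d^2\mid(p+r)$. The hypothesis $r\neq -p$ gives $p+r\neq 0$, so $|p+r|\geq d^2$. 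Substituting: if $p+r>0$ then $\sigma_1\leq -1-2(d-1)=-(2d-1)<-1$, while if $p+r<0$ then $\sigma_1\geq -1+2(d-1)=2d-3>1$, using $d\geq 3$. In either case $|\sigma_1(\Sigma_2(K),\chi)|>1$, which contradicts the topological sliceness of $K$ via the Useful Fact and so obstructs it.
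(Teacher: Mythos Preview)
Your proof is correct and follows essentially the same route as the paper's: pivot the surgery description on the parameter not divisible by $d$, identify $L_\chi$ as an incoherently oriented $T(2,2q)$ with $\sigma_{L_\chi}=+1$, and apply Proposition~\ref{computation1}. The paper's only stated modification is to take $k=\tfrac{d-1}{2}$ rather than $k=1$; this maximizes $k(d-k)$ and, for $d\geq 5$, already forces $|\sigma_k|>1$ using only the crude bound $|p+r|\geq d$. Your choice $k=1$ works uniformly because of your additional observation that $d^2\mid(p+r)$, which is a genuine (and clean) improvement: it is in fact \emph{needed} even in the paper's approach when $d=3$, since $\tfrac{d^2-1}{2d^2}\cdot d=\tfrac{4}{3}$ is not large enough to push $\sigma_k$ past $1$ on the $p+r<0$ side. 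So your argument is essentially the intended one, with the divisibility step made explicit.
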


\begin{proof}
The argument is exactly analagous to that of the proof of Proposition~\ref{dividespq}, except that we choose $k$ to be $\frac{d-1}{2}$; the details are left to the reader. 
%
%Without loss of generality we assume that $d$ divides $p$  and not $q$- the argument in this case is very similar to that of Case 1. We slide the curves with framings $p$ and $r$  over the curve with framing $q$, then convert the $0$-framed 2-handle to a 1-handle and cancel with the $q$-framed 2-handle. We end with a link $T(2, 2q)$ with framings $p+q$ and $r+q$. Note that the corresponding linking matrix has signature equal to 0. 
%We take $\chi$ sending $\mu_p= -\mu_r$ to $1 \in \mathbb{Z}_d$. Now $L_\chi$ is the torus link $T(2, 2q)$ with strands oppositely oriented and signature function identically 1. By our useful fact, it suffices to show that for some $k\in \{1, \dots, d-1\}$ we have that $|\sigma_k(\Sigma_2(K),\chi)|>1$. We consider $k= \frac{d-1}{2}$, and note that $|p+r| \geq 2d$. 
%\begin{align*}
%\left|\sigma_k(\Sigma_2(K), \chi)\right|&= 
%\left|-1-2((p+q)-2q+(r+q)) \frac{(d-1)(d+1)}{4d^2}\right|\\
%&= \left|-1 -2(p+r) \left(\frac{d^2-1}{4d^2}\right) \right|\geq \frac{|p+r|}{2d}\cdot \frac{d^2-1}{d}-1 \\
%& \geq \frac{d^2-1}{d}-1 = d-1 - \frac{1}{d}\geq \frac{5}{3}>1. \qedhere
% \end{align*}
 \end{proof}
 
 \subsection{Case 3: $d$ divides all of $p$, $q$, and $r$}

In this case, we have that $H_1(\Sigma_2(K), \mathbb{Z}_d) \cong \mathbb{Z}_d \, \oplus \, \mathbb{Z}_d$, and so there may be metabolizers $M \leq H_1(\Sigma_2(K))$ with nontrivial image in $H_1(\Sigma_2(K), \mathbb{Z}_d)$. 
For each such metabolizer we provide a character $\chi$ to $\mathbb{Z}_d$ vanishing on $M$ such that the corresponding Casson-Gordon signature has sufficiently large absolute value. 
We first determine what ``sufficiently large'' is in the context of Corollary~\ref{mainobstructioncor}. 
\begin{lem}\label{h1boundlemma}
Let $\chi:H_1(\Sigma_2(K))\to \mathbb{Z}_d$. Then $\dim H_1^{\chi}(\Sigma_2(K))$ is 1 if $\chi(\mu_p), \chi(\mu_q),$ and $\chi(\mu_r)$ are all nonzero and 0 otherwise.
\end{lem}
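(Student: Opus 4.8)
The goal is to compute the dimension of the twisted homology $\dim H_1^{\chi}(\Sigma_2(K))$, where $\Sigma_2(K)$ is presented by surgery on the four-component link $L_0$ of Figure~\ref{surgerydiagram} with linking matrix $A$. My plan is to work directly from the surgery presentation, using the fact that the $\chi$-twisted homology of a surgered manifold is computed by a twisted (Fox-derivative / Alexander-style) presentation matrix obtained from the linking matrix $A$ by specializing the meridional variables $t_i \mapsto \omega^{m_i}$, where $m_i = \chi(\mu_i)$ and $\omega = e^{2\pi i/d}$. Since $\Sigma_2(K)$ is a rational homology sphere, the untwisted first homology is finite and the relevant invariant is governed by how this specialized presentation matrix drops rank.

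First I would set up the twisted chain complex over $\mathbb{Q}(\omega)$ associated to the handle decomposition coming from the surgery diagram: there is one $0$-cell, four $1$-cells (the meridians $\mu_0,\mu_p,\mu_q,\mu_r$), four $2$-cells (the surgery framings), and one $3$-cell. The twisted boundary maps are determined by $A$ with each framing/linking entry $a_{ij}$ contributing a factor recording the $\chi$-image of the corresponding meridian. The key computation is then to identify the rank of the twisted presentation matrix $A_\chi$, whose $(i,j)$ entry is $a_{ij}$ weighted by the appropriate power of $\omega$ according to which meridians are sent to nonzero elements of $\mathbb{Z}_d$. Concretely, a diagonal framing entry $a_{ii}$ survives only when $\chi(\mu_i)=0$ (otherwise it is killed by a $1-\omega^{m_i}$ type factor), while the off-diagonal linking-number entries (all equal to $1$ here, connecting $\mu_0$ to each of $\mu_p,\mu_q,\mu_r$) behave according to whether the meridians involved have nonzero character value.

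The heart of the argument is a rank computation split according to how many of $\chi(\mu_p),\chi(\mu_q),\chi(\mu_r)$ are nonzero. Because $\mu_0$ is the framing-$0$ component linking all others, and $\chi$ is determined on $H_1$ by its values on the meridians (subject to the relations imposed by $A$), one checks that $H_1^\chi$ is the homology of the complex in middle degree, i.e. the kernel-modulo-image at the twisted $A_\chi$. I would verify that when all three of $\chi(\mu_p),\chi(\mu_q),\chi(\mu_r)$ are nonzero, the twisted complex fails to be acyclic in exactly one dimension, giving $\dim H_1^\chi=1$; this reflects the fact that the corresponding branched cover $\Sigma_2(K)_\chi$ is \emph{not} a rational homology sphere in this top-degeneracy case, whereas in every other case Proposition~\ref{qhs3recognition} (via the Useful Fact) forces $\dim H_1^\chi=0$. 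The cleanest route is to observe that $\Sigma_2(K)_\chi$ is a rational homology sphere precisely when $\chi$ is nontrivial on ``enough'' of the homology, and to relate the Euler-characteristic count of the twisted complex to the order of $\det(A_\chi)$.

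The main obstacle I anticipate is bookkeeping the twisted boundary maps correctly so that the weighting by $\omega^{m_i}$ is unambiguous, and in particular handling the $\mu_0$ component whose character value is constrained by the relations (it is determined by $\chi(\mu_p),\chi(\mu_q),\chi(\mu_r)$ through the first row/column of $A$). I would resolve this by choosing an explicit basis in which the relation from the $0$-framed component eliminates $\mu_0$, reducing to a $3\times 3$ twisted system in $\mu_p,\mu_q,\mu_r$, and then doing the rank count case-by-case on the vanishing pattern of the three character values. The algebra is routine once the reduction is made; the only genuine subtlety is confirming that the ``all three nonzero'' case is exactly the one where the twisted determinant vanishes, which I expect to follow from a direct evaluation showing the specialized determinant picks up a factor that vanishes iff no meridian is sent to zero.
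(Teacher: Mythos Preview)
Your plan to compute $H_1^\chi(\Sigma_2(K))$ via a twisted chain complex coming from the surgery handle decomposition is a reasonable alternative strategy in principle, and it is genuinely different from what the paper does: the paper writes down $\pi_1(\Sigma_2(K))$ explicitly from the Wirtinger presentation plus surgery relations, applies the Reidemeister--Schreier algorithm to obtain a presentation of $\pi_1(\Sigma_2(K)_\chi)$, abelianizes, and then reads off both the rational rank and the $\mathbb{Z}_d$-eigenspace structure by hand. That approach never touches a ``twisted linking matrix'' at all.

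However, your proposal has a real gap. You dispose of the case where some $\chi(\mu_i)=0$ by appealing to Proposition~\ref{qhs3recognition} ``via the Useful Fact,'' but that proposition requires $H_1(\Sigma_2(K),\mathbb{Z}_d)$ to be cyclic, and this lemma lives precisely in Case~3, where $d$ divides all of $p,q,r$ and $H_1(\Sigma_2(K),\mathbb{Z}_d)\cong\mathbb{Z}_d\oplus\mathbb{Z}_d$. Neither Proposition~\ref{qhs3recognition} nor the Useful Fact applies here, so you still owe an actual computation in the ``otherwise'' case. The paper handles this by the same Reidemeister--Schreier computation, just with one meridian sent to~$0$.

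Separately, your description of the twisted presentation matrix $A_\chi$ is too impressionistic to be a proof. Statements like ``a diagonal framing entry $a_{ii}$ survives only when $\chi(\mu_i)=0$'' and ``the specialized determinant picks up a factor that vanishes iff no meridian is sent to zero'' are plausible heuristics, but you have not written down the twisted boundary maps, and the claim about the determinant is exactly the content of the lemma. The twisted complex for a surgered manifold is not literally the linking matrix with entries reweighted by roots of unity; the Fox-derivative contributions from the commutator relations $[\mu_0,\mu_i]=1$ and from the longitude words must be tracked, and your one-$0$-cell, four-$1$-cell, four-$2$-cell, one-$3$-cell count does not match any obvious CW structure on $\Sigma_2(K)$. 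If you want to push this route through, you should either build the twisted complex honestly from the presentation of $\pi_1(\Sigma_2(K))$ the paper gives, or compute the multivariable Alexander module of $L_0$ and specialize; either way the ``rank drops by exactly one iff all three values are nonzero'' statement needs an explicit calculation, not an expectation.
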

\begin{proof} 
By slight simplifications of the Wirtinger presentation, we obtain $ \pi_1(S^3-L_0)= \langle \mu_0, \mu_p, \mu_q, \mu_r : \mu_0 \mu_p=\mu_p \mu_0, \mu_0 \mu_q=\mu_q \mu_0, \mu_0 \mu_r=\mu_r \mu_0  \rangle$, where $\mu_*$ is any meridian of the $*$-framed curve, for  $*=0,p,q,r$. Note that the 0-framed longitudes of the surgery curves are given with respect to this generating set by $\lambda_0= \mu_r\mu_q\mu_p$ and $\lambda_p=\lambda_q=\lambda_r=\mu_0$. 
 Gluing in solid tori according to the surgery framings gives new relations $\lambda_0= \mu_r \mu_q \mu_p= 1$, $\mu_p^p \lambda_p= \mu_p^p \mu_0=1$, $\mu_q^q \lambda_q= \mu_q^q \mu_0=1$, and $\mu_r^r \lambda_r= \mu_r^r \mu_0=1$, and hence we have the following presentation for $\pi_1(\Sigma_2(K))$:
 \begin{align*}
 \pi_1(\Sigma_2(K))&= 
 \left\langle
 \mu_0, \mu_p, \mu_q, \mu_r : 
\begin{array}{ll} 
 [\mu_0, \mu_p]=[\mu_0, \mu_q]=[\mu_0, \mu_r]=1 \\
 \mu_r\mu_q\mu_p=\mu_p^p \mu_0=\mu_q^q \mu_0=\mu_r^r \mu_0=1
 \end{array}
 \right\rangle \\
 &= \left\langle
 \mu_p, \mu_q, \mu_r : 
 \mu_r\mu_q\mu_p=1, \mu_p^p=\mu_q^q=\mu_r^r
 \right\rangle
 \end{align*}
 
Any choice of $x,y,z \in \mathbb{Z}_d$ such that $x+y+z \equiv 0 \mod d$ will define a character $\chi$ via $\mu_p \mapsto x$, $\mu_q \mapsto y$, and $\mu_r \mapsto z$. First suppose that none of $x,y,$ and $z$ are equivalent to $0$. Then by replacing $\chi$ with a nonzero multiple, which does not change the underlying cover, we may assume that $x=1$.

We apply the Reidemeister-Schreier algorithm\footnote{We lift 
loop $\mu_a$ to arcs $a_1, \dots, a_d$, 
loop $\mu_b$ to arcs $b_1, \dots, b_d$, and 
loop $\mu_c$ to arcs $c_1, \dots, c_d$ (all arcs are labeled by their starting point). We then contract $a_2, \dots, a_d$ and let $a=a_1$.} to our above presentation for $\pi_1(\Sigma_2(K))$ to obtain a presentation for $\pi_1(\Sigma_2(K)_\chi)$. Abelianizing, we obtain a presentation for $H_1(\Sigma_2(K)_\chi)$
with generators $a, b_1, \dots, b_d, c_1, \dots, c_d$ and relations
$a+ b_1 + c_x=0,\, b_k + c_{x+k-1}=0 \text{ for }k=2, \dots, d,$ and $
\frac{p}{d} a=\frac{q}{d}\left( b_1+ \dots+ b_d\right)= \frac{r}{d}(c_1 + \dots+ c_d)
$, where all subscripts are taken mod $d$.
This simplifies to 
\[
H_1(\Sigma_2(K)_\chi)= 
\langle
a, b_1, \dots, b_d 
:
\frac{p}{d}a=\frac{q}{d}(b_1+ \dots+ b_d)
=-\frac{r}{d}(b_1+ \dots + b_d+a)
\rangle
\]
So $ H_1(\Sigma_2(K)_\chi, \mathbb{Q})= 
\langle
b_1, \dots, b_d 
:
\left( pq+pr+qr \right)(b_1+ \dots b_d)=0
\rangle.$
Note that the covering transformation sends $b_i$ onto $b_{i+1}$ for $i=1, \dots, d-1$, and so $\dim H_1^{\chi}(\Sigma_2(K))=1$. 

When one of $x,y,$ and $z$ is 0, an extremely similar argument shows that $\Sigma_2(K)_\chi$ is a rational homology sphere and so $\dim H_1^{\chi}(\Sigma_2(K))=0$. 
\end{proof}

By considering the linking matrix $A$ for $L_0$ with its entries taken mod $d$, we see that $H_1(\Sigma_2(K), \mathbb{Z}_d)$ is generated as a $\mathbb{Z}_d$-module by the images of $\mu_p, \mu_q$ and $\mu_r$ (which we continue to refer to as $\mu_p, \mu_q, \mu_r$ by a mild abuse of notation) and has single relation $\mu_p+\mu_q+\mu_r=0$.  
Suppose that $\chi: H_1(\Sigma_2(K)) \to \mathbb{Z}_d$ sends $\mu_p$ to $a$, $\mu_q$ to $b$, and $\mu_r$ to $c$, where $0<a,b,c<d$. We must have $\chi(\mu_0)\equiv 0$ and $a+b+c\equiv 0$ mod $d$. We will use Proposition~\ref{computation1} to compute $\sigma_1(\Sigma_2(K),\chi)$,  
letting $L_\chi$ be the distant union of $T(a, pa)$, $T(b,qb)$, and $T(c, rc)$, each with all strands coherently oriented, along with two incoherently oriented linking 0 strands parallel to $\lambda_0$, as in Figure \ref{linkchi}. 
\begin{figure}[h]\label{linkchi}
\includegraphics[height=4.5cm]{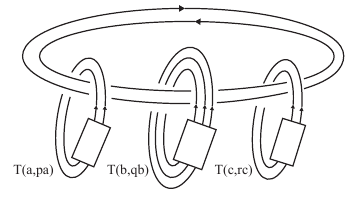}
\caption{The link $L_\chi$, pictured with $a=2, b=3, c=2$.}
\end{figure}

Note that as computed in Example~\ref{coloredexample}, $\sigma_{L_{\chi}}(\omega)= \sigma_{T(a, pa)}(\omega)+\sigma_{T(b,qb)}(\omega)+\sigma_{T(c,rc)}(\omega)
$. Also, Litherland's formula of \cite{Lith79} for the Tristram-Levine signature of a torus link\footnote{Note that while Litherland's result is stated only for torus knots, it holds for torus links as well. In particular, the  underlying computation in \cite{Brie66} of the signature of the Brieskorn manifold $V(p,q,r)_{\delta}= \{(z_1, z_2, z_3) \in \mathbb{C}^3 : z_1^p+ z_2^q+z_3^r= \delta\} \cap \mathbb{D}^6$ does not depend on any relative primeness of the parameters $p,q,$ and $r$.}
 implies that $\sigma_{T(j,jkn)}(e^{2\pi i/n})= -2j(j-1)k$ for $0<j<n$.  Therefore, we have that
\begin{align*}
\sigma_1(\Sigma_2(K), \chi)&= 0 - \sigma_{L_{\chi}}(\omega) - 2(a^2p+b^2q+c^2 r)\left(\frac{d-1}{d^2}\right)\\
&=-\sigma_{T(a, pa)}(\omega)-\sigma_{T(b,qb)}(\omega)-\sigma_{T(c,rc)}(\omega)- 2(a^2p+b^2q+c^2r)\left(\frac{d-1}{d^2}\right)  \\
&= 2a(a-1)\frac{p}{d}+ 2b(b-1)\frac{q}{d}+ 2c(c-1)\frac{r}{d} -2(a^2p+b^2q+c^2r)\left(\frac{d-1}{d^2}\right)\\
&=\frac{2}{d^2} \left( a(d-a)p +b(d-b)q+c(d-c)r \right)
\end{align*}

Unfortunately, we cannot conclude that $|\sigma_1(\Sigma_2(K), \chi)|> 1$ for all such choices of $\chi$. For example, when $K=P(3\cdot 7, 5\cdot 7, -17\cdot 7)$, $d=7$, and $\chi$ sends  $\mu_p$ to $2$, $\mu_q$ to $4$, and $\mu_r$ to $1$ we have $|\sigma_1(\Sigma_2(K), \chi)|= 8/11$. However, this choice of $\chi$ does not vanish on any metabolizer for the linking form $lk: H_1(\Sigma_2(K)) \times H_1(\Sigma_2(K)) \to \mathbb{Q}/\mathbb{Z}$, and so there is still some hope to obstruct $K$'s sliceness via double branched cover Casson-Gordon signatures.

\begin{lem}\label{evenintegerlemma}
Suppose $M$ is a metabolizer for the linking form on $H_1(\Sigma_2(K))$ with nonzero image in $H_1(\Sigma_2(K), \mathbb{Z}_d)$. 
If $\chi: H_1(\Sigma_2(K)) \to \mathbb{Z}_d$ vanishes on $M$ and takes $\mu_p, \mu_q, \mu_r$ to nonzero elements of $\mathbb{Z}_d$, then  $\sigma_1(\Sigma_2(K), \chi)$ is an integer that is divisible by 4. 
\end{lem}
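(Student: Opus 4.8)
The plan is to reduce the lemma to a single congruence modulo $d$ and then to extract that congruence from the defining property of the metabolizer via the self-duality of the linking form.

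First I would record the formula computed just above the lemma, namely
\[
\sigma_1(\Sigma_2(K),\chi)=\frac{2}{d^2}\bigl(a(d-a)p+b(d-b)q+c(d-c)r\bigr),
\]
where $a=\chi(\mu_p)$, $b=\chi(\mu_q)$, $c=\chi(\mu_r)$ lie in $\{1,\dots,d-1\}$ and $a+b+c\equiv 0 \pmod d$. Since $d\mid p,q,r$, write $p=d\tilde p$, $q=d\tilde q$, $r=d\tilde r$, so that $\sigma_1=\frac{2}{d}S$ with $S:=a(d-a)\tilde p+b(d-b)\tilde q+c(d-c)\tilde r$. Because $d$ is odd, each of $a(d-a),b(d-b),c(d-c)$ is even, so $S$ is even and $\sigma_1=\frac{4}{d}\cdot\frac{S}{2}$. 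Hence both assertions — that $\sigma_1$ is an integer and that it is divisible by $4$ — follow at once from the single claim that $d\mid S$. Reducing $a(d-a)\equiv -a^2$ and so on modulo $d$ and using $c\equiv -(a+b)$, this is equivalent to
\[
a^2\tilde p+b^2\tilde q+c^2\tilde r\equiv a^2(\tilde p+\tilde r)+2ab\,\tilde r+b^2(\tilde q+\tilde r)=(a,b)\,B'\,(a,b)^{T}\equiv 0 \pmod d,
\]
where $B'=\begin{pmatrix}\tilde p+\tilde r & \tilde r\\ \tilde r & \tilde q+\tilde r\end{pmatrix}$.

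Next I would fix a convenient presentation of the linking form. By the handle slides used in the proof of Proposition~\ref{dividespq}, $\Sigma_2(K)$ is surgery on $T(2,2r)$ with framings $p+r,q+r$, so $H_1(\Sigma_2(K))=\mathrm{coker}(B)$ with $B=\begin{pmatrix}p+r & r\\ r& q+r\end{pmatrix}$ in the basis $\{\mu_p,\mu_q\}$ (and $\mu_r=-\mu_p-\mu_q$), and the linking form is $\lambda=-B^{-1}\bmod\Z$. Note $B=dB'$ and $\det B'=-(D/d)^2=:-N$, whence $-B^{-1}=\frac{1}{dN}\,\mathrm{adj}(B')$. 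The heart of the argument is then the standard duality between characters and torsion. Since $\lambda$ is nonsingular, every $\chi\colon H_1(\Sigma_2(K))\to\Z_d$ has the form $\chi_w=d\,\lambda(w,-)$ for a unique $w$ in the $d$-torsion subgroup $H_1(\Sigma_2(K))[d]$, and $\chi|_M=0$ is equivalent to $w\in M^{\perp}$. As $M$ is a metabolizer of a nonsingular form, $M^{\perp}=M$; hence $w\in M$ and therefore $\lambda(w,w)=0$, giving $\chi(w)=d\,\lambda(w,w)=0$. To compute $\chi(w)$ in coordinates I would parametrize the torsion: because $B=dB'$, the vectors $B'y$ for $y\in\Z^2$ represent exactly $H_1(\Sigma_2(K))[d]$, and a direct calculation using $B'\,\mathrm{adj}(B')=-N\,I$ shows $\chi_{B'y}$ sends $\mu_p,\mu_q$ to $-y_1,-y_2$. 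Thus our $\chi$ corresponds to $w=B'y$ with $y\equiv-(a,b)^{T}\pmod d$, and $\chi(w)=(a,b)\,B'y\equiv-(a,b)\,B'\,(a,b)^{T}\pmod d$. Combined with $\chi(w)=0$ this is exactly the congruence isolated above, completing the proof.

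The main obstacle is the modular bookkeeping in this last step. Evaluating $\lambda$ on an arbitrary lift to $H_1(\Sigma_2(K))$ of the image of $M$ in $H_1(\Sigma_2(K),\Z_d)$ does not give a well-defined element of $\frac1d\Z/\Z$: when $d^2\mid D$ there is higher $d$-power torsion, and the correction terms $2d\,\lambda(\mathrm{lift},z)+d^2\lambda(z,z)$ need not vanish modulo $d$. The clean way around this, and the crux of the write-up, is to work with the genuine torsion element $w=B'y$ rather than a naive lift; once $H_1(\Sigma_2(K))[d]$ is parametrized this way, the identity $B'\,\mathrm{adj}(B')=-N\,I$ makes every congruence exact. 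The remaining ingredients — that $x(d-x)$ is even for odd $d$, and that $M^{\perp}=M$ — are routine.
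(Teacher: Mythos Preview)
Your proof is correct and lands on the same key congruence $a^2\tilde p + b^2\tilde q + c^2\tilde r \equiv 0 \pmod d$ as the paper, but by a more structural route. The paper works by hand: it picks an arbitrary $\alpha = x\mu_p + y\mu_q \in M$ with $(x,y)\not\equiv(0,0)\pmod d$ (this is exactly where the nonzero-image hypothesis is spent), computes $lk(\alpha,\alpha)=\frac{1}{D^2}\bigl((q+r)x^2-2rxy+(p+r)y^2\bigr)$ from the inverse of the original $4\times4$ linking matrix $A$, uses $\chi(\alpha)=ax+by\equiv 0$ to substitute $y\equiv -a\bar b\,x$, and simplifies. You instead invoke the duality $\chi\leftrightarrow w\in H_1[d]$ together with $M^{\perp}=M$ to place $w$ in $M$ automatically, and then read off the congruence from $\chi(w)=0$ via the explicit parametrization $H_1[d]=\{B'y\}$ in the reduced $2\times2$ presentation. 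Your version is a bit cleaner: it never actually needs the hypothesis that $M$ has nonzero image mod $d$, and it explains \emph{why} the right element of $M$ to test is the one dual to $\chi$. The paper's version, on the other hand, is more elementary in that it avoids any appeal to the isomorphism $H_1\cong\Hom(H_1,\Q/\Z)$ and stays with the original surgery presentation throughout.
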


\begin{proof}
For convenience, we write $p=dp', q=dq', r=dr'$. Note that we have assumed that $M$ has nontrivial image in $H_1(\Sigma_2(K), \mathbb{Z}_d)$, and hence we can assume that there is $\alpha=x\mu_p+ y \mu_q \in M$ such that not both of $x$ and $y$ are equivalent to 0 mod $d$. 

The linking form is given with respect to our $\mu_0, \mu_p, \mu_q, \mu_r$ generating set for $H_1(\Sigma_2(K))$ by $-A^{-1}$ (Gordon-Litherland). Direct computation shows that 
$lk(x \mu_p+ y \mu_q, x\mu_p+y\mu_q)= \frac{1}{D^2}((q+r)x^2-2rxy + (p+r)y^2)$.
Since $\alpha \in M$, we know that $D^2$ and hence $d^2$ divides $(q+r)x^2-2rxy+(p+r)y^2$, and so we have Equation $(*)$: $
(q'+r')x^2-2r'xy+(p'+r')y^2 \equiv 0 \mod d.
$

Now, let $\chi: H_1(\Sigma_2(K)) \to \mathbb{Z}_d$ be a character vanishing on $M$. As usual, we write $a= \chi(\mu_p), b= \chi(\mu_q), c= \chi(\mu_r)$, with $a+b+c \equiv 0 \mod d$. Since $\chi(\alpha)=ax+by\equiv 0 \mod d$, we can write $y=-a\bar{b}x$, and so neither $x$ nor $y$ is equivalent to $0$ mod $d$. Substituting into $(*)$, we obtain
\begin{align*}
0 &\equiv (q'+r')x^2-2r'xy+(p'+r')y^2 \\
&\equiv (q'+r')x^2+2r'a\bar{b}x^2+(p'+r')a^2\bar{b}^2 x^2 \\
& \equiv \left[a^2 \bar{b}^2 p' + q' + (a \bar{b}+1)^2 r'\right]x^2 \mod d. 
\end{align*}
Multiplying through by $(b^2/x^2)$ and recalling that $c^2 \equiv (a+b)^2 \mod d$ gives us 
that $a^2p' + b^2 q'+ c^2 r' \equiv 0  \mod d.$
Finally, we can write 
\begin{align*}
\frac{d^2}{2}\sigma_1(\Sigma_2(K), \chi) &= a(d-a)p +b(d-b)q+c(d-c)r\\
&= d( a(d-a)p' +b(d-b)q'+c(d-c)r') \\
&= d^2(p'+q'+r')- d(a^2p'+ b^2q'+c^2r'). 
\end{align*}
Observe that the right side is divisible by $d^2$, and hence $\sigma_1(\Sigma_2(K))$ is an integer. Also, since $d$ is odd, $a(d-a)p+b(d-b)q+c(d-c)r$ is even for any choice of $a,b,$ and $c$ and $\sigma_1(\Sigma_2(K),\chi)$ is divisible by 4. 
\end{proof}

\begin{prop}[Case 3]\label{dividespqr}
Let $K=P(p,q,r)$, with $p,q\neq-r$ and suppose that $d$ divides all of $p,q,r$. Then the Casson-Gordon signatures of $\Sigma_2(K)$ associated to characters to $\mathbb{Z}_d$ obstruct $K$'s topological sliceness. 
\end{prop}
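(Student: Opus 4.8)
The plan is to show that the hypothesis of Corollary~\ref{mainobstructioncor} fails for \emph{every} metabolizer: if $K$ were topologically slice there would be a single metabolizer $M$ of the linking form on $H_1(\Sigma_2(K))$ all of whose prime-power characters $\chi$ satisfy $|\sigma_1(\Sigma_2(K),\chi)|\le \dim H_1^{\chi}(\Sigma_2(K))+1$, so it suffices to produce for each $M$ a character violating this inequality. I would organize the argument around the image $\bar M$ of $M$ in $H_1(\Sigma_2(K),\Z_d)\cong\Z_d\oplus\Z_d$. First note $\bar M$ has rank at most $1$: a rank-$2$ image would be all of $\Z_d\oplus\Z_d$, which by Nakayama would force the $d$-primary part $(H_1)_d$ of $H_1(\Sigma_2(K))$ to be generated by $M$, contradicting $|M_d|=\sqrt{|(H_1)_d|}<|(H_1)_d|$. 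In particular the annihilator of $\bar M$ always contains a nontrivial $\Z_d$-character.

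Next I would clear away the degenerate configurations. If $\bar M=0$ then every $\Z_d$-character vanishes on $M$, and if $\bar M$ is a line spanned by one of $\mu_p,\mu_q,\mu_r$ then its annihilator contains a character killing a single meridian. In all of these I take $\chi$ sending one meridian to $0$ and the other two to $\frac{d-1}{2}$ and $\frac{d+1}{2}$; Lemma~\ref{h1boundlemma} then gives $\dim H_1^{\chi}(\Sigma_2(K))=0$, while the signature formula collapses (the corresponding torus-link factor drops out) to $|\sigma_1(\Sigma_2(K),\chi)|=\frac{2}{d^2}\cdot\frac{d^2-1}{4}\,|m|$, where $m$ is one of $p+q$, $q+r$, $p+r$. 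Since $d\mid p,q,r$ each of these is a nonzero multiple of $d$ — this is exactly where $p+q>0$ and the standing hypothesis $p,q\neq -r$ enter — so $|m|\ge d$ and $|\sigma_1|\ge \frac{d^2-1}{2d}>1$, violating the bound.

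The substantive case is $\bar M$ a line in general position, i.e.\ spanned by none of the $\mu_*$. Then every $\Z_d$-character vanishing on $M$ sends all three meridians to nonzero classes, so by Lemma~\ref{h1boundlemma} the threshold is $2$ and by Lemma~\ref{evenintegerlemma} each such $\sigma_1(\Sigma_2(K),\chi)$ is a multiple of $4$; hence it suffices to find one such $\chi$ with $\sigma_1\neq 0$. Writing $p=dp'$, $q=dq'$, $r=dr'$ and letting $\chi_t$ run over the nonzero points of the annihilator line, the substitution $s\mapsto ts$ permutes the nonzero residues mod $d$, so summing the formula for $\sigma_1(\Sigma_2(K),\chi_t)$ yields $\sum_t \sigma_1(\Sigma_2(K),\chi_t)=\pm\frac{d^2-1}{3}\,(p'+q'+r')$. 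Thus whenever $p+q+r\neq 0$ some $\chi_t$ has nonzero, hence $\ge 4$ in absolute value, signature, and we are done.

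The hard part will be the remaining configuration: $\bar M$ in general position together with $p+q+r=0$. Here the summation is inconclusive, and in fact the order-$d$ signatures can genuinely all vanish — for instance $P(9,15,-24)$ is algebraically slice with $3\mid p,q,r$ and $p+q+r=0$, and its general-position metabolizer has every order-$3$ Casson--Gordon signature equal to $0$. My plan is to escalate to characters of order $d^2$, which Corollary~\ref{mainobstructioncor} also permits: I would lift the annihilator of $\bar M$ to a $\Z_{d^2}$-character, recompute $\sigma_1$ from Proposition~\ref{computation1} by replacing each meridian with the appropriate twisted cable and re-evaluating the Litherland torus-link signatures at a primitive $d^2$-th root of unity, and recompute $\dim H_1^{\chi}(\Sigma_2(K))$ by the Reidemeister--Schreier argument of Lemma~\ref{h1boundlemma}. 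I expect the relation $p+q+r=0$, which forces cancellation at level $d$, to fail to do so at level $d^2$, producing a character whose signature exceeds the bound; in the sub-case where $(H_1)_d$ is elementary abelian and no order-$d^2$ characters exist, one is forced back on a second prime divisor of $D$ (necessarily present once $p+q+r=0$), dovetailing with the global case analysis of Theorem~\ref{maintheorem}. Setting up this higher-order signature computation and proving the requisite non-vanishing is where I expect essentially all the difficulty of the proposition to lie.
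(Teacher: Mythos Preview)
Your averaging argument for the general-position case is correct and, once you notice one parity fact, already complete: this proposition sits inside the proof of Theorem~\ref{maintheorem}, so $p,q,r$ are all odd, hence $p+q+r$ is odd and in particular nonzero. Your ``hard case'' $p+q+r=0$ therefore cannot occur, and the proposed escalation to $\mathbb{Z}_{d^2}$-characters is unnecessary. (Your example $P(9,15,-24)$ is an \emph{even} pretzel and lies outside the scope of the statement.) So the only genuine gap is a missing one-line observation, not a missing argument.

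For comparison, the paper handles the general-position case by a different mechanism. Instead of summing over the whole line of characters, it fixes a single $\chi$ vanishing on $M$ with $\chi(\mu_p)=1$, $\chi(\mu_q)=b$, writes out the two equations $\sigma_1(\Sigma_2(K),\chi)=0$ and $\sigma_1(\Sigma_2(K),2\chi)=0$ explicitly, and eliminates to obtain (according to whether $b<\tfrac{d-1}{2}$, $b=\tfrac{d-1}{2}$, or $b>\tfrac{d-1}{2}$) one of the contradictions $p+q=0$, $p+q+r=0$, or $p+r=0$. The first and third are ruled out by $p,q>0$ and the hypothesis $p,q\neq -r$; the middle one forces $p$ even. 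Your summation collapses all three sub-cases into the single identity $\sum_t\sigma_1(\Sigma_2(K),t\chi_0)=\pm\tfrac{d^2-1}{3}(p'+q'+r')$, which is tidier but extracts only the parity contradiction; that is enough here.

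One technical point: in your degenerate cases you plug $a=0$ (or $b=0$, $c=0$) into the formula $\sigma_1=\tfrac{2}{d^2}\bigl(a(d-a)p+b(d-b)q+c(d-c)r\bigr)$, but that formula was derived via Litherland's torus-link signature under the standing assumption $0<a,b,c<d$, and Proposition~\ref{computation1} requires a \emph{non-empty} cable on every component. The paper sidesteps this by observing that once some $\chi(\mu_*)=0$ the cover is a rational homology sphere (Lemma~\ref{h1boundlemma}) and then invoking the handle-slid surgery presentations and computations of Propositions~\ref{dividespq} and~\ref{ddividespr} directly; you should do the same rather than extend the formula.
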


\begin{proof}
Suppose  that $K$ is CG-slice at $d$, for a contradiction.
So there exists a metabolizer $M \leq H_1(\Sigma_2(K))$ such that any character $\chi$ of prime power order that vanishes on $M$ has $|\sigma_1(\Sigma_2(K), k\chi_0)| \leq \dim H_1^{\chi}(\Sigma_2(K))+1$ for all $0<k<d$. 
If there exists $\chi$ to $\mathbb{Z}_d$ vanishing on $M$ that takes 
any of $\mu_p,\mu_q$, and $\mu_r$ to $0$, then $\Sigma_2(K)_\chi$ is a rational homology sphere and arguments as in Cases 1 and 2 show that there is some $k$ such that $|\sigma_1(\Sigma_2(K), k\chi_0)|>1$.
 
So we can now assume that no such $\chi$ exists. In particular, this implies that the image of $M$ in $H_1(\Sigma_2(K), \mathbb{Z}_d)$ is nontrivial. So let $\chi_0: H_1(\Sigma_2(K)) \to \mathbb{Z}_d$ be a nontrivial character vanishing on $M$ and taking none of $\mu_p,\mu_q$, and $\mu_r$ to 0. Since $K$ is CG-slice, $|\sigma_1(\Sigma_2(K), k\chi_0)|\leq 2$ for all $k$. Lemma~\ref{evenintegerlemma} gives us that $\sigma_1(K, k\chi_0)$ is an integer divisible by 4 and so
$\sigma_1(\Sigma_2(K), k \chi_0)=0$. 

Now, let $\chi$ be a multiple of $\chi_0$ such that $\chi(\mu_p)=1$, $\chi(\mu_q)=b$, and so $\chi(\mu_r)=d-b-1$. 
So  we have equation $\eq(0):$
\[0=\frac{d^2}{2}\sigma_1(K,\chi)= (d-1)p+b(d-b)q+(b+1)(d-b-1)r. 
\]
We split into cases depending on the value of $b$.
\medskip

\noindent {\bf Case 1: $0<b<\frac{d-1}{2}$}:

Therefore $(2\chi)(\mu_p)=2$, $(2\chi)(\mu_q)=2b$, and $(2\chi)(\mu_r)=d-2b-2$. 
So we have equation $\eq(1)$:
\[ 0=\frac{d^2}{2}(\sigma_1(K,2 \chi))= 2(d-2)p+2b(d-2b)q+(2b+2)(d-2b-2)r\]
We then have that 
\begin{align*}
\frac{1}{2}(2\eq(0)-\eq(1))=p+b^2q+(b+1)^2r=0 \\
 \frac{1}{2d}(4\eq(0)-\eq(1))= p+bq+(b+1)r=0 
 \end{align*}
It follows that  $(b+1)r=-(b-1)q$ and finally that $p+q=0$, which is our desired contradiction. 
\smallskip

\noindent{\bf Case 2: $b=\frac{d-1}{2}$}:

In this case equation $\eq(0)$ implies that $q+r=-\frac{4p}{d+1}$. Also, $(2\chi)(\mu_p)=2$ and $(2\chi)(\mu_q)=(2\chi)(\mu_r)=d-1$, so we have equation $\eq(2)$:
\[0=2(d-2)p+(d-1)q+(d-1)r\]
Substituting our expression for $q+r$ into $\eq(2)$, we obtain that $(d^2-3d)p=0$, and so that $d=3$. But this implies that $q+r=-p$, and hence that $p$ is even, which is our desired contradiction. 
\smallskip

\noindent{\bf Case 3: $d/2<b<d$}:

Therefore $(2\chi)(\mu_p)=2$, $(2\chi)(\mu_q)=2b-d$, and $(2\chi)(\mu_r)= 2d-2b-2$. 
So we have equation $\eq(3)$:
\[0=\frac{d^2}{2}(\sigma_1(K, 2\chi))= 2(d-2)p+(2b-d)(2d-2b)q+ (2b-d+2)(2d-2b-2)r
\]
We then have that 
\begin{align*}
\frac{1}{2}(2\eq(0)-\eq(2))=p+(d-b)^2q+(d-b-1)^2r=0 \\
 \frac{1}{2d}(4\eq(0)-\eq(2))= p+(d-b)q+(d-b-1)r=0 
 \end{align*}
It follows that $(d-b)q=-(d-b-2)r$, and finally that $p+r=0$, which is our desired contradiction. 
\end{proof}

\subsection{Cases 4,5, and 6: $d$ divides $pq+pr+qr$ but not any of $p,q,r$}

The link $L_0$ considered as a 4-colored link has identically 0 colored signature, since it is a connected sum of 2-colored Hopf links. Note that since $d$ divides none of $p,q,$ and $r$, every nontrivial character $\chi$ to $\mathbb{Z}_d$ has all of $\chi(\mu_p),\chi(\mu_q),\chi(\mu_r),$ and $\chi(\mu_0)$ nonzero, and so Theorem~\ref{coloredlinks} applies and we have the following simple formula for $\sigma_1(\Sigma_2(K),\chi)$. 

\begin{prop}\label{fchiprop}
Let $K=P(p,q,r)$ and suppose $\chi:H_1(\Sigma_2(K)) \to \mathbb{Z}_d$ 
has $\chi(\mu_p),\chi(\mu_q),\chi(\mu_r),$ and $\chi(\mu_0)$ all nonzero. 
Let $a,b,c$, and $\epsilon$ be the unique lifts of $\chi(\mu_p),\chi(\mu_q),\chi(\mu_r),$ and $\chi(\mu_0)$ to $\{1, \dots, d-1\}$. 
Then 
\[\sigma_1(\Sigma_2(K),\chi)= 3- \frac{2}{d^2} f(\chi),
\]
where $f(\chi):=(d-\epsilon)(a+b+c) + (d-a)(ap+\epsilon)+ (d-b)(bq+\epsilon) + (d-c)(cr+ \epsilon).$
\end{prop}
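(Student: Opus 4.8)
The plan is to obtain the formula as a direct application of Theorem~\ref{coloredlinks} to the standard surgery presentation $L_0$ of $\Sigma_2(K)$. The hypotheses of that theorem are met: since $d$ is prime (the standing assumption in these cases) and, by hypothesis, each of $\chi(\mu_0),\chi(\mu_p),\chi(\mu_q),\chi(\mu_r)$ is nonzero, every meridian is sent to a unit in $\mathbb{Z}_d$, and $\chi$ has prime-power order. The lifts $m_i$ appearing in Theorem~\ref{coloredlinks} are then exactly $m_0=\epsilon$, $m_p=a$, $m_q=b$, $m_r=c$.

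Two of the three terms in the formula of Theorem~\ref{coloredlinks} are immediately controlled. First, $\sigma(A)=0$, as recorded when $L_0$ was introduced. Second, I would invoke the observation made just above the proposition: viewing $L_0$ as a $4$-colored link, it is a connected sum of $2$-colored Hopf links (the $0$-framed component clasps each of the mutually unlinked $p$-, $q$-, and $r$-framed components), so by the Hopf link computation and additivity of the colored signature, $\sigma_{L_0}(\omega_\chi)=0$. With these two vanishings, the formula of Theorem~\ref{coloredlinks} collapses to
\[
\sigma_1(\Sigma_2(K),\chi)= \sum_{i<j} a_{ij} - \frac{2}{d^2}\sum_{i,j}(d-m_i)m_j a_{ij}.
\]

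It then remains to evaluate the two combinatorial sums against the explicit matrix $A$. The off-diagonal upper-triangular entries of $A$ are precisely the three $1$'s in its first row, so $\sum_{i<j}a_{ij}=3$, giving the constant term. For the weighted sum I would substitute the $m_i$ above and split into diagonal and off-diagonal parts: the diagonal framings $p,q,r$ contribute $(d-a)ap+(d-b)bq+(d-c)cr$, while the three linking numbers $a_{0p}=a_{0q}=a_{0r}=1$, each counted in both orders, contribute the cross terms $(d-\epsilon)a+(d-a)\epsilon$ and its analogues in $b$ and $c$. Regrouping these exactly reproduces $f(\chi)=(d-\epsilon)(a+b+c)+(d-a)(ap+\epsilon)+(d-b)(bq+\epsilon)+(d-c)(cr+\epsilon)$, yielding the claimed identity. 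The content here is the correct application of the machinery rather than any genuine difficulty; the only step needing care is the bookkeeping matching $\sum_{i,j}(d-m_i)m_j a_{ij}$ with $f(\chi)$, since the $\epsilon$-cross terms arise from the ordered pairs $(0,p),(p,0),\dots$ and must be grouped correctly. I would also flag that the hypothesis $\chi(\mu_0)\neq 0$ is genuinely used, as Theorem~\ref{coloredlinks} requires \emph{every} meridian, including $\mu_0$, to map to a unit.
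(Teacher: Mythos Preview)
Your proposal is correct and is exactly the approach the paper takes: the proposition is stated immediately after the observation that $L_0$ has identically zero colored signature (as a connected sum of Hopf links) and that Theorem~\ref{coloredlinks} therefore applies, so the paper's implicit proof is precisely the substitution into Theorem~\ref{coloredlinks} that you carry out. Your bookkeeping matching $\sum_{i,j}(d-m_i)m_j a_{ij}$ with $f(\chi)$ is accurate.
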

Note that the parity of $a+b+c$ and of $\epsilon$ together determine the parity of $f(\chi)$; in particular, if $a+b+c$ is odd then $\epsilon$ and $f(\chi)$ have opposite parities. 
Also, when $a+b+c=d$ we have that 
\begin{align*}
f(\chi) &= d^2 + d \epsilon +a(d-a)p + b(d-b)q+ (a+b)(d-(a+b))r 
\end{align*}

\begin{lem}\label{divisibilitylemma}
Let $\chi: H_1(\Sigma_2(K)) \to \mathbb{Z}_d$, where $d$ divides none of $p,q,r$. Then $f(\chi)$ is divisible by $d^2$.
\end{lem}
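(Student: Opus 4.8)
The plan is to show that the integer $f(\chi)$ is divisible by $d^2$ by exhibiting two separate divisibility facts and combining them, namely that $f(\chi)$ is divisible by $d$ for an elementary reason and that $f(\chi)/d$ is again divisible by $d$ because of the structure of $H_1(\Sigma_2(K))$. First I would reduce $f(\chi)$ modulo $d$ directly from its defining formula. Writing $f(\chi) = (d-\epsilon)(a+b+c) + (d-a)(ap+\epsilon) + (d-b)(bq+\epsilon) + (d-c)(cr+\epsilon)$, every summand contains a factor of the form $(d-x)$, so modulo $d$ we have $f(\chi) \equiv (-\epsilon)(a+b+c) + (-a)(ap+\epsilon) + (-b)(bq+\epsilon) + (-c)(cr+\epsilon) \pmod d$. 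Since $\chi$ is a character coming from the surgery presentation, the images of the meridians satisfy the single linear relation, which here forces $a+b+c \equiv \epsilon \cdot 0$-type congruences; more precisely, using that $\chi(\mu_0)=\epsilon$ and the relation $\mu_p+\mu_q+\mu_r=0$ together with the framing relations gives $a+b+c \equiv 0 \pmod d$ (and $\epsilon$ determined accordingly). Substituting this in should collapse the reduced expression to $-(a^2 p + b^2 q + c^2 r) \pmod d$, which I then need to see vanishes mod $d$.

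The key input for divisibility by $d$ at this first stage is the relation encoded in the linking matrix: reducing the presentation $\mu_p+\mu_q+\mu_r = 0$ and the framing conditions $\chi(\mu_0)\equiv 0$-compatibility shows that $(a,b,c)$ lies in the kernel of the reduction of $A$ mod $d$. Because $d$ divides $\det A = -(pq+qr+pr) = -D^2$ but divides none of $p,q,r$, the group $H_1(\Sigma_2(K),\mathbb{Z}_d)$ is cyclic, and the cyclicity pins down the characters tightly enough that $a^2 p + b^2 q + c^2 r \equiv 0 \pmod d$. I would extract this congruence either by directly manipulating the relation $a+b+c\equiv 0$ together with $pq+qr+pr\equiv 0 \pmod d$, or by mimicking the computation in the proof of Lemma~\ref{evenintegerlemma}, where exactly the congruence $a^2 p' + b^2 q' + c^2 r' \equiv 0$ played the analogous role in Case 3. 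That gives $f(\chi) \equiv 0 \pmod d$.

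For the second factor of $d$, I would write $f(\chi) = d\cdot g$ and analyze $g$ modulo $d$. The cleanest route is to use the special form of $f(\chi)$ recorded just before the lemma in the case $a+b+c=d$, namely $f(\chi) = d^2 + d\epsilon + a(d-a)p + b(d-b)q + (a+b)(d-(a+b))r$; here the first two terms are manifestly divisible by $d$ (and the first by $d^2$), so it remains to see $a(d-a)p + b(d-b)q + (a+b)(d-(a+b))r$ is divisible by $d^2$. Expanding each $x(d-x) = dx - x^2$ splits this into a $d$-divisible part $d(ap+bq+(a+b)r)$ and the part $-(a^2 p + b^2 q + (a+b)^2 r)$; the congruence $a^2 p + b^2 q + c^2 r \equiv 0 \pmod d$ from the first stage (with $c \equiv a+b$) handles the latter, and reducing $ap+bq+(a+b)r$ modulo $d$ using $pq+qr+pr\equiv 0$ handles the former. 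The general case $a+b+c \in \{d, 2d\}$ reduces to this after replacing $\chi$ by a suitable nonzero multiple, which does not change the cover and only permutes the $\sigma_k$.

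The main obstacle I anticipate is bookkeeping in the second stage: getting the two separate $d$-divisibility facts to line up so that their sum is genuinely divisible by $d^2$ rather than merely by $d$. The subtlety is that $a^2 p + b^2 q + c^2 r \equiv 0 \pmod d$ alone gives only one factor of $d$, so I must combine it with the relation $p+q+r \equiv$ (the reduction forced by $pq+qr+pr\equiv 0$) in just the right linear combination, exactly as the displayed identity $\tfrac{d^2}{2}\sigma_1 = d^2(p'+q'+r') - d(a^2p'+b^2q'+c^2r')$ does in Lemma~\ref{evenintegerlemma}. Once that identity is set up correctly the divisibility by $d^2$ is immediate, but locating the correct combination is where the care is needed; I would cross-check against the explicit reformulation of $f(\chi)$ for $a+b+c=d$ to make sure no stray factor of $d$ is lost.
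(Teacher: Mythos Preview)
Your proposal has a genuine gap: you never use that $d^2$ (not merely $d$) divides $pq+qr+pr$. In the paper's setting $|pq+qr+pr|=D^2$ with $d\mid D$, so $d^2\mid pq+qr+pr$, and this is essential. Without it the lemma is simply false: for instance with $p=q=r=1$ and $d=3$ (so $pq+qr+pr=3$), the character with $a=b=c=1$, $\epsilon=2$ gives $f(\chi)=21$, which is not divisible by $9$.

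Concretely, your second stage breaks in two places. First, you write ``the first two terms are manifestly divisible by $d$ (and the first by $d^2$), so it remains to see $a(d-a)p+b(d-b)q+(a+b)(d-(a+b))r$ is divisible by $d^2$.'' But $d\epsilon$ is divisible by $d$ and \emph{not} by $d^2$ (since $0<\epsilon<d$), so it cannot be discarded. Second, after splitting $x(d-x)=dx-x^2$ you claim the congruence $a^2p+b^2q+c^2r\equiv 0\pmod d$ ``handles'' the quadratic part; but what is actually needed is divisibility by $d^2$, and in fact $a^2p+b^2q+c^2r\equiv -2d\epsilon\pmod{d^2}$ in general, so mod-$d$ information is not enough. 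Likewise your claim that $ap+bq+(a+b)r\equiv 0\pmod d$ is incorrect: using $ap\equiv bq\equiv -\epsilon$ and $(a+b)r\equiv -cr\equiv\epsilon$ one gets $ap+bq+(a+b)r\equiv -\epsilon\pmod d$.

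The paper's proof avoids these pitfalls by a different decomposition. It uses the four congruences $a+b+c\equiv ap+\epsilon\equiv bq+\epsilon\equiv cr+\epsilon\equiv 0\pmod d$ to write
\[
f(\chi)=d\big[(a+b+c)+(ap+\epsilon)+(bq+\epsilon)+(cr+\epsilon)\big]-g(\chi),
\]
with $g(\chi)=\epsilon(a+b+c)+a(ap+\epsilon)+b(bq+\epsilon)+c(cr+\epsilon)$; the first bracket is visibly $\equiv 0\pmod d$, and for $g(\chi)$ one substitutes $ap+\epsilon=k_1d$, etc., clears denominators by $pqr$ (legitimate since $\gcd(d,pqr)=1$), and obtains $pqr\,g(\chi)=d^2(k_1^2qr+k_2^2pr+k_3^2pq)-(pq+qr+pr)\epsilon^2$, which is divisible by $d^2$ precisely because $d^2\mid pq+qr+pr$. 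That last step is the missing idea in your outline.
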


\begin{proof}
First, recall that $H_1(\Sigma_2(K))$ is presented by linking matrix $A$, and so our $a,b,c,\epsilon$ values must satisfy
\[ a+b+c \equiv ap+\epsilon \equiv bq+\epsilon \equiv cr+\epsilon \equiv 0 \mod d.\] 
We can rewrite $f(\chi)$ as 
\begin{align*}
f(\chi)&=d\left[ (a+b+c)+ (ap+\epsilon) + (bq+ \epsilon) + (cr+ \epsilon)\right] \\
    & \quad \quad - \left[\epsilon(a+b+c)+ a(ap+\epsilon)+ b(bq+\epsilon)+c(cr+\epsilon)\right].
\end{align*}
The first term can immediately be seen to be divisible by $d^2$, and so it suffices to show that $g(\chi)=\epsilon(a+b+c)+ a(ap+\epsilon)+ b(bq+\epsilon)+c(cr+\epsilon)$ is also divisible by $d^2$. Writing $ap+\epsilon=k_1d$, $bq+\epsilon=k_2d$, and $cr+\epsilon=k_3d$ for $k_1, k_2, k_3 \in \mathbb{Z}$, we have
\begin{align*}
g(\chi)&= a(ap+\epsilon +\epsilon)+ b(bq+\epsilon +\epsilon)+c(cr+\epsilon+ \epsilon) \\
&= \frac{k_1 d - \epsilon}{p} (k_1d+\epsilon)+\frac{k_2 d - \epsilon}{q} (k_2d+\epsilon) +\frac{k_3 d - \epsilon}{r} (k_3d+\epsilon) \\
&= \frac{k_1^2d^2- \epsilon^2}{p}+\frac{k_2^2d^2- \epsilon^2}{q}+\frac{k_3^2d^2- \epsilon^2}{r}
\end{align*}
Note that since $d$ is relatively prime to all of $p,q,$ and $r$, we can multiply through by $pqr$ without changing the divisibility of $g(\chi)$ by $d^2$. We therefore have the desired result, since
\begin{align*}
g(\chi)pqr&= (k_1^2d^2- \epsilon^2)qr+(k_2^2d^2- \epsilon^2)pr+(k_3^2d^2- \epsilon^2)pq \\
&= d^2(k_1^2qr+ k_2^2qr+k_3^2pr) -(pq+qr+pr)\epsilon^2. 
\qedhere
\end{align*}
\end{proof}

\begin{prop}[Case 4]\label{generaldneqdividepqr}
Let $K=P(p,q,r)$ with $q \geq p>0$ and $r<0$ and $d$ be some prime dividing $pq+pr+qr$ which divides none of $p,q$ and $r$.
Suppose also that $r \neq -(4p+q)$ and that $p \not \equiv q$ mod $d$.
Then the Casson-Gordon signatures of $\Sigma_2(K)$ associated to characters to $\mathbb{Z}_d$ obstruct $K$'s topological sliceness. 

\end{prop}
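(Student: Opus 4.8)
The plan is to show that, under the stated hypotheses, some character to $\mathbb{Z}_d$ violates the Casson--Gordon bound. Since $d$ divides none of $p,q,r$, every nontrivial character $\chi\colon H_1(\Sigma_2(K))\to\mathbb{Z}_d$ sends all four meridians to nonzero elements, so Proposition~\ref{fchiprop} gives $\sigma_1(\Sigma_2(K),\chi)=3-\tfrac{2}{d^2}f(\chi)$, while Lemma~\ref{divisibilitylemma} shows $d^2\mid f(\chi)$. Hence $\sigma_1(\Sigma_2(K),\chi)$ is an \emph{odd} integer for every nontrivial $\chi$. Because $d\mid D$ but $d$ does not divide all of $p,q,r$, the Useful Fact applies: were $K$ topologically slice, then $|\sigma_1(\Sigma_2(K),\chi)|\le 1$, which for an odd integer forces $\sigma_1=\pm1$, i.e. $f(\chi)\in\{d^2,2d^2\}$, for \emph{every} nontrivial $\chi$. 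The proof therefore reduces to exhibiting a single nontrivial character with $f(\chi)\notin\{d^2,2d^2\}$.

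Next I would organize the characters. Reading the linking matrix $A$ mod $d$ shows $H_1(\Sigma_2(K),\mathbb{Z}_d)\cong\mathbb{Z}_d$ is generated by $\mu_0$, so the nontrivial characters are $\chi_\epsilon$ indexed by $\epsilon=\chi_\epsilon(\mu_0)\in\{1,\dots,d-1\}$, with $\chi_\epsilon(\mu_p),\chi_\epsilon(\mu_q),\chi_\epsilon(\mu_r)$ the lifts of $-\epsilon\bar p,-\epsilon\bar q,-\epsilon\bar r$ to $\{1,\dots,d-1\}$ (inverses mod $d$). Since scaling $\chi$ by $-1$ preserves $\sigma_1$ (scaling permutes the family $\{\sigma_k\}$ and $\sigma_k=\sigma_{d-k}$ by conjugation symmetry) while interchanging the two possible values $d$ and $2d$ of $a+b+c$, I may restrict to the $(d-1)/2$ characters with $a+b+c=d$. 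For these the formula recorded after Proposition~\ref{fchiprop} specializes to $\sigma_1(\Sigma_2(K),\chi)=1-\tfrac{2}{d^2}g(\chi)$ with $g(\chi)=d\epsilon+a(d-a)p+b(d-b)q+c(d-c)r$ and $c=d-a-b$, so the slice hypothesis becomes $g(\chi)\in\{0,d^2\}$. The parity remark following Proposition~\ref{fchiprop} pins this down exactly: since $a+b+c=d$ is odd, $f(\chi)$ and $\epsilon$ have opposite parities, whence (assuming sliceness) $g(\chi)=0$ when $\epsilon$ is even and $g(\chi)=d^2$ when $\epsilon$ is odd.

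The core step is then to combine a small explicit collection of such characters, in the style of the equations $\eq(0),\eq(1),\dots$ in the proof of Proposition~\ref{dividespqr}. Concretely I would take a convenient generating character together with one or two of its multiples arranged again to have meridian-sum $d$, substitute the value of $g$ now forced by the parity of $\epsilon$, and subtract the resulting equations so as to cancel the $d\epsilon$ term and the quadratic contributions. What survives is a linear relation among $p,q,r$ modulo $d$; the expectation, matching the way Cases~5 and~6 are excised, is that such a relation can persist for all admissible characters only if either $p\equiv q\pmod d$ or $r=-(4p+q)$ — the former being precisely the hypothesis excluded by Proposition~\ref{chipischiqprop} and the latter that excluded by Proposition~\ref{exceptionalprop}. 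Since both are forbidden here, some character must have $g(\chi)\notin\{0,d^2\}$, giving $|\sigma_1(\Sigma_2(K),\chi)|>1$ and contradicting Corollary~\ref{mainobstructioncor}.

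The main obstacle I anticipate is the bookkeeping of the reductions mod $d$: the lifts $a,b,c$ (and the auxiliary integers $\alpha=(ap+\epsilon)/d$, $\beta=(bq+\epsilon)/d$, $\gamma=(cr+\epsilon)/d$) are sawtooth functions of $\epsilon$, so the characters must be chosen carefully to guarantee both that $a+b+c=d$ and that passing to multiples does not wrap around mod $d$ in an uncontrolled way. The genuinely delicate point is to verify that, after all cancellations, the \emph{only} degenerate relations capable of forcing $g(\chi)\in\{0,d^2\}$ simultaneously for every character are exactly $p\equiv q\pmod d$ and $r=-(4p+q)$, and hence that the present hypotheses leave no admissible character behind.
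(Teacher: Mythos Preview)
Your setup is correct and matches the paper exactly through the point where CG-sliceness forces $f(\chi)\in\{d^2,2d^2\}$ (equivalently $g(\chi)\in\{0,d^2\}$ on the $a+b+c=d$ side, determined by the parity of $\epsilon$). But from there the proposal stops being a proof: the ``core step'' is only described programmatically (``take a convenient generating character together with one or two of its multiples\ldots subtract\ldots what survives is a linear relation among $p,q,r$ modulo $d$'') and never executed. This is precisely the part with content, and two features of the paper's argument do not fit your sketch.

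First, the paper's choice of characters is specific, not generic: one normalizes $\chi_1(\mu_r)=1$, so $c=1$, $b=d-a-1$, and then takes $\chi_2=2\chi_1$. The hypothesis $p\not\equiv q\pmod d$ is used right here, to exclude $a=(d-1)/2$ (which would make $\chi_1(\mu_p)=\chi_1(\mu_q)$). One then writes the two equations for $g(\chi_1),g(\chi_2)$, forms the combinations $\eq(1)-\eq(2)$ and $\tfrac1d(2\eq(1)-\eq(2))$, and carries out a four-way case split according to whether $a$ and $\epsilon$ are below or above $d/2$ (controlling the wrap-around you flagged). Second, the resulting relations are over $\mathbb{Z}$, not merely $\bmod\ d$: the combinations yield, e.g., $r=-(a^2p+(a+1)^2q)$ and $\epsilon=ap+(a+1)q+r$ (or the ``reflected'' versions with $d-a$), and the contradictions come from integer inequalities comparing these with $D$ and with $d$. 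The exceptional relation $r=-(4p+q)$ appears as a genuine integer identity in exactly one sub-case ($a=d-2$), not as a congruence, so your expectation that the obstructions emerge as mod-$d$ relations is off. In short: right strategy, but the proof is missing --- you need the explicit normalization $c=1$, the two-character elimination, and the four-case inequality analysis.
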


\begin{proof} 
Assume for the sake of contradiction that $K$ is CG-slice at $d$. Since $H_1(\Sigma_2(K), \mathbb{Z}_d)$ is cyclic, for any $\chi: H_1(\Sigma_2(K)) \to \mathbb{Z}_d$ we must have 
\[ \left|\sigma_1(\Sigma_2(K), \chi)\right|=\left|3- \frac{2}{d^2}f(\chi)\right| \leq 1.\]
and so by Lemma~\ref{divisibilitylemma} we have $f(\chi)= d^2 \text{ or } 2d^2$. 

We will work with two characters. Note that our formula for $f(\chi)$ uses the unique integer lifts of $\chi(\mu_i)$ to $\{1, \dots, d-1\}$, so we will be careful to only write $\chi(\mu_i)=x$ if $0<x<d$. 
We define $\chi_1$ to have $\chi_1(\mu_r)=1$, and $\chi_2=2\chi_1$. It follows that $\chi_1(\mu_0)$ is the unique integer $\epsilon$ in $(0,d)$ such that $\epsilon + r \equiv 0 \mod d$, $\chi_1(\mu_p)$ is the unique integer $a$ in $(0,d)$ such that $\epsilon + ap \equiv 0 \mod d$, and $\chi_1(\mu_q)= d-a-1$. 
%Note that since $\chi_1(\mu_q)\neq 0$, $a\neq d-1$. We also consider $\chi_2=2\chi_1$, with $\chi_2(\mu_r)=2$, $\chi_2(\mu_0)\in \{2\epsilon, 2\epsilon-d\}$, $\chi_2(\mu_p) \in \{2a, 2a-d\}$, and $\chi_2(\mu_q)\in \{d-2a-2, 2d-2a-2\}$. 
Note that $\chi_i(\mu_p)+ \chi_i(\mu_q)+\chi_i(\mu_r)=d$, so $f(\chi_i)$ has the opposite parity as $\chi_i(\mu_0)$ for $i=1,2$. We now  define some convenient notation:
\begin{align*}
\left[\begin{array}{c} x_1 \\ x_2 \end{array} \right]_y=
\left\{\begin{array}{l} x_1  \text{ if } 0<y<\frac{d}{2} \\
x_2 \text{ if } \frac{d}{2}<y<d \end{array}\right. 
\text{ and }
\left[\begin{array}{c} x_1 \\ x_2 \end{array} \right]_{p(y)}=
\left\{\begin{array}{l} x_1  \text{ if $y$ is even} \\
x_2 \text{ if $y$ is odd} \end{array}\right. .
\end{align*}
We therefore have\footnote{Note that if $a=\frac{d-1}{2}$  then $\chi_1$ sends both $\mu_p$ and $\mu_q$ to $\frac{d-1}{2}$. But this implies that $p \equiv q \mod d$, which we have assumed is not the case.} $\chi_2(\mu_p)= \left[\begin{array}{c} 2a \\ 2a-d \end{array} \right]_a$, $\chi_2(\mu_q)= \left[\begin{array}{c} d-2a-2\\ 2d-2a-2 \end{array} \right]_a$,
$\chi_2(\mu_0)=\left[\begin{array}{c} 2\epsilon \\ 2\epsilon-d \end{array} \right]_\epsilon$, $f(\chi_1)=\left[\begin{array}{c} d^2 \\ 2d^2 \end{array} \right]_{p(\epsilon)}$, and $f(\chi_2)=\left[\begin{array}{c} d^2 \\ 2d^2 \end{array} \right]_\epsilon$.  
We therefore have the following two equations coming from our formulas for $f(\chi_1)$ and $f(\chi_2)$:
\begin{align}
 &\left[\begin{array}{c} 0 \\ d^2 \end{array} \right]_{p(\epsilon)}=
d\epsilon + a(d-a)p + (a+1)(d-a-1)q+(d-1)r
\\
&\left[\begin{array}{c} 0 \\ d^2 \end{array} \right]_\epsilon=
d\epsilon+
 \left[\begin{array}{c} a(d-2a)p + (a+1)(d-2a-2)q \\ (2a-d)(d-a)p+(2+2a-d)(d-a-1)q \end{array} \right]_a +(d-2)r
\end{align} 
Consider $\eq(3)=\eq(1)-\eq(2)$ and $\eq(4)= \frac{1}{d} \left(2\eq(1)-\eq(2)\right)$:
\begin{align}
 \left[\begin{array}{c} 0 \\ d^2 \end{array} \right]_{p(\epsilon)}-\left[\begin{array}{c} 0 \\ d^2 \end{array} \right]_\epsilon
=\left[\begin{array}{c} a^2p+(a+1)^2q \\ (d-a)^2p + (d-a-1)^2q \end{array}\right]_a+r
\\
\left[\begin{array}{c} 0 \\ 2d \end{array} \right]_{p(\epsilon)} -\left[\begin{array}{c} 0 \\ d \end{array} \right]_\epsilon
=
\epsilon+\left[\begin{array}{c} ap+(a+1)q\\(d-a)p+(d-a-1)q\end{array}\right]_a+r
\end{align}
Note that the left side of $\eq(4)$ is even exactly when $\epsilon<d/2$, while the right side has the same parity as $\epsilon$. 
So we can assume $\epsilon<d/2$ if and only if $\epsilon$ is even, and  $\eq(3)$ and $\eq(4)$ simplify to the following:
 \begin{align}
0=\left[\begin{array}{c}a^2p+(a+1)^2q \\ (d-a)^2p + (d-a-1)^2q \end{array}\right]_a +r
\end{align}

\begin{align}
\left[\begin{array}{c} 0 \\ d \end{array} \right]_\epsilon
=
\epsilon+ \left[\begin{array}{c} ap+(a+1)q\\(d-a)p+(d-a-1)q\end{array}\right]_a +r
\end{align}

We can use Equation~(5) to see that if $a<d/2$ then $D=ap+(a+1)q$ and if $a>d/2$ then $D=(d-a)p+(d-a-1)q$. 
We will now split into cases, and show that each leads to a contradiction by using Equation (5) to write $r$ in terms of $a,d,p,q$ and substituting this expression into Equation (6). Note that since $d$ divides $D$, we certainly have that $d \leq D$. 
\medskip

\noindent {\bf Case 1:} $a, \epsilon< d/2$.

 A little rewriting gives that $\epsilon= a^2(p+q)+(q-p)$, and so that  
\[2a^2(p+q)<2a^2(p+q)+2(q-p)= 2 \epsilon<d \leq D= ap+(a+1)q.\]
It follows that $(2a^2-a)p+(2a^2-a-1)q<0$, which gives the desired contradiction. 
\smallskip

\noindent{\bf Case 2:} $a<d/2<\epsilon$.
\[0<d-\epsilon= -a(a-1)p-a(a+1)q<0 \]

\noindent{\bf Case 3:} $\epsilon<d/2<a$.

First, suppose that $a=d-2$. Then Equation (5) implies that $r=-(4p+q)$, which we have assumed is not the case. So we can assume that $a<d-2$, and so
\[ D=(d-a)p+(d-a-1)q< (d-a)(d-a-1)p+(d-a-1)(d-a-2)q= \epsilon<d. \]

\noindent{\bf Case 4:} $d/2<a,\epsilon$.

As in Case 3, we can assume that $a<d-2$ and so
\[0<d-\epsilon=-(d-a)(d-a-1)p-(d-a-1)(d-a-2)q<0. \qedhere \]
\end{proof}

\begin{prop}[Case 5]\label{exceptionalprop}
Suppose $K=P(p,q,r)$ for $r=-(4p+q)$. Suppose $d$ is a prime that divides $pq+pr+qr$ but none of $p,q,$ and $r$.  
Then either $K=P(1,q,-(q+4))$, in which case $K$ is ribbon, or the Casson-Gordon signatures of $\Sigma_2(K)$ corresponding to characters to $\mathbb{Z}_d$ obstruct $K$'s topological sliceness.
\end{prop}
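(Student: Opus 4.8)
The plan is to exploit the special arithmetic forced by $r=-(4p+q)$ and then obstruct sliceness with a single well-chosen character, paralleling the setup of Proposition~\ref{generaldneqdividepqr} but handling the degenerate case that was explicitly excluded there. First I would record the key identity: since $pq+qr+pr=-(2p+q)^2$, we have $D=2p+q$, so the hypothesis $d\mid D$ gives $q\equiv -2p \pmod d$ and hence $r=-(4p+q)\equiv -2p\equiv q\pmod d$. In particular $q+r=-4p$ exactly, which is what will collapse the formula for $f$. If $p=1$ then $r=-(q+4)$ and $K=P(1,q,-(q+4))$ is one of the ribbon knots of Theorem~\ref{maintheorem}, so it remains to show that when $p\ge 2$ the Casson-Gordon signatures obstruct sliceness.

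Since $d\nmid p,q,r$, the group $H_1(\Sigma_2(K),\mathbb{Z}_d)$ is cyclic, so by the Useful Fact CG-sliceness at $d$ would force $|\sigma_1(\Sigma_2(K),\chi)|\le 1$, and hence $f(\chi)\in\{d^2,2d^2\}$ for every character $\chi$ (combining Lemma~\ref{divisibilitylemma} with Proposition~\ref{fchiprop}). I would then test the character $\chi$ determined by $\chi(\mu_p)=1$; the defining relations force $\chi(\mu_q)=\chi(\mu_r)=\frac{d-1}{2}$ and $\chi(\mu_0)=\epsilon\equiv -p\pmod d$, all nonzero, with $\chi(\mu_p)+\chi(\mu_q)+\chi(\mu_r)=d$. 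Feeding these into the $a+b+c=d$ form of $f$ and substituting $q+r=-4p$, all of the $q$- and $r$-dependence cancels against the $p$-term, leaving the clean expression $f(\chi)=d^2+d(\epsilon-(d-1)p)=d^2\left(1-\lfloor (d-1)p/d\rfloor\right)$, where the last equality uses that $\epsilon$ is the residue of $(d-1)p$ modulo $d$ together with $d\nmid p$.

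Finally I would read off the conclusion from this formula. When $p=1$ we get $f(\chi)=d^2$ and $\sigma_1=1$, consistent with ribbon-ness and producing no obstruction; but when $p\ge 2$ and $d\ge 3$ we have $(d-1)p\ge 2(d-1)\ge d$, so $\lfloor (d-1)p/d\rfloor\ge 1$, forcing $f(\chi)\le 0$ and therefore $\sigma_1(\Sigma_2(K),\chi)=3-\frac{2}{d^2}f(\chi)\ge 3>1$, violating the CG-sliceness bound. This single character handles all admissible $d$ uniformly, including $d=3$ (where $p\equiv q\pmod 3$), so no separate treatment is needed. I expect the main subtlety to be bookkeeping: verifying that the chosen $\chi$ is a genuine $\mathbb{Z}_d$-character on $H_1(\Sigma_2(K))$ (in particular that the residues sum to $d$ rather than $2d$, so that the simplified $f$-formula applies) and that $\epsilon-(d-1)p$ equals exactly $-d\lfloor (d-1)p/d\rfloor$; once these are pinned down the inequality is immediate.
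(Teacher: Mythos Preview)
Your argument is correct. The computation $f(\chi)=d^2+d(\epsilon-(d-1)p)=d^2\bigl(1-\lfloor (d-1)p/d\rfloor\bigr)$ is accurate (the key cancellation comes exactly from $q+r=-4p$), and the inequality $\lfloor (d-1)p/d\rfloor\ge 1$ for $p\ge 2$, $d\ge 3$ immediately forces $f(\chi)\le 0$ and hence $\sigma_1\ge 3$.

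Your route is a mild simplification of the paper's. The paper uses \emph{two} characters, $\chi$ with $(\chi(\mu_p),\chi(\mu_q),\chi(\mu_r))=(d-2,1,1)$ and $\chi'=\tfrac{d-1}{2}\chi$ with values $(1,\tfrac{d-1}{2},\tfrac{d-1}{2})$, and argues in the style of Proposition~\ref{generaldneqdividepqr} that the system $f(\chi),f(\chi')\in\{d^2,2d^2\}$ forces $p=1$. Your character is precisely the paper's $\chi'$, and you observe that the identity $q+r=-4p$ collapses $f(\chi')$ to a closed form depending only on $p$ and $d$, so the second character is unnecessary. What the paper's two-character setup buys is methodological uniformity with Case~4; what your single-character computation buys is that the obstruction is visible in one line, with no case analysis on the parity or size of $\epsilon$.
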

Note that $K=P(1,q, -(q+4))$ is a 2-bridge knot. If we write $q=2k+1$, then $K$ is a generalized twist knot corresponding to the fraction $-\frac{4(k+1)(k+2)+1}{2(k+1)}$ and has been known to be ribbon at least since \cite{CG78}. 
\begin{proof}
Let $\chi$ be the character sending $\mu_p$ to $d-2$, $\mu_q$ and $\mu_r$ to 1, and $\mu_0$ to $\epsilon$. Then $\chi'= \frac{d-1}{2} \chi$ sends $\mu_p$ to 1, $\mu_q$ and $\mu_r$ to $\frac{d-1}{2}$, and $\mu_0$ to $\epsilon'$. Arguments as in the proof of Proposition~\ref{generaldneqdividepqr} show that if $p>1$ then at least one of $|\sigma_1(\Sigma_2(K),\chi)|$ and $|\sigma_1(\Sigma_2(K),\chi')|$ is strictly larger than 1, and hence that $K$ is not CG slice at $d$. 
% 
%
%Suppose for the sake of a contradiction that $p>1$ and that $K$ is CG-slice at $d$. Observe that $pq+pr+qr=-(2p+q)^2$, which implies that $2p+q \equiv 0 \mod d$ and further that $r=-(4p+q) \equiv q \mod d$. 
%
%Let $\chi_1$ be the character sending $\mu_r$ to 1 and hence $\mu_p$ to $d-2$, $\mu_q$ to $1$, and $\mu_0$ to $\epsilon$, the unique representative of $2p$ mod $d$ such that $0<\epsilon<d$. 
%Then we have that 
%\begin{align*}
%\sigma_1(\Sigma_2(K),\chi) &= 3 - \frac{2}{d^2} \left(d^2(d+\epsilon)+ 2(d-2)p+(d-1)q-(d-1)(4p+q)\right)\\
%&= 1 - \frac{2}{d} \left(\epsilon-2p \right)
%\end{align*}
%Since $|\sigma_1(\Sigma_2(K),\chi)|\leq 1$ we must have 
%$0<2p \leq \epsilon <d$ and hence (since $\epsilon \equiv 2p \mod d$) we can conclude that $\epsilon=2p$. 
%
%Consider the character $\chi= \frac{d-1}{2} \chi_1$ sending $\mu_p$ to $1$, $\mu_q$ to $\frac{d-1}{2}$, $\mu_r$ to $\frac{d-1}{2}$, and $\mu_0$ to $(d-1)p-kd$ for some $k \in \mathbb{Z}$. Since $(d-1)p-kd<d$  and $p>1$, we know that $k$ is positive.
% Observe that $f(\chi)=d^2-d^2k$, and so $
%\sigma_1(\Sigma_2(K), a\chi) = 3 - \frac{2}{d^2}(d^2-d^2k)=1+2k>1$, and we have our desired contradiction. 
\end{proof}

\begin{prop}[Case 6]\label{chipischiqprop}
Suppose that $d$ divides $pq+pr+qr$ but none of $p$, $q$, and $r$, that
 $p \equiv q \mod d$, and that $d\neq3$. 
Then the Casson-Gordon signatures of $\Sigma_2(K)$ associated to characters to $\mathbb{Z}_d$ obstruct $K$'s topological sliceness. 
\end{prop}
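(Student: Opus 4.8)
The plan is to argue by contradiction, assuming $K = P(p,q,r)$ is CG-slice at $d$, where $d \mid pq+pr+qr$, $d$ divides none of $p,q,r$, $p \equiv q \bmod d$, and $d \neq 3$. As in Proposition~\ref{generaldneqdividepqr}, since $H_1(\Sigma_2(K), \mathbb{Z}_d)$ is cyclic, every character $\chi$ to $\mathbb{Z}_d$ must satisfy $|\sigma_1(\Sigma_2(K),\chi)| = |3 - \frac{2}{d^2}f(\chi)| \leq 1$, and so by Lemma~\ref{divisibilitylemma} we have $f(\chi) \in \{d^2, 2d^2\}$ for every nontrivial $\chi$. The extra hypothesis $p \equiv q \bmod d$ distinguishes this case from Case 4: now a character sending $\mu_p$ and $\mu_q$ to the \emph{same} value is available, and the previous argument (which exploited $p \not\equiv q$ to rule out $a = \frac{d-1}{2}$) breaks down. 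So the first step is to isolate the characters that $p \equiv q$ makes newly admissible and extract constraints from them.

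\medskip

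\noindent \textbf{Extracting equations from well-chosen characters.} I would parametrize characters by the value $a = \chi(\mu_p)$ together with the forced value $\epsilon = \chi(\mu_0)$, using the relations $a+b+c \equiv 0$ and $ap + \epsilon \equiv bq + \epsilon \equiv cr + \epsilon \equiv 0 \bmod d$. Because $p \equiv q \bmod d$, choosing $\chi(\mu_p) = \chi(\mu_q) = a$ forces $\chi(\mu_r) = d - 2a$ and gives a clean value of $f(\chi)$ via the specialization formula
\[
f(\chi) = d^2 + d\epsilon + a(d-a)p + a(d-a)q + 2a(d-2a)r
\]
when $a+b+c = d$ (adapting the displayed formula after Proposition~\ref{fchiprop}). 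I would then play this character off against its double $2\chi$, exactly as $\eq(1)$ through $\eq(6)$ are formed in Case 4: subtracting the two $f$-equations kills the linear-in-$\epsilon$ and constant $d^2$ terms, and dividing the appropriate combination by $d$ isolates a low-degree Diophantine relation among $p$, $q$, $r$, and $a$. The parity bookkeeping with the $[\begin{smallmatrix} x_1 \\ x_2\end{smallmatrix}]_y$ and $[\begin{smallmatrix} x_1 \\ x_2\end{smallmatrix}]_{p(y)}$ brackets should carry over with minor modification, pinning down whether $\epsilon < d/2$ and whether $a < d/2$.

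\medskip

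\noindent \textbf{Deriving the contradiction.} From Equation~(5)-type relations I expect to solve for $r$ in terms of $a,d,p,q$, and substituting into the Equation~(6)-type relation should, after using $p \equiv q$, collapse the two unknowns $p,q$ into essentially one. The target contradiction is the same flavor as in Case 4: an inequality of the form $0 < d - \epsilon < 0$ or a forced relation like $r = -p$ or $p+q = 0$ that violates the standing hypotheses ($r \neq -p, -q$, and $p,q > 0 > r$). The exclusion $d \neq 3$ must enter precisely because, as in Case~2 of Proposition~\ref{dividespqr}, the collapse forces $d = 3$ in a degenerate sub-case; ruling out $d=3$ here is exactly what lets the inequality go through strictly. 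I would organize the endgame as a short case split on the location of $a$ and $\epsilon$ relative to $d/2$ and verify the contradiction in each.

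\medskip

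\noindent \textbf{Main obstacle.} The hard part will be the parity and range analysis: because $p \equiv q$ permits $a$ to sit symmetrically, the bracket-valued constants in the $f$-difference equations can land differently than in Case~4, and I expect one sub-case where the naive argument yields $d = 3$ rather than an outright contradiction. Handling that sub-case — showing it is precisely the $d=3$ exception and that the hypothesis $d \neq 3$ removes it — is the delicate point. A secondary subtlety is confirming that the characters I choose genuinely vanish on \emph{some} metabolizer (so that CG-sliceness really applies); since $H_1(\Sigma_2(K),\mathbb{Z}_d)$ is cyclic, the Useful Fact guarantees every $\chi$ to $\mathbb{Z}_d$ vanishes on every metabolizer, so this should pose no real difficulty, but I would flag it explicitly to keep the obstruction argument airtight.
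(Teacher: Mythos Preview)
Your proposal is correct and follows essentially the same approach as the paper: the paper considers exactly the characters $\chi_i$ with $\chi_i(\mu_p)=\chi_i(\mu_q)=i$ and $\chi_i(\mu_r)=d-2i$ for $i=1,2$ (i.e., your $\chi$ with $a=1$ and its double), and then runs the same $f(\chi_1)$-vs-$f(\chi_2)$ arithmetic modeled on Proposition~\ref{generaldneqdividepqr}. One small refinement: the hypothesis $d\neq 3$ enters not at the end of the arithmetic but right at the setup --- it is exactly what guarantees $d-2i>0$ for $i=2$, so that $\chi_2$ lands in the regime $a+b+c=d$ where the simplified formula for $f(\chi)$ applies; once that is secured, the contradiction falls out as in Case~4 without a separate degenerate sub-case.
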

\begin{proof} 
For $i=1,2$, consider the characters $\chi_i:H_1(\Sigma_2(K)) \to \mathbb{Z}_d$ defined by $\chi_i(\mu_p)=\chi_i(\mu_q)=i$, $\chi_i(\mu_r)=d-2i$, and $\chi_i(\mu_0)=\epsilon_i$. (Note that since $d \neq 3$ we have that $d-2i>0$ for $i=1,2$.) Arguments as in the proof of Proposition~\ref{generaldneqdividepqr} show that at least one of $|\sigma_1(\Sigma_2(K),\chi_i)|$ is strictly larger than 1, and hence that $K$ is not CG-slice at $d$. 
%
%Suppose for a contradiction that $K$ is CG-slice at $d$. 
%
%Since $p \equiv q \mod d$, there is $\chi: H_1(\Sigma_2(K)) \to \mathbb{Z}_d$ with $\chi(\mu_p)=\chi(\mu_q)=1$. 
%Observe that $\chi(\mu_r)=d-2$ and $\chi(\mu_0)$ is the unique integer with $0<\epsilon<d$ and $\epsilon \equiv 2r \mod d$. 
%Arguments as in the proof of Proposition~\ref{generaldneqdividepqr} show that 
%\begin{align*}
%\left[ \begin{array}{c}0\\d^2\end{array}
%\right]_{\epsilon}&=d \epsilon+ (d-1)p+(d-1)q+2(d-2)r,\\
%\left[ \begin{array}{c}0\\d^2 \end{array}
%\right]_{p(\epsilon)}&=d \epsilon+ (d-2)p+(d-2)q+2(d-4)r.
%\end{align*}
%
%Therefore $
%\left[ \begin{array}{c}0\\d^2\end{array}\right]_{\epsilon}-\left[ \begin{array}{c}0\\d^2 \end{array}
%\right]_{p(\epsilon)}
%=p+q+4r$
%and our usual parity argument implies that $0=p+q+4r$. Substituting this into our expression for $\sigma_1(\Sigma_2(K),\chi)$ implies that $\epsilon=2r$ or $\epsilon=2r+d$. Since $2r<0<\epsilon$, we must have $\epsilon=2r+d$. 
%
%Let $a=\frac{d-1}{2}$ and $\chi_a=a\chi$, with $\chi_a(\mu_p)=\chi_a(\mu_q)=a$, $\chi_a(\mu_r)=1$, and $\chi_a(\mu_0)=a(2r+d)-kd$ for some $k\in\mathbb{N}$. Since $\chi_a(\mu_0)>0$, we have that $1 \leq -r <(a-k)\left( 1- \frac{1}{d}\right)<(a-k)$.
%Finally, we compute $\sigma_1(\Sigma_2(K), \chi_a)$ and obtain our desired contradiction:
%\begin{align*}
%\sigma_1(\Sigma_2(K),\chi_a)&=3- \frac{2}{d^2}\left( d^2+ d(a(2r+d)-kd) + a(d-a)p+a(d-a)q+ 2a(d-2a)r\right)\\
%&=1 - \frac{2}{d^2} \left(d(a(2r+d) - k d) -4a(d-a)r+2a(d-2a)r\right) \\
%&=1 -2 \left(a - k\right)<-1. \qedhere
%\end{align*}
\end{proof}

\begin{prop} \label{powerof3}
Suppose that $K=P(p,q,r)$ has $p,q,$ and $r$ relatively prime, $|H_1(\Sigma_2(K)|=|pq+pr+qr|=3^{2n}$ for some $n \in \mathbb{N}$, and $p \equiv q \mod 3$.  
Then either $K$ is ribbon or the Casson-Gordon signatures associated to characters of order $3$ and $9$ obstruct $K$'s topological sliceness. 
\end{prop}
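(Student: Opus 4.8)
We are in the final residual case of the proof of Theorem~\ref{maintheorem}: the parameters $p,q,r$ are relatively prime with $p \equiv q \pmod 3$, and $|pq+pr+qr| = 3^{2n}$, so that $D = 3^n$. Since $p,q,r$ are coprime, $3$ cannot divide all three, and by the hypothesis $p \equiv q \pmod 3$ together with $pq+pr+qr \equiv 0 \pmod 3$ we deduce the precise residues of $p,q,r$ modulo $3$. Indeed, $p^2 + 2pr \equiv 0$ forces $p(p+2r) \equiv 0$, and since $3 \nmid p$ (otherwise $3 \mid q$, contradicting coprimality once we check $r$) we learn $r \equiv p \equiv q \pmod 3$, so all three are congruent mod $3$ and none is divisible by $3$. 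The plan is to exhibit explicit characters to $\mathbb{Z}_3$ and to $\mathbb{Z}_9$, compute $\sigma_1$ via Proposition~\ref{fchiprop} and the divisibility of Lemma~\ref{divisibilitylemma}, and show that CG-sliceness at both $3$ and $9$ simultaneously is impossible unless $K$ is ribbon.

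**The order-3 characters.** First I would dispose of $d=3$ by brute force. Here $H_1(\Sigma_2(K),\mathbb{Z}_3) \cong \mathbb{Z}_3 \oplus \mathbb{Z}_3$ when $n \geq 1$, and there are only finitely many characters to analyze. For each nonzero $\chi: H_1(\Sigma_2(K)) \to \mathbb{Z}_3$ sending $\mu_p,\mu_q,\mu_r$ to nonzero residues, the values $a,b,c \in \{1,2\}$ satisfy $a+b+c \equiv 0 \pmod 3$, forcing $(a,b,c) = (1,1,1)$ or $(2,2,2)$; by replacing $\chi$ with a multiple we may take $(a,b,c)=(1,1,1)$, and then $\sigma_1(\Sigma_2(K),\chi) = \tfrac{2}{9}(2p+2q+2r) = \tfrac{4}{9}(p+q+r)$ by the formula derived in Case~3. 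The metabolizer condition, analyzed exactly as in Lemma~\ref{evenintegerlemma}, constrains which characters can vanish on $M$; I would show that if every CG signature at $3$ vanishes then $p+q+r \equiv 0$ in a strong enough sense to feed into the order-9 computation. The key output of this step should be a congruence on $p,q,r$ modulo $9$ (or a linear relation) that survives to the next stage.

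**The order-9 characters and the main obstacle.** The heart of the argument is at $d=9$, where I would use Proposition~\ref{fchiprop} together with the divisibility $81 \mid f(\chi)$ from Lemma~\ref{divisibilitylemma} to force $f(\chi) \in \{81, 162\}$ whenever $K$ is CG-slice at $9$. Choosing characters analogous to those in Proposition~\ref{chipischiqprop} (sending $\mu_p,\mu_q$ to equal small values and $\mu_r$ to the forced residue), I would set up two or three equations of the form $f(\chi_i) \in \{81,162\}$, take integer combinations to eliminate the quadratic-in-$a$ terms exactly as in the passage from $\eq(1),\eq(2)$ to $\eq(5),\eq(6)$ in Proposition~\ref{generaldneqdividepqr}, and reduce to a small linear system in $p,q,r$. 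The main obstacle I anticipate is that, unlike the clean generic Case~4, the congruence $p \equiv q \pmod d$ collapses the two characters $\chi_1,\chi_2$ that Proposition~\ref{chipischiqprop} relies on, which is exactly why $d=3$ was excluded there; the resolution is that the \emph{combination} of the order-3 relation from the previous step with the order-9 equations overdetermines $(p,q,r)$, and I would show the only solutions are the ribbon families $\pm P(p,q,-q)$ (forcing $pq+qr+pr = -q^2$, incompatible with a positive power of $9$ unless degenerate) or $\pm P(1,q,-q-4)$.

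**Closing the argument.** Finally, having derived a linear relation among $p,q,r$ from the order-9 signatures, I would substitute back into the determinant constraint $|pq+pr+qr| = 3^{2n}$ and check that every solution either violates the coprimality/parity hypotheses or else is ribbon. Concretely, the relation $r = -(4p+q)$ (the $\eq(5)$-style output) combined with $r \equiv p \equiv q \pmod 3$ and $\gcd(p,q,r)=1$ should pin $p=1$, yielding $K = P(1,q,-q-4)$, which is ribbon by the remark following Proposition~\ref{exceptionalprop}; any competing relation $p+q=0$ or $p+r=0$ contradicts the sign conventions $p,q>0 > r$ or the coprimality. This completes the proof that in the remaining case $K$ is either ribbon or obstructed, and hence finishes Theorem~\ref{maintheorem}.
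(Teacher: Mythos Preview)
Your proposal has genuine errors and also diverges structurally from the paper's argument in a way that leads to trouble.

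First, two concrete mistakes. You assert $H_1(\Sigma_2(K),\mathbb{Z}_3)\cong\mathbb{Z}_3\oplus\mathbb{Z}_3$, but since $p,q,r$ are pairwise relatively prime (any prime dividing two of them would divide $pq+qr+rp$ and hence equal $3$, which you correctly showed divides none of them), $H_1(\Sigma_2(K))$ is cyclic of order $3^{2n}$ and so $H_1(\Sigma_2(K),\mathbb{Z}_3)\cong\mathbb{Z}_3$. More seriously, you compute $\sigma_1$ at $d=3$ using ``the formula derived in Case~3,'' but that formula was obtained under the hypothesis that $d$ divides all of $p,q,r$, which forces $\chi(\mu_0)=0$. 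Here $3$ divides \emph{none} of $p,q,r$, so $\chi(\mu_0)\neq 0$ and you must use Proposition~\ref{fchiprop} instead; your expression $\sigma_1=\tfrac{4}{9}(p+q+r)$ is missing the $\epsilon$-contribution and is simply wrong.

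Second, the overall architecture is off. The paper does not combine order-$3$ and order-$9$ information; it splits on $n$. When $n\geq 2$, the group $H_1(\Sigma_2(K))\cong\mathbb{Z}_{3^{2n}}$ admits characters onto $\mathbb{Z}_9$ that automatically vanish on the unique metabolizer, and the arguments of Propositions~\ref{generaldneqdividepqr}, \ref{exceptionalprop}, and \ref{chipischiqprop} go through verbatim with $d=9$ (those arguments only need $d$ to be a prime power, and the case split there is according to whether $r=-(4p+q)$ and whether $p\equiv q\bmod 9$). When $n=1$, we have $|H_1|=9$, so any character to $\mathbb{Z}_9$ vanishing on the order-$3$ metabolizer already factors through $\mathbb{Z}_3$; there is no genuinely order-$9$ information available, and the paper instead uses the explicit constraint $pq+qr+rp=-9$ together with a direct variant of the usual arithmetic to produce a character to $\mathbb{Z}_3$ with $\sigma_1<-1$. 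Your plan of ``extracting a congruence at $d=3$ and feeding it into $d=9$'' is unnecessary for $n\geq 2$ and unavailable for $n=1$, and the final claim that the resulting linear relations ``should pin $p=1$'' is not justified.
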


\begin{proof}
First, suppose that $n \geq 2$.  Since $p,q,$ and $r$ are pairwise relatively prime, $H_1(\Sigma_2(K))$ is cyclic, and any character to $\mathbb{Z}_{3^n}$ will vanish on the unique metabolizer for the linking form. Proposition~\ref{qhs3recognition} implies that the associated covers are rational homology spheres, and so it suffices to find such a character $\chi$ with $|\sigma_1(\Sigma_2(K),\chi)|>1$. 
The arguments of Propositions~\ref{generaldneqdividepqr}, \ref{exceptionalprop}, and \ref{chipischiqprop} applied to $d=9$ (according to whether$r=-(4p+q)$ and whether $p \equiv q \mod 9$) show that this is the case. 

Now suppose that $n=1$ and so $pq+pr+pq=-9$ and $r=-\frac{pq+9}{p+q}$. A slight variation on our usual arithmetic arguments then implies that $\sigma_1(\Sigma_2(K),\chi)<-1$ for some $\chi:H_1(\Sigma_2(K)) \to \mathbb{Z}_3$, and hence that $K$ is not CG-slice at $d=3$. 
%
%Suppose without loss of generality that $p<q$. Note that if $p=1$  then $r=-\frac{q+9}{q+1}$ and so $q+1$ divides $q+9$. The only such $q$ are $q=1$ (which does not occur since $1 \leq p <q$), $q=3$ (which does not occur since $q \equiv p \mod 3$), and $q=7$ (which implies that $r=-2$ and hence does not occur). 
%So we can assume that $3 \leq p$. 
%It follows that  
%\begin{align*}
%3 \leq p <p+ \frac{q^2-9}{p+q} = p+q-\frac{pq+9}{p+q}=p+q+r
%\end{align*}
%
%
% Since $p \equiv q \mod 3$, there is a character $\chi: H_1(\Sigma_2(K)) \to \mathbb{Z}_3$ with $\chi(\mu_p)=\chi(\mu_q)=1$, and hence $\chi(\mu_r)=1$ and $\chi(\mu_0)=\epsilon \in \{1,2\}$. As in Proposition~\ref{fchiprop}, we have
%\begin{align*}
%\sigma_1(\Sigma_2(K),\chi)&= 3- \frac{2}{9}
%\left(9+3\epsilon + 2p+2q+2r
%\right)\\
%&= 1-\frac{2\epsilon}{3} -\frac{4}{9} (p+q+r) 
%< 1-\frac{2}{3}-\frac{4}{9}(3)=-1
%\end{align*}
%and so $K$ is not CG-slice at $d=3$. 
\end{proof}

\section{Topological sliceness of even 3-strand pretzel knots.}

We now outline the proof of our argument that all topologically slice even 3-strand pretzel knots are either ribbon or in Lecuona's family $\{\pm P_a\}$, leaving the details of arithmetic to the reader. 

\begin{thm}
Let $K$ be an even 3-strand pretzel knot. 
Suppose that $K$ is topologically slice. 
Then, up to reflection, either $K=P(p,-p,q)$ for some $p,q \in \mathbb{N}$ (and $K$ is ribbon) or $K=P_a=P\left(a, -a-2, -\frac{(a+1)^2}{2}\right)$ for some $a \equiv 1, 11, 37, 47, 59 \mod 60$. 
\end{thm}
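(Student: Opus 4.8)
The plan is to mimic the proof of Theorem~\ref{maintheorem}, organizing everything around the odd perfect square $D^2=|pq+qr+rp|=|H_1(\Sigma_2(K))|$ and recycling the Casson--Gordon computations of Section~\ref{thework} in the range $D>1$. \textbf{Reduction.} Topological sliceness gives algebraic sliceness, so $\sigma(K)=0$ and $D^2$ is an odd perfect square (the determinant of an even pretzel is odd). As in the odd case, the pretzel symmetries and a reflection normalize the signs of $p,q,r$, vanishing of the signature forbids the three parameters from sharing a sign, and the ribbon locus is exactly $P(p,-p,q)$ --- one matched odd pair $\{p,-p\}$ together with the single even parameter $q$. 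I therefore assume $K$ is not of this form and try to obstruct its topological sliceness; the reverse implication (ribbon $\Rightarrow$ slice) is immediate.

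\textbf{The case $D>1$.} Here $\Sigma_2(K)$ carries the surgery presentation of Figure~\ref{surgerydiagram} with the same linking matrix $A$ and $|H_1(\Sigma_2(K))|=D^2$, and --- crucially --- none of the machinery of Section~\ref{thework} sees the parity of $p,q,r$: the signature formulas of Propositions~\ref{computation1},~\ref{coloredlinks} and~\ref{fchiprop}, the divisibility Lemma~\ref{divisibilitylemma}, and the divisibility-by-four Lemma~\ref{evenintegerlemma} (which only uses that the relevant prime $d\mid D$ is odd) all hold verbatim. I run the identical split on a prime $d\mid D$ according to how $d$ meets $\{p,q,r\}$, invoking the even analogues of Propositions~\ref{dividespq},~\ref{ddividespr},~\ref{dividespqr},~\ref{generaldneqdividepqr},~\ref{exceptionalprop},~\ref{chipischiqprop} and~\ref{powerof3}. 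The one real change is the terminal arithmetic: several odd-case branches closed by deducing that some parameter is even, which was absurd there but is now permissible since exactly one parameter is even. Each such branch must instead be pushed further until it yields the ribbon relation $r=-q$ (against our assumption) or a bare numerical impossibility forced by $D>1$. Re-running these congruences while tracking which parameter carries the even factor is the arithmetic deferred to the reader.

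\textbf{The case $D=1$.} Now $\Sigma_2(K)$ is a homology sphere, there are no nontrivial characters to any $\mathbb{Z}_d$, and the double-cover obstruction is vacuous. Algebraic sliceness forces the rational Witt class to vanish, so by Jabuka's classification~\cite{Jab10} we have $K=\pm P_a$. To cut this family down I change tools and apply the classical Fox--Milnor obstruction: computing $\Delta_{P_a}$ and asking whether $\Delta_{P_a}(t)\doteq f(t)f(t^{-1})$ reduces to an elementary congruence condition on $a$, which turns out to be governed by $a \bmod 60$. For every residue outside $\{1,11,37,47,59\}$ --- and in particular for $a\equiv 49$, on which the smooth Donaldson argument of~\cite{Lec13} gave no purchase --- the factorization is impossible, so $P_a$ is not even algebraically slice and is excluded; on the five surviving residues the test is inconclusive, and eliminating them (as Lecuona conjectures one can) would need a finer obstruction.

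\textbf{The main obstacle.} The genuine difficulty is precisely this determinant-one family: once $\Sigma_2(P_a)$ is a homology sphere the entire Casson--Gordon apparatus driving the $D>1$ case evaporates, so I must abandon it for a separate, purely algebraic computation. Extracting the exact residue list from the Fox--Milnor congruence --- in particular eliminating $a\equiv 49$, where the smooth theory of~\cite{Lec13} gave no information --- is the delicate point, and even then the residues $1,11,37,47,59$ remain genuinely undecided, which is the content of Lecuona's conjecture. A lesser, bookkeeping hazard is checking that every terminal identity in the $D>1$ analysis really does reduce either to the ribbon relation or to an outright contradiction, with no non-ribbon configuration slipping through.
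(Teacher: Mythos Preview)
Your treatment of the $D=1$ case matches the paper's: identify the determinant-one knots with Lecuona's $\{\pm P_a\}$ via Jabuka, invoke \cite{Lec13} for residues outside $\{1,11,37,47,49,59\}$, and kill $a\equiv 49$ by a Fox--Milnor argument.

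For $D>1$, however, the paper takes a different and more efficient route. Rather than re-running the seven-way case split of Section~\ref{thework}, it applies Jabuka's Witt-class classification \emph{at the outset}, not just when $D=1$: an algebraically slice even pretzel is either $P(p,-p,q)$ or (up to reflection) of the restricted shape $P(-p,p+2,q)$ with $p,q>0$. This reduction collapses the parameter space dramatically. The paper then builds a \emph{two-component} surgery presentation for $\Sigma_2(K)$ (link $-T(2,2p)$, linking matrix $\left[\begin{smallmatrix}2 & -p\\ -p & q-p\end{smallmatrix}\right]$), observes that $H_1(\Sigma_2(K))$ is already cyclic, and runs a six-way split on the residues of $-p$, $p+2$, $q$ modulo $d$---not the odd-case split.

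Your plan to recycle Section~\ref{thework} verbatim is riskier than you suggest. The parity of $p,q,r$ is not used only in ``terminal arithmetic'': in Proposition~\ref{generaldneqdividepqr} the parity argument (matching the parities of the two sides of equation~(4)) sits in the \emph{middle} of the proof and is what allows the simplification from equations~(3),(4) to (5),(6). With one even parameter that step breaks and the downstream equations change shape, so the case analysis must be redone, not merely patched at its endpoints. Likewise Case~2 of Proposition~\ref{dividespqr} closes by forcing a parameter to be even. Your approach could in principle be pushed through, but the bookkeeping is substantially heavier than ``re-running these congruences,'' and the paper avoids it entirely by invoking Jabuka's reduction first.
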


\begin{proof}
Suppose that $K$ is an algebraically slice even 3-strand pretzel. First, note that by Jabuka's computation of the rational Witt classes of pretzel knots, we can assume that either $K=P(p,-p,q)$ for some odd $p$ and even $q$ or that $K=P(-p,p\pm2, q)$ for some odd $p$ and even $q$ such that $\det(K)= \pm2q - p^2 \mp 2p=m^2>0$ (Theorem 1.11 of \cite{Jab10}). In the first case $K$ is ribbon, and so we assume that we are in the second case. By the symmetries of 3-strand pretzel knots, we can also assume that up to reflection $K= P(-p,p+2,q)$ for $p\in \mathbb{N}$. Then our condition that $\det(K)=2q-p^2-2p>0$ implies that $q>0$ as well. 

First, observe that if $\det(K)=1$ then $q= \frac{(p+1)^2}{2}$ and $K$ is an element of Lecuona's family $\{P_a\}$. For $a \not \equiv 1, 11, 37,47,49,59 \mod 60$, Theorem~4.5 of \cite{Lec13} states that $K$ is not algebraically slice. When $a \equiv 49 \mod 60$, an argument analogous to the proof of in Theorem~4.5 of \cite{Lec13} shows that $\Delta_K(t)$ does not have a Fox-Milnor factorization and hence that $K$ is not algebraically slice.\footnote{
In particular, note that since $a  \equiv 49 \mod 60$ we have that $5$ divides $\frac{(a+1)^2}{4}$ 
and $3$ divides $a+2$. Working mod $5$, $\Delta_{P_a}(t)\equiv \Pi_{1 \neq d | a} \Phi_{d}(t) \Pi_{1 \neq d|a+2} \Phi_{d}(t)$, where $\Phi_{d}(t)$ denotes the $d^{th}$ cyclotomic polynomial. Since $\Phi_3(t)$ is symmetric, irreducible mod $5$, and relatively prime to each $\Phi_d(t)$
for $d \neq 3$ dividing $a$ or $a+2$, the desired result follows.}

So we can assume that $\det(K)=m^2>1$, and in particular that there is an (odd) prime $d$ dividing $\det(K)$. Arguments as in the proof of Proposition~\ref{dividespq} show that $\Sigma_2(K)$ has a surgery presentation with link the coherently oriented torus link $-T(2,2p)$ and linking matrix $\left[ \begin{array}{cc} 2 & -p \\ -p & q-p \end{array} \right]$. It follows that $H_1(\Sigma_2(K))$ is cyclic, and hence that $H_1(\Sigma_2(K), \mathbb{Z}_d)$ is certainly cyclic as well.
It therefore suffices to show that there is a single $\chi:H_1(\Sigma_2(K)) \to \mathbb{Z}_d$ with $|\sigma_k(\Sigma_2(K),\chi)|>1$ for some $1\leq k<d$. 

The construction of $\chi$ and computation of the corresponding Casson-Gordon signatures is extremely similar to the arguments of Section~\ref{thework}, and therefore we only enumerate the cases one must consider, and leave the verification of the details to the interested reader.
It is convenient to consider six cases, according to the values of $K$'s parameters mod $d$: $-p \equiv q \equiv 0 $; $p+2 \equiv q \equiv 0$;  
$-p \equiv 2q \not \equiv 0 $; $p+2 \equiv 2q \not \equiv 0 $; $-p \equiv p+2 \not \equiv 0$; and $-p, p+2$, and $q$ are mutually distinct and nonzero.
\end{proof}
 
\section*{Acknowledgements}
I would like to thank Tye Lidman for suggesting Corollary~\ref{subgroup}, Lisa Piccirillo for her help with manipulating surgery descriptions, and my advisor Cameron Gordon for his advice, support, and very helpful comments.

\bibliography{cgpretzel}

\end{document}